\theoremstyle{plain}
\newtheorem{theorem}{Theorem}[section]
\newtheorem{coro}{Corollary}[section]
\newtheorem{lemma}{Lemma}[section]
\newtheorem{propo}{Proposition}[section]
\theoremstyle{definition}
\newtheorem{defi}{Definition}[section]
\newtheorem{remark}{Remark}[section]
\newcommand{\R}{\mathbb{R}}
\newcommand{\E}{\mathbb{E}}
\newcommand{\Var}{\operatorname{Var}}
\newcommand{\PP}{\mathbb{P}}
\title{Limit theorems for the sample autocovariance of a continuous-time moving average process with long memory\footnote{Supported by Deutsche Forschungsgemeinschaft Grant LI 1026/4-2}}
\author{
Felix Spangenberg\thanks{Institut f\"ur Mathematische Stochastik, TU
Braunschweig,  Pockelsstra{\ss}e 14, D-38106 Braunschweig, Germany
\texttt{f.spangenberg@tu-bs.de}}}
\begin{document}
\maketitle

\begin{abstract}We examine the asymptotic behaviour of the sample autocovariance in a continuous-time moving average model with long-range dependence. We show that it is either asymptotically Rosenblatt distributed or stable distributed. This shows that results by Horváth and Kokoszka \cite{HK} for discrete-time moving average processes with long memory also hold for continuous-time moving average processes.
\end{abstract}

\section{Introduction}
Let $(Z_t)_{t \in \mathbb{Z}}$ be an i.i.d. sequence of real random variables with $\E[Z_0]=0$ and $\E[Z_0^2]=\sigma^2<\infty$. Let $(\psi_j)_{j \in \mathbb{N}_0}$ be a square summable real sequence. Then a one-sided discrete-time moving average process of infinite order $(X_t)_{t \in \mathbb{Z}}$ is defined by
\[X_t := \sum_{j = 0}^{\infty} \psi_{j} Z_{t-j}, \quad t \in \mathbb{Z},\]
where the limit exists in the $L^2$-sense and here as a sum of independent summands almost surely as well.
The autocovariances of the process are given by
\[\gamma(h):= \operatorname{Cov}(X_t,X_{t+h})= \sigma^2 \sum_{j =0}^{\infty} \psi_{j} \psi_{j+|h|}, \quad h \in \mathbb{Z}.\]
A canonical estimator for the autocovariances is given by
\[\hat{\gamma}_N(h):= \frac{1}{N} \sum_{t=1}^{N} X_t X_{t+|h|}, \quad h \in \mathbb{Z}.\]
Horváth and Kokoszka $\cite{HK}$ examined the asymptotic behaviour of this estimator under the assumption that $\psi_{j} = j^{d-1} l(j)$, with $d \in (0,\frac{1}{2})$, $l(j) \to C_d>0$ as $j \to \infty$ and that either $\E[Z_0^4]<\infty$ or that $Z_0$ is regularly varying with index $\alpha \in (2,4)$.
In the case $d \in (0,\frac{1}{4})$, they showed that the estimator is asymptotically normal distributed. In the case with $d \in (\frac{1}{4},\frac{1}{2})$ and finite fourth moments, the estimator is asymptotically Rosenblatt distributed and in the case with regularly varying noise, the estimator is asymptotically Rosenblatt distributed when $d>\frac{1}{\alpha}$ and asymptotically stable distributed when $d<\frac{1}{\alpha}$.

Let $(L_t)_{t \in \mathbb{R}}$ be a two-sided Lévy process with $\E[L_1]=0$ and $\E[L_1^2]=\sigma^2 \in (0,\infty)$. Let $f$ be a real-valued function in $L^2(\R)$. Then  a continuous-time moving average process $(X_t)_{t \in \R}$ is defined by
\begin{equation} X_t := \int_{-\infty}^{\infty} f(t-s) \, dL_s, \quad t \in \R, \label{CMAdefinition}\end{equation}
where the integral is defined in the $L^2$-sense of stochastic integrals. This process is easily seen to be strictly and weakly stationary. The autocovariance can be easily seen by Itô's isometry to be
\[\gamma(h)= \sigma^2 \int_{-\infty}^{\infty} f(s)f(s+h) \, ds, \quad h \in \R.\]
Cohen and Lindner \cite{CohenLindner} investigated the asymptotic behaviour of the sample mean, the sample autocovariance and the sample autocorrelation for continuous-time moving average processes under certain conditions. Among other things, they showed that the sample autocovariance is asymptotically normal distributed in their case.

The aim of this article is to derive the asymptotic behaviour of the sample autocovariance in the continuous-time case under assumptions similar to those in the article by Horváth and Kokoszka \cite{HK}. To this end, we assume that $f(t)=0$ for $t \leq 0$, $f$ is bounded and $f(t) \sim C_d t^{d-1}$ as $t \to \infty$ with $d \in (0,\frac{1}{2})$ and $C_d>0$. 

A class of discrete-time processes that are of the above form is given by ARFIMA processes, see for example Chapter 7 in Giraitis et al. \cite{GKS}. A class of continuous-time processes of the above form is given by FICARMA processes, which go back to Peter Brockwell, see \cite{Brockwell} and \cite{BrockwellMarquardt}.

In our article, we stick to the notation of Horváth and Kokoszka \cite{HK}.  They define the Rosenblatt process $(U_d(t))_{t \in \R}$ by \begin{equation} U_d(t) := 2 \int_{x_1 < x_2 < t} [ \int_0^t (v-x_1)_+^{d-1} (v-x_2)_+^{d-1} \, dv] \, W(dx_1)W(dx_2), \quad t \in \R, \label{Rosenblattprocess} \end{equation}
where $W$ is a standard Gaussian random measure on $\R$, i.e. standard Brownian motion.
Note that this definition depends on $d$. We call the distribution of $U_d(1)$ the Rosenblatt distribution.  For an introduction to multiple Wiener integrals, see Chapter 14 in \cite{GKS}.

We show in the second section under the assumption that $\E[L_1^4]<\infty$ and $d \in (\frac{1}{4},\frac{1}{2})$, that the sample autocovariance is asymptotically Rosenblatt distributed. This is in contrast to the case when $\E[L_1^4]<\infty$ and $d \in (0,\frac{1}{4})$ in which the sample autocovariance function is asymptotically normal distributed, as follows easily from results of Cohen and Lindner \cite{CohenLindner} and is shortly discussed in section 4. We show in section 3 under the assumption that $L_1$ is regularly varying with index $\alpha \in (2,4)$, that it is asymptotically either stable or Rosenblatt distributed if $d \in (0,\frac{1}{2}) \setminus \{\frac{1}{\alpha}\}$. In the case with regularly varying $L_1$, we have to restrict ourselves to symmetric Lévy processes, but we believe that this assumption is not too severe as we already assumed that its expectation vanishes. We need the symmetry in Remark \ref{replacingcentringsequence}. In section 4, we further discuss briefly that FICARMA processes satisfy the assumptions on the kernel function and that our results can also be applied to calculate the asymptotics of the sample autocorrelation and the asymptotics for an estimator for $d$ for fractional Lévy noise.

We conclude the introduction with some (notational) remarks. From the assumptions on $f$, we can conclude that there is a constant $K>0$ such that
\begin{equation} |f(t)|\leq K \max(1,t^{d-1}), \label{basicinequalitywithK}\end{equation}
which we use throughout the article. Further, we assume that we are given a probability space $(\Omega,\mathcal{A}, \PP)$. By $\E$ and $\Var$, we denote the expectation and variance with respect to $\PP$. By $L^1(\PP)$ and $L^2(\PP)$, we denote the Banach spaces of integrable and square integrable random variables, by $L^2(\R^d)$, we denote the space of square integrable functions with respect to the Lebesgue measure. By $\left\lfloor x \right\rfloor$ for $x \in \R$, we denote the largest integer that is not larger than $x$. We define $x_+^{d-1}:= x^{d-1}1_{(0,\infty)}(x)$ for $x \in \R$. By $g$, $G$, $u$, $v$ and $w$ we will denote auxiliary functions. We call a series unconditionally convergent, if its limit does not depend on the order of summation. By $h \in \mathbb{N}_0$, we denote the lag of the autocovariance function. Since the autocovariance function is symmetric, it suffices to assume $h \in \mathbb{N}_0$. We set $\varepsilon := \frac{1}{m}$ with $m \in \mathbb{N}$. Finally, when we use $o(N^d)$ and $O(N^d)$, we mean the asymptotics as $N \to \infty$.

\section{Theorem for finite fourth moments}
In this section we assume that the Lévy process $(L_t)_{t \in \R}$ has finite fourth moments and that $d \in (\frac{1}{4},\frac{1}{2})$.
Define the sample and the actual autocovariance function of the process $(X_t)_{t \in \R}$ defined in (\ref{CMAdefinition}) by \[\hat{\gamma}_N(h):=\frac{1}{N} \sum_{t=1}^{N} X_{t}X_{t+h}, \quad h \in \mathbb{N}_0\] and \[\gamma(h):= \operatorname{Cov}(X_t,X_{t+h}) =\sigma^2 \int_{-\infty}^{\infty} f(t)f(t+h) \, dt, \quad h \in \mathbb{N}_0.\]  We show that the sample autocovariance is then asymptotically Rosenblatt distributed. This is the statement of the following theorem, which parallels Theorem 3.3 (b) in \cite{HK}.

\begin{theorem}\label{Theoremfourthmoment}
Let $(L_t)_{t \in \mathbb{R}}$ be a two-sided Lévy process with $\E[L_1]=0$, $\Var(L_1)=\sigma^2 \in (0,\infty)$ and $\E[L_1^4] <\infty$. Define a continuous-time moving average process $(X_t)_{t \in \R}$ by \[X_t:=\int_{-\infty}^{\infty} f(t-s)  \, dL_s, \quad t \in \R,\] where we assume that $f(t)=0$ for $t \leq 0$, $f$ is bounded and $f(t) \sim C_d t^{d-1}$ as $t \to \infty$ with $d \in (\frac{1}{4},\frac{1}{2})$ and $C_d>0$.
Then
\[N^{1-2d} (\hat{\gamma}_N(0)-\gamma(0),\ldots, \hat{\gamma}_N(H)-\gamma(H)) \stackrel{d}{\rightarrow} C_d^2 \sigma^2 U_d(1) (1,\ldots,1)\mbox{ as } N \to \infty,\]
where $U_d(1)$ is the marginal distribution of the Rosenblatt process at time $1$ defined in (\ref{Rosenblattprocess}).
\end{theorem}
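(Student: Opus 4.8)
The plan is to center the statistic, expand each product $X_tX_{t+h}$ via the multiplication rule for stochastic integrals against $L$ into an off‑diagonal double integral plus a diagonal term, show that only the double‑integral part survives the normalisation $N^{1-2d}$, and then identify its limit by rescaling the driving Lévy process so that the surviving term becomes a \emph{fixed} double integral against a process converging weakly to Brownian motion. Concretely, since $\E[L_1^4]<\infty$ we have $X_tX_{t+h}\in L^2(\PP)$, and the product formula for $X_t=\int f(t-s)\,dL_s$ and $X_{t+h}=\int f(t+h-s)\,dL_s$ gives $X_tX_{t+h}-\gamma(h)=Q_{t,h}+R_{t,h}$, where $Q_{t,h}:=\int\!\int_{s_1\neq s_2}f(t-s_1)f(t+h-s_2)\,dL_{s_1}\,dL_{s_2}$ is the (centered) off‑diagonal double integral and $R_{t,h}$ is the centered diagonal term $\int_\R f(t-s)f(t+h-s)\,d\bigl([L]_s-\sigma^2 s\bigr)$; the deterministic part $\sigma^2\int_\R f(t-s)f(t+h-s)\,ds$ of the diagonal is exactly $\gamma(h)$. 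Hence $N^{1-2d}(\hat\gamma_N(h)-\gamma(h))=N^{-2d}\sum_{t=1}^N Q_{t,h}+N^{-2d}\sum_{t=1}^N R_{t,h}$, and the two sums are handled separately.

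For the diagonal term, stationarity in $t$ together with the assumptions on $f$ (boundedness, $f(t)=0$ for $t\le 0$, and a power bound $O(t^{d-1})$ at infinity coming from $f(t)\sim C_d t^{d-1}$) gives $|\operatorname{Cov}(R_{t,h},R_{t',h})|=O(|t-t'|^{2d-2})$ as $|t-t'|\to\infty$, so, since $d<\tfrac12$, $\sum_k|k|^{2d-2}<\infty$ and $\operatorname{Var}\bigl(\tfrac1N\sum_{t=1}^N R_{t,h}\bigr)=O(N^{-1})$; therefore $\operatorname{Var}\bigl(N^{-2d}\sum_{t=1}^N R_{t,h}\bigr)=O(N^{1-4d})\to 0$, and this is where the hypothesis $d>\tfrac14$ enters. (At $d=\tfrac14$ the diagonal contributes at the same order and injects a Gaussian component, matching the $d<\tfrac14$ regime mentioned via \cite{CohenLindner}.)

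For the surviving term, write $\sum_{t=1}^N Q_{t,h}=\int\!\int_{s_1\neq s_2}g_N^{(h)}(s_1,s_2)\,dL_{s_1}\,dL_{s_2}$ with $g_N^{(h)}(s_1,s_2):=\sum_{t=1}^N f(t-s_1)f(t+h-s_2)$. The double‑integral isometry bounds the variance of such an object by $2\sigma^4\|g_N^{(h)}\|_{L^2(\R^2)}^2$, so it is enough to replace $g_N^{(h)}$, after multiplying by $N^{-2d}$, by any kernel close to it in $L^2(\R^2)$; I would prove the analytic estimate
\[
N^{-2d}\bigl\|g_N^{(h)}-\tilde g_N\bigr\|_{L^2(\R^2)}\longrightarrow 0,\qquad \tilde g_N(s_1,s_2):=C_d^2\int_0^N (u-s_1)_+^{d-1}(u-s_2)_+^{d-1}\,du,
\]
with an $h$‑\emph{independent} limit kernel, by comparing the sum with an integral and $f$ with its tail $C_d t_+^{d-1}$; in this estimate the ``edge'' region, where $f$ is only known to be bounded, contributes $O(N)=o(N^{2d})$ again because $d>\tfrac14$, and the shift by $h$ is of lower order. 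Substituting $s_i=Nx_i$, $u=Nv$ and setting $W_N(x):=N^{-1/2}L_{Nx}$ one then obtains the exact identity
\[
N^{-2d}\int\!\int_{s_1\neq s_2}\tilde g_N(s_1,s_2)\,dL_{s_1}\,dL_{s_2}=C_d^2\int\!\int_{x_1\neq x_2}G(x_1,x_2)\,dW_N(x_1)\,dW_N(x_2),\qquad G(x_1,x_2):=\int_0^1 (v-x_1)_+^{d-1}(v-x_2)_+^{d-1}\,dv,
\]
where $G$ vanishes unless $x_1,x_2<1$.

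The final, and I expect the hardest, step is the passage to the limit in this last double integral. By the invariance principle for Lévy processes $W_N\Rightarrow\sigma W$ in the Skorokhod topology, and $[W_N]_t=N^{-1}[L]_{Nt}\to\sigma^2 t$; the main obstacle is to upgrade this to
\[
\int\!\int_{x_1\neq x_2}G\,dW_N\,dW_N\ \stackrel{d}{\longrightarrow}\ \int\!\int_{x_1\neq x_2}G\,d(\sigma W)\,d(\sigma W)=\sigma^2 U_d(1),
\]
the last equality being the definition (\ref{Rosenblattprocess}) of $U_d(1)$, the factor $2$ there encoding the symmetrisation of the off‑diagonal domain. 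Weak convergence of the integrator does not by itself pass to such a quadratic functional, so I would argue by approximating $G$ in $L^2(\R^2)$ by bounded continuous kernels vanishing in a neighbourhood of the diagonal — for which the double integral is, via a Riemann‑sum representation, a continuous functional of the pair $(W_N,[W_N])$ — while controlling the approximation error uniformly in $N$ through the isometry bound $\operatorname{Var}(\int\!\int r\,dW_N\,dW_N)\le 2\sigma^4\|r\|_{L^2(\R^2)}^2$; equivalently one may discretise the double integral into a quadratic form in the i.i.d.\ increments $L_k-L_{k-1}$ with a Rosenblatt‑type kernel and invoke the classical limit theorem used in \cite{HK}. Combining all of this, $N^{-2d}\sum_{t=1}^N Q_{t,h}\stackrel{d}{\to}C_d^2\sigma^2 U_d(1)$ for each $h\in\{0,\dots,H\}$ with one and the same limiting variable (the approximating kernel $\tilde g_N$ is independent of $h$, whence $N^{-2d}\sum_{t=1}^N(Q_{t,h}-Q_{t,0})\to 0$ in $L^2(\PP)$), while the diagonal parts vanish in probability; thus the vector on the left equals $(Y_N,\dots,Y_N)+o_P(1)$ with $Y_N:=N^{1-2d}(\hat\gamma_N(0)-\gamma(0))\stackrel{d}{\to}C_d^2\sigma^2 U_d(1)$, and Slutsky's lemma yields the asserted joint convergence to $C_d^2\sigma^2 U_d(1)(1,\dots,1)$.
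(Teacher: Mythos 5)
Your proposal is correct in outline and reaches the theorem by a genuinely different organisation of the argument than the paper. The paper first replaces $f$ by the step function $f_m$ (only $\varepsilon=1$ is needed in Section 2), splits $X_t^{(m)}X_{t+h}^{(m)}$ into diagonal and off-diagonal terms with respect to the unit increments $Z_k$, sends the discrete off-diagonal quadratic form to the Rosenblatt law via Propositions 14.3.2--14.3.3 of Giraitis et al.\ (the analytic content being the $L^2(\R^2)$ convergence $\tilde g_N\to g$ of the unnumbered lemma following Lemma \ref{nondiagonalparttoRosenblatt}), and then spends two further lemmas (Lemmas \ref{asymptoticrn} and \ref{asymptoticdn}) showing that the discretisation error $\bar r_{N,h,\varepsilon}-r_{N,h,\varepsilon}$ and the diagonal part $\bar d_{N,h,\varepsilon}-\gamma(h)$ are $o_P(N^{2d-1})$. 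You instead stay in continuous time: the It\^o product formula yields an off-diagonal double integral $Q_{t,h}$ against $L$ plus a diagonal term $R_{t,h}$ driven by the compensated quadratic variation $[L]_s-\sigma^2 s$; the diagonal is killed by the same $O(N)$ variance bound (this is exactly where $\E[L_1^4]<\infty$ and $d>\frac14$ enter, as in Lemma \ref{asymptoticdn}); and your single kernel comparison $N^{-2d}\|g_N^{(h)}-\tilde g_N\|_{L^2(\R^2)}\to0$ absorbs what the paper distributes over the kernel-convergence lemma and Lemma \ref{asymptoticrn}. What your route buys is the elimination of the step-function approximation of $f$ and the $\varepsilon$-bookkeeping; what it costs is that you must define and control the off-diagonal double integral of an $L^2(\R^2)$ kernel against a two-sided L\'evy process (unconditional convergence, the isometry bound $2\sigma^4\|\cdot\|_{L^2(\R^2)}^2$, and the validity of the product formula), which is the continuous analogue of the paper's Lemma \ref{technicallemma1} and needs the same care. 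The final limit passage is, as you note, the crux; your fallback --- discretising $\int\!\int G\,dW_N\,dW_N$ into an off-diagonal quadratic form in the increments $L_k-L_{k-1}$ and invoking the classical theorem --- is precisely what the paper does, while your primary route (elementary kernels supported off the diagonal, multivariate CLT for the increments of $W_N$, uniform-in-$N$ isometry control of the remainder, Billingsley's triangular argument) is the standard proof of that theorem and also works, though I would not phrase it as Skorokhod-continuity of a functional of $(W_N,[W_N])$: for kernels vanishing near the diagonal the quadratic variation plays no role and finite-dimensional convergence suffices. One slip to fix: the edge region contributes $O(N)$ to $\|g_N^{(h)}-\tilde g_N\|_{L^2(\R^2)}^2$, which is $o(N^{4d})$ because $d>\frac14$, not $o(N^{2d})$ as written; your stated justification shows this is what you intended.
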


The rest of this section is devoted to the proof of Theorem \ref{Theoremfourthmoment}.
Our proof consists of two parts. In the first part, we approximate $f$ by
\[f_m:= \sum_{k=0}^{\infty} f(\varepsilon k) 1_{[k\varepsilon,(k+1)\varepsilon)}, \label{fm}\]
 where $m \in \mathbb{N}$ and $\varepsilon:= \frac{1}{m}$. We then show that the \textit{non-diagonal terms} of the sample autocovariance function which we denote by $r_{N,h,\varepsilon}$ and are defined in (\ref{definitionrn}) converge towards the Rosenblatt distribution, which is the statement of Lemma \ref{nondiagonalparttoRosenblatt}. We then use Slutsky's Lemma in the second part to show the asymptotics of the sample autocovariance for general kernel functions.

Define
\begin{equation}
Z_i:=L_{\varepsilon i}-L_{\varepsilon(i-1)}, \quad i \in \mathbb{Z}. \label{increments}
\end{equation} Observe that $\E[Z_i]=0$ and $\E[Z_i^2]=\varepsilon \sigma^2$.  Define
\[X_t^{(m)}:=\int_{-\infty}^{\infty} f_m(t-s) \, dL_s = \sum_{k=0}^{\infty} f(\varepsilon k) (L_{t-\varepsilon k}-L_{t-\varepsilon(k+1)})=\sum_{k=0}^{\infty} f(\varepsilon k) Z_{mt-k}, \quad t \in \mathbb{Z},\]
where the two series converge as series of independent, hence orthogonal elements in the Hilbert space $L^2(\PP)$, unconditionally in $L^2(\PP)$.

We divide the summands of the sample autocovariance in diagonal terms and non-diagonal terms with respect to  products of $(Z_t)_{t \in \mathbb{Z}}$. To this end, we need the following technical lemma:

\begin{lemma}\label{technicallemma1}
Let $A,B$ and $C$ be random variables. Let $(A_u)_{u \in \mathbb{Z}}$ and $(B_v)_{v \in \mathbb{Z}}$ be sequences of random variables such that $A = \sum_{u \in \mathbb{Z}} A_u$ and $B = \sum_{v \in \mathbb{Z}}B_v$ as unconditional $L^2(\PP)$ limits. Assume further that $C = \sum_{u,v \in \mathbb{Z}} A_u B_v$ as an unconditional $L^1(\PP)$ limit. Then $AB=C$ almost surely.
\end{lemma}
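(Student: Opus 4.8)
The plan is to exploit that multiplication is continuous from $L^2(\PP)\times L^2(\PP)$ into $L^1(\PP)$ and that both notions of convergence in the hypotheses are compatible with taking nested finite partial sums. For $n \in \N$ set $A^{(n)} := \sum_{|u|\le n} A_u$ and $B^{(n)} := \sum_{|v|\le n} B_v$. By assumption $A^{(n)} \to A$ and $B^{(n)} \to B$ in $L^2(\PP)$, so in particular $\sup_n \|A^{(n)}\|_{L^2(\PP)} < \infty$ and $A, B \in L^2(\PP)$.

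First I would show that $A^{(n)} B^{(n)} \to AB$ in $L^1(\PP)$. Writing $A^{(n)}B^{(n)} - AB = A^{(n)}(B^{(n)}-B) + (A^{(n)}-A)B$ and applying the Cauchy--Schwarz inequality,
\[
\|A^{(n)} B^{(n)} - AB\|_{L^1(\PP)} \le \|A^{(n)}\|_{L^2(\PP)}\,\|B^{(n)}-B\|_{L^2(\PP)} + \|A^{(n)}-A\|_{L^2(\PP)}\,\|B\|_{L^2(\PP)},
\]
and both summands tend to $0$ as $n\to\infty$ since $\|A^{(n)}\|_{L^2(\PP)}$ stays bounded. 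In particular $AB \in L^1(\PP)$ and $A^{(n)}B^{(n)} \to AB$ in $L^1(\PP)$.

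Next I would recognize the very same sequence as a sequence of finite partial sums of the double series. For each $n$ the finite set $F_n := \{(u,v)\in\Z^2 : |u|\le n,\ |v|\le n\}$ satisfies $A^{(n)}B^{(n)} = \sum_{(u,v)\in F_n} A_u B_v$, and the $F_n$ are increasing with $\bigcup_n F_n = \Z^2$. Since $C = \sum_{(u,v)\in\Z^2} A_u B_v$ is an unconditional $L^1(\PP)$ limit, the net of finite partial sums converges to $C$ in $L^1(\PP)$; in particular the subsequence of partial sums indexed by $(F_n)_n$ converges, so $A^{(n)}B^{(n)} \to C$ in $L^1(\PP)$. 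Comparing with the previous step and using uniqueness of limits in $L^1(\PP)$ yields $AB = C$ in $L^1(\PP)$, hence $AB = C$ almost surely.

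The only delicate point — and the nearest thing to an obstacle — is the last step: one must be sure that unconditional convergence of the double series over $\Z^2$ (understood, as in the paper, as independence of the limit from the order of summation, equivalently convergence of the net of finite partial sums to a common value) does force convergence along the specific exhausting sequence $(F_n)_n$. This is immediate from the net formulation, so no genuine difficulty remains; the rest of the argument is a routine application of Cauchy--Schwarz and uniqueness of $L^1$-limits.
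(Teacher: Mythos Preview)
Your proof is correct and follows essentially the same approach as the paper: both arguments decompose the difference via $A^{(n)}B^{(n)}-AB = A^{(n)}(B^{(n)}-B) + (A^{(n)}-A)B$, apply Cauchy--Schwarz to pass from $L^2$-convergence of the factors to $L^1$-convergence of the product, and use unconditional convergence of the double series to identify the same partial sums with $C$. The paper packages this as a single $\delta$-estimate for $\|C-AB\|_1$, whereas you separate the two limits and invoke uniqueness; the content is the same.
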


\begin{proof}
Let $\delta>0$. By our assumptions, we find an $N \in \mathbb{N}$ such that $||A- \sum_{|u|\leq N} A_u||_2 < \delta$, $||B- \sum_{|v|\leq N} B_v||_2 < \delta$ and $||C- \sum_{|u|,|v|\leq N} A_u B_v||_1 < \delta$. We then obtain
\begin{eqnarray*}
||C-AB||_1 & \leq & ||C- \sum_{|u|,|v| \leq N} A_u B_v||_1 + ||\sum_{|u|,|v| \leq N} A_u B_v  - (\sum_{|u| \leq N} A_u) B||_1 + ||(\sum_{|u| \leq N} A_u) B - AB||_1\\
&\leq & \delta +  ||(\sum_{|u| \leq N} A_u) (\sum_{|v| \leq N}B_v  -  B)||_1 + ||(\sum_{|u| \leq N} A_u - A)B||_1\\
&\leq & \delta + ||\sum_{|u| \leq N} A_u||_2 ||\sum_{|v| \leq N}B_v  -  B||_2 + ||\sum_{|u| \leq N} A_u - A||_{2} ||B||_2\\
&\leq & \delta + (||A||_2 + \delta) \delta + \delta ||B||_2.
\end{eqnarray*}
\end{proof}

Now the following rearrangement is justified by Lemma \ref{technicallemma1}, once we have shown that the right-hand side converges unconditionally in $L^1(\PP)$. Let $h \in \mathbb{N}_0$. Then
\begin{eqnarray}
X_t^{(m)} X_{t+h}^{(m)} &=& (\sum_{i=0}^{\infty} f(\varepsilon i) Z_{mt-i}) (\sum_{j=0}^{\infty} f(\varepsilon j) Z_{m(t+h)-j}) \label{lefthandsidereaarrangement}\\
& = & \sum_{i=0}^{\infty} f(\varepsilon i)f(\varepsilon i+h) (Z_{mt-i})^2 \nonumber\\
&   & + \sum_{j \neq i + mh, i,j \in \mathbb{N}_0} f(\varepsilon i)f(\varepsilon j) Z_{mt-i}Z_{m(t+h)-j}. \label{righthandsidereaarrangement}
\end{eqnarray}
Observe that $f(x)$ vanishes for $x \leq 0$, hence the sum can also be taken over $i,j \in \mathbb{Z}$. Note that $\sum_{i=0}^{\infty} f(\varepsilon i)f(\varepsilon i+h) (Z_{mt-i})^2$ converges absolutely almost surely and unconditionally in $L^1(\PP)$ because $\sum_{i=0}^{\infty} f(\varepsilon i)f(\varepsilon i+h)$ is absolutely summable and $(Z_{mt-i})^2$ has finite expectation. 
Setting $k=mt-i$ and $k'=m(t+h)-j$, the last summand can be rewritten as
\[\sum_{k \neq k', k,k' \in \mathbb{Z}} f(t - \varepsilon k) f(t+h-\varepsilon k') Z_k Z_{k'}.\]
We decompose this series as
\begin{eqnarray}
&&\sum_{k \neq k', k,k' \in \mathbb{Z}} f(t - \varepsilon k) f(t+h-\varepsilon k') Z_k Z_{k'} \label{lefthandsideorthognalfamlies}\\
&=&\sum_{k < k'} f(t - \varepsilon k) f(t+h-\varepsilon k') Z_k Z_{k'}+\sum_{k > k'} f(t - \varepsilon k) f(t+h-\varepsilon k') Z_k Z_{k'}.\label{righthandsideorthognalfamlies}
\end{eqnarray}
Since $(Z_k)_{k \in \mathbb{Z}}$ is i.i.d. with expectation zero and finite variance, both $(Z_kZ_{k'})_{k<k'}$ and $(Z_kZ_{k'})_{k>k'}$ are families of orthogonal elements in $L^2(\PP)$  with constant variance and since $(f(t-\varepsilon k)f(t+h- \varepsilon k'))_{k \neq k'}$ is square summable by assumption, the two series in  (\ref{righthandsideorthognalfamlies})  converge unconditionally in $L^2(\PP)$ and thus in $L^1(\PP)$ as well, hence the series in (\ref{lefthandsideorthognalfamlies})  can be seen to converge unconditionally as well. Hence (\ref{righthandsidereaarrangement}) converges unconditionally in $L^1(\PP)$ and hence by Lemma \ref{technicallemma1} (\ref{lefthandsidereaarrangement}) and  (\ref{righthandsidereaarrangement}) are equal.
Now define
\begin{equation} \label{definitionrn} r_{N,h,\varepsilon} := \frac{1}{N} \sum_{t=1}^{N} \sum_{k \neq k'} f(t - \varepsilon k) f(t+h-\varepsilon k') Z_k Z_{k'},\quad h \in \mathbb{N}_0.\end{equation}
Thus $r_{N,h,\varepsilon}$ represents all non-diagonal terms of the sample autocovariance.

The following lemma is a generalisation of Lemma 5.5 in \cite{HK}. Note that our proof is somewhat easier as we refer to results in \cite{GKS}. In fact, we only need the result for the case $\varepsilon = 1$ in this section, which follows from Lemma 5.5 in \cite{HK}, but we need the result for general $\varepsilon>0$ in section 3 and we state it already here for convenience.
\begin{lemma} \label{nondiagonalparttoRosenblatt} Let all assumptions of Theorem \ref{Theoremfourthmoment} apart from $\E[L_1^4] <\infty$ be fulfilled. Let $H \in \mathbb{N}_0$. Then
\[N^{1-2d}(r_{N,0,\varepsilon},\ldots,r_{N,H,\varepsilon}) \stackrel{d}{\rightarrow} C_d^2 \sigma^2 U_d(1) (1,\ldots,1) \mbox{ as } N \to \infty.\]
\end{lemma}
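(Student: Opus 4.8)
The plan is to exhibit $N^{1-2d}r_{N,h,\varepsilon}$ as an off-diagonal quadratic form in the i.i.d.\ sequence $(Z_k)_{k\in\Z}$ of (\ref{increments}) and to recognise its appropriately rescaled kernel as a constant multiple of the kernel of the double Wiener--It\^o integral in (\ref{Rosenblattprocess}). Since the limiting vector $C_d^2\sigma^2U_d(1)(1,\dots,1)$ is degenerate, I would first reduce the joint statement to a one-dimensional one via the Cram\'er--Wold device: it suffices to show that for all $\lambda_0,\dots,\lambda_H\in\R$,
\[
Q_N:=\sum_{h=0}^H\lambda_h\,N^{1-2d}r_{N,h,\varepsilon}=\sum_{k\neq k'}c_N(k,k')\,Z_kZ_{k'}\ \stackrel{d}{\rightarrow}\ \Big(\sum_{h=0}^H\lambda_h\Big)C_d^2\sigma^2U_d(1),
\]
where, by (\ref{definitionrn}), $c_N(k,k'):=N^{-2d}\sum_{t=1}^N f(t-\varepsilon k)\sum_{h=0}^H\lambda_h f(t+h-\varepsilon k')$, which we may and do symmetrise in $(k,k')$. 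Because $(Z_k)$ is i.i.d.\ with $\E Z_k=0$, $\E Z_k^2=\varepsilon\sigma^2$, and because the diagonal $k=k'$ has been removed, the variance $\Var(Q_N)=2\varepsilon^2\sigma^4\sum_{k\neq k'}c_N(k,k')^2$ involves only second moments of $Z_k$ --- which is precisely why no assumption on $\E L_1^4$ enters.

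Next I would pass to the continuum scaling $x_i:=\varepsilon k_i/N$ and $v:=t/N$, so that the index block $I_k$ of $Z_k$ has length $\varepsilon/N$ in the variable $x$. Using $f(t)\sim C_d t^{d-1}$ together with the bound (\ref{basicinequalitywithK}) and Potter-type uniform estimates for regularly varying functions, and using that $f(t+h)/f(t)\to1$ as $t\to\infty$ for each fixed $h$, one gets that for $x_1<v$ and $x_2<v$,
\[
c_N\big(\lfloor Nx_1/\varepsilon\rfloor,\lfloor Nx_2/\varepsilon\rfloor\big)=C_d^2\,N^{-1}\Big(\sum_{h=0}^H\lambda_h\Big)\int_0^1(v-x_1)_+^{d-1}(v-x_2)_+^{d-1}\,dv\;\bigl(1+o(1)\bigr),
\]
the Riemann sum over $t$ converging to the $v$-integral in spite of the integrable singularity of $(v-x_i)_+^{d-1}$ at $v=x_i$, and the $h$-dependence washing out. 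I would then let $g_N$ be the step function on $\R^2$ whose value on $I_k\times I_{k'}$ is $N\sigma^2 c_N(k,k')$ (the factor $N\sigma^2$ matching $\Var(Z_k)=\varepsilon\sigma^2$ against the length $\varepsilon/N$ of $I_k$); with this normalisation $Q_N$ is the discrete analogue of the double Wiener--It\^o integral of $g_N$ in the sense of \cite{GKS}, and the display above shows $g_N\to g$ in $L^2(\R^2)$, where $g(x_1,x_2):=C_d^2\sigma^2\bigl(\sum_{h=0}^H\lambda_h\bigr)\int_0^1(v-x_1)_+^{d-1}(v-x_2)_+^{d-1}\,dv$. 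The finiteness $\|g\|_{L^2(\R^2)}<\infty$ and this $L^2$-convergence are exactly where $d\in(\tfrac14,\tfrac12)$ is used: $d>\tfrac14$ tames the singularity along $\{x_1=x_2=v\}$, while $d<\tfrac12$ makes the tails as $x_i\to-\infty$ square integrable.

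I would then invoke the limit theorem for degenerate quadratic forms of order two in i.i.d.\ finite-variance random variables converging to the second Wiener--It\^o integral, as provided in \cite{GKS} (this replaces the more hands-on computation of Lemma 5.5 in \cite{HK}): since $(Z_k)$ is i.i.d., the diagonal is absent, and $g_N\to g$ in $L^2(\R^2)$, it follows that $Q_N$ converges in distribution to $\int_{\R^2}g(x_1,x_2)\,W(dx_1)W(dx_2)$. As $g$ is symmetric and its inner $v$-integral vanishes for $x_2\ge1$, this double integral equals $2C_d^2\sigma^2\bigl(\sum_{h=0}^H\lambda_h\bigr)\int_{x_1<x_2<1}\bigl[\int_0^1(v-x_1)_+^{d-1}(v-x_2)_+^{d-1}\,dv\bigr]W(dx_1)W(dx_2)$, which by (\ref{Rosenblattprocess}) is $C_d^2\sigma^2\bigl(\sum_{h=0}^H\lambda_h\bigr)U_d(1)$. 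This is the asserted one-dimensional convergence, and the Cram\'er--Wold device finishes the proof.

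The symmetrisation, the Cram\'er--Wold reduction, and the cited convergence theorem for discrete chaos are routine. The main obstacle is the $L^2(\R^2)$-convergence $g_N\to g$. There one has to (i) replace $f$ by its asymptotic equivalent $C_d t^{d-1}$ with enough uniformity inside the rescaled triple sum --- splitting off the bounded ``pre-asymptotic'' part of $f$ and using Potter bounds; (ii) justify the passage from $N^{-1}\sum_{t=1}^N(\cdots)$ to $\int_0^1(\cdots)\,dv$ with uniform control near the set where $v=x_1$ or $v=x_2$; and (iii) produce a fixed $L^2(\R^2)$ majorant for the rescaled kernels, controlling both the tails $x_i\to-\infty$ and the near-diagonal region, so that dominated convergence applies --- the integrability of this majorant being, once more, exactly the constraint $d\in(\tfrac14,\tfrac12)$.
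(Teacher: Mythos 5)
Your proposal is correct and follows essentially the same route as the paper: both express $N^{1-2d}r_{N,h,\varepsilon}$ as an off-diagonal quadratic form in the increments $Z_k$, prove $L^2(\R^2)$-convergence of the rescaled step-function kernels to (a multiple of) $\int_0^1(v-x_1)_+^{d-1}(v-x_2)_+^{d-1}\,dv$, and invoke the discrete-chaos limit theorem of Propositions 14.3.2--14.3.3 in \cite{GKS}. The only differences are cosmetic --- you absorb $\varepsilon$ into the spatial rescaling so that $U_d(1)$ appears directly, whereas the paper keeps the lattice at scale $1/N$ and identifies $\varepsilon^{2d}U_d(\frac{1}{\varepsilon})\stackrel{d}{=}U_d(1)$ via self-similarity, and you add an explicit Cram\'er--Wold reduction --- while the $L^2$-convergence of the kernels, which you correctly single out as the main obstacle, is exactly what the paper's subsequent technical lemma establishes by the majorant-plus-splitting argument you sketch.
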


\begin{proof}
Without loss of generality, we can assume that $\sigma^2=1$ and $C_d=1$.

Define
\begin{equation} g(x_1,x_2):=\int_0^1 (t-\varepsilon x_1)_+^{d-1} (t-\varepsilon x_2)_+^{d-1} \, dt. \label{definitiong}\end{equation}
One can show that $g \in L^2(\R^2)$ in a similar fashion to equation (14.3.38) on page 544 in Giraitis et al. \cite{GKS}.

We will use Propositions 14.3.2 and 14.3.3 in Giraitis et al. \cite{GKS}.
To this end, we define
\begin{equation} g_N(k,k')= N^{-2d} \sum_{t=1}^{N} f(t-k\varepsilon) f(t+h-\varepsilon k') \label{definitiongN} \end{equation}
for $k \neq k'$ and $g_N(k,k)=0$ with $k,k' \in \mathbb{Z}$
and
\begin{equation} \tilde{g}_N(x_1,x_2) = N g_N(\left\lfloor N x_1 \right\rfloor, \left\lfloor N x_2 \right\rfloor) \label{definitiongNtilde}, \quad x_1,x_2 \in \R.\end{equation}
Then $\frac{Z_k}{\varepsilon}$ corresponds to $\zeta_k$ and $N^{1-2d} r_{N,h,\varepsilon}$
to $\varepsilon Q_2(g_N)$ in the notation of Proposition 14.3.2 in \cite{GKS}.
According to the next lemma, $\tilde{g}_N\stackrel{L^2(\R^2)}{\longrightarrow}g$.
Note that by substitution, it is easy to see that
\[\int_0^1 (v - \varepsilon x_1)_+^{d-1} (v  - \varepsilon x_2)_+^{d-1} \, dv = \varepsilon^{2d-1} \int_0^{\frac{1}{\varepsilon}} (v- x_1)_+^{d-1} (v - x_2)_+^{d-1} \, dv.\]
 Hence by Proposition 14.3.3 in \cite{GKS}, $N^{1-2d}(r_{N,0,\varepsilon},\ldots,r_{N,H,\varepsilon}) \stackrel{d}{\rightarrow}   \varepsilon^{2d}U_d({\frac{1}{\varepsilon}})(1,...,1)$ as $N \to \infty$.
The Rosenblatt process $(U_d(t))_{t \in \mathbb{R}}$ is self-similar with index $2d$, see \cite{GKS}, p. 544, Proposition 14.3.7, hence $\varepsilon^{2d}U_d({\frac{1}{\varepsilon}})\stackrel{d}{=} U_d(1)$.
\end{proof}
The following lemma was needed in the proof of Lemma \ref{nondiagonalparttoRosenblatt}:
\begin{lemma}
With the definitions in equations (\ref{definitiong}), (\ref{definitiongN}), (\ref{definitiongNtilde}) and the assumption $C_d=1$,
\[\tilde{g}_N\stackrel{L^2(\R^2)}{\longrightarrow}g \mbox{ as }N \to \infty\] holds.
\end{lemma}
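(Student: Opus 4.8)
The plan is to deduce the $L^2(\R^2)$-convergence from two facts: that $\tilde g_N\to g$ almost everywhere on $\R^2$, and that the family $\{(\tilde g_N-g)^2\}_N$ is uniformly integrable; Vitali's convergence theorem then gives the claim. Two elementary observations are used throughout. First, since $f$ is bounded and $f(t)\sim t^{d-1}$ (recall $C_d=1$), there is a constant $K_0>0$ with $|f(t)|\le K_0 t_+^{d-1}$ for all $t\in\R$; this strengthens (\ref{basicinequalitywithK}) and makes the estimates below uniform in $N$. Second, the lattice fact that $t-k\varepsilon=(tm-k)/m\in\varepsilon\Z$, so $t-k\varepsilon\ge\varepsilon$ whenever it is positive; in particular $f$ evaluated at such an argument is either $0$ or of absolute value at most $K_0 m^{1-d}$, and for a fixed $x_1$ at most one integer $t$ can make $t-\lfloor Nx_1\rfloor\varepsilon$ equal to the lattice minimum $\varepsilon$.

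For the pointwise statement I would fix $(x_1,x_2)$ with $x_1\ne x_2$ (the diagonal being a Lebesgue-null set and the only place where $g$ is infinite). Writing the defining sum as the integral of a step function (which, for our fixed off-diagonal $(x_1,x_2)$ and $N$ large, represents $\tilde g_N$), $\tilde g_N(x_1,x_2)=N^{2-2d}\int_0^1 f(\lceil Nv\rceil-\lfloor Nx_1\rfloor\varepsilon)\,f(\lceil Nv\rceil+h-\lfloor Nx_2\rfloor\varepsilon)\,dv=\int_0^1\phi_{N,1}(v)\,\phi_{N,2}(v)\,dv$ with $\phi_{N,1}(v):=N^{1-d}f(\lceil Nv\rceil-\lfloor Nx_1\rfloor\varepsilon)$ and $\phi_{N,2}(v):=N^{1-d}f(\lceil Nv\rceil+h-\lfloor Nx_2\rfloor\varepsilon)$. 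Because $\lceil Nv\rceil-\lfloor Nx_1\rfloor\varepsilon=N(v-\varepsilon x_1)+O(1)$, the fixed lag $h$ is negligible after division by $N$, and $N^{1-d}f(s_N)\to u_+^{d-1}$ whenever $s_N/N\to u$ (by regular variation of $f$ if $u>0$, while $f(s_N)=0$ eventually if $u<0$), the integrand $\phi_{N,1}\phi_{N,2}$ converges for a.e.\ $v$ to $(v-\varepsilon x_1)_+^{d-1}(v-\varepsilon x_2)_+^{d-1}$, which is integrable over $[0,1]$ because $x_1\ne x_2$. Splitting $[0,1]$ into a fixed neighbourhood of $\{\varepsilon x_1,\varepsilon x_2\}$ (on which, by the bound recorded in the next paragraph, the contribution is small uniformly in $N$) and its complement (on which dominated convergence applies) yields $\tilde g_N(x_1,x_2)\to g(x_1,x_2)$ for a.e.\ $(x_1,x_2)$.

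For uniform integrability I would first record the bound $|\phi_{N,1}(v)|\le K_0(v-\varepsilon x_1)_+^{d-1}+K_0(Nm)^{1-d}1_{\{|v-\varepsilon x_1|<c/N\}}$ with $c:=2+\varepsilon$ (a consequence of $|f|\le K_0(\cdot)_+^{d-1}$, the lattice fact, and the inequalities $N(v-\varepsilon x_1)\le\lceil Nv\rceil-\lfloor Nx_1\rfloor\varepsilon\le N(v-\varepsilon x_1)+1+\varepsilon$), together with its analogue for $\phi_{N,2}$ (with $\varepsilon x_1$ replaced by $\varepsilon x_2-h/N$). Multiplying and integrating in $v$ gives $|\tilde g_N|\le K_0^2 I_N+R_N$ (trivially on the diagonal squares, where $\tilde g_N=0$, and by the above elsewhere), where $I_N(x_1,x_2):=\int_0^1(v-\varepsilon x_1)_+^{d-1}(v-\varepsilon x_2+h/N)_+^{d-1}\,dv$ and $R_N$ collects the three remaining products, each containing at least one indicator. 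For $I_N$ I would compute $\|I_N\|_{L^2(\R^2)}^2$ by expanding the square into a $(v,w)$-integral and performing the $x_1$- and $x_2$-integrations first: each inner integral equals $\int_\R(v-\varepsilon x)_+^{d-1}(w-\varepsilon x)_+^{d-1}\,dx=\mathrm{const}\cdot|v-w|^{2d-1}$ (the shift $h/N$ drops out by translation invariance of Lebesgue measure), so $\|I_N\|_{L^2(\R^2)}^2=\mathrm{const}\cdot\int_0^1\int_0^1|v-w|^{4d-2}\,dv\,dw$, which is finite precisely because $d>\frac14$ and is independent of $N$; taking $h/N=0$ this is exactly the computation behind $g\in L^2(\R^2)$ alluded to after (\ref{definitiong}) and carried out as for (14.3.38) in \cite{GKS}. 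Since moreover $I_N\to g$ a.e.\ (by the previous paragraph), convergence of the $L^2$-norms upgrades this, via the standard Fatou argument, to $I_N\to g$ in $L^2(\R^2)$, so $\{I_N^2\}_N$ is uniformly integrable. For $R_N$: each indicator cuts the $v$-integral down to a set of length $O(1/N)$, so, splitting according to whether or not $x_1$ lies within $O(1/N)$ of $x_2-hm/N$, the pieces of $R_N$ are supported either on a set of measure $O(1/N)$ on which they have size $O(N^{1-2d})$, or on a set with one coordinate confined to a fixed bounded interval on which they have size $O(N^{-d})$; integrating the squares (using $\int_{c/N}^\infty u^{2d-2}\,du=O(N^{1-2d})$, valid since $d<\frac12$) gives $\|R_N\|_{L^2(\R^2)}^2=O(N^{1-4d})\to0$, so $\{R_N^2\}_N$ is uniformly integrable as well. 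Since $(\tilde g_N-g)^2\le 4K_0^4 I_N^2+4R_N^2+2g^2$ and all three families on the right are uniformly integrable, so is $\{(\tilde g_N-g)^2\}_N$; combined with the a.e.\ convergence, Vitali's theorem yields $\tilde g_N\to g$ in $L^2(\R^2)$.

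The hard part is the interplay between the non-integrable local singularity $v\mapsto(v-\varepsilon x_i)_+^{2(d-1)}$ (which makes $g(x_1,x_1)=\infty$ and so rules out any bound on $\tilde g_N$ uniform in both $N$ and $(x_1,x_2)$) and the discreteness of the $\varepsilon$-lattice: the sum defining $\tilde g_N$ can overshoot the comparison integral $I_N$ only in $O(1/N)$-neighbourhoods of $v=\varepsilon x_i$, and one has to exploit the lattice spacing together with $d>\frac14$ to see that the resulting boundary terms $R_N$ vanish in $L^2(\R^2)$. This is precisely where $d>\frac14$ (rather than merely $d\in(0,\frac12)$) is used, and it is the continuous-time counterpart of the bookkeeping behind (14.3.38) in \cite{GKS}.
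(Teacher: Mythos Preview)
Your argument is correct and arrives at the same conclusion as the paper, but via a genuinely different route. The paper splits the $v$-integration domain into three regions according to the position of $v$ relative to $\varepsilon x_2$ (and symmetrically for $x_1>x_2$): a main region where the integrand is dominated pointwise by $K^2$ times the integrand of $g$, so that the corresponding piece $G_N^{(1)}$ is dominated by $K^2 g$ and converges to $g$ by dominated convergence; and a boundary layer of width $O(1/N)$ producing a remainder $G_N^{(2)}$ which is shown to vanish in $L^2$ by a further case split $|x_1-x_2|\le 1$ versus $>1$. You instead work with the sharper global bound $|f(t)|\le K_0\,t_+^{d-1}$ on the integrand itself, obtaining $|\tilde g_N|\le K_0^2 I_N+R_N$ with $I_N$ a shifted version of $g$; the observation that $\|I_N\|_{L^2}=\|g\|_{L^2}$ exactly for every $N$ (because the $h/N$-shift is absorbed by translation invariance of the $x_2$-integral) together with a.e.\ convergence gives $I_N\to g$ in $L^2$ via the Riesz/Brezis--Lieb principle, and you then pass to $\tilde g_N$ through Vitali. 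Your $R_N$ plays the same role as the paper's $G_N^{(2)}$ and is handled by essentially the same $O(N^{1-4d})$ bookkeeping.

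Two small points. First, when you write ``$I_N\to g$ a.e.\ by the previous paragraph'', that paragraph treated $\tilde g_N$, not $I_N$; the argument for $I_N$ is of course simpler (only the $h/N$-shift has to be removed), but it is a separate check. Second, your $R_N$ estimate is stated rather tersely; the split you describe is correct and yields $\|R_N\|_{L^2}^2=O(N^{1-4d})$, but a reader may want the intermediate step $\int_{c/N}^{\infty}|x_1-x_2|^{2d-2}\,dx_1=O(N^{1-2d})$ spelled out for the ``far'' piece of the cross terms. Neither affects the validity of the proof.
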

\begin{proof}
Expressing the sum as an integral of a step function, we see that
\begin{eqnarray*}
\tilde{g}_N(x_1,x_2) & = & N^{1-2d} \sum_{t=1}^{N} f(t-\varepsilon\left\lfloor N x_1 \right\rfloor) f(t+h-\varepsilon \left\lfloor N x_2 \right\rfloor)\\
& = &   N^{2-2d} \int_{0}^{1} f(\left\lfloor Nt \right\rfloor +1-\varepsilon\left\lfloor N x_1 \right\rfloor) f(\left\lfloor Nt \right\rfloor +1 +h-\varepsilon \left\lfloor N x_2 \right\rfloor) \, dt,
\end{eqnarray*}
for $x_1, x_2 \in \R$ with $x_1\neq x_2$ and $\tilde{g}_N(x,x)=0$.

1. For our further calculations, we need estimates for $f(\left\lfloor Nt \right\rfloor +1+h-\varepsilon\left\lfloor N x_2 \right\rfloor)$ with $t \in [0,1]$. We consider the three cases $t > \varepsilon x_2$, $t < \varepsilon x_2 - \frac{1+h+\varepsilon}{N}$ and $\varepsilon x_2 - \frac{1+h+\varepsilon}{N} \leq t \leq \varepsilon x_2$.

If $t > \varepsilon x_2$ and $t \in [0,1]$, then $\left\lfloor Nt \right \rfloor +1+h > \varepsilon \left \lfloor N x_2 \right \rfloor$ and hence
\[f(\left\lfloor Nt \right\rfloor +1+h-\varepsilon\left\lfloor N x_2 \right\rfloor)\leq K (\left\lfloor Nt \right\rfloor +1+h-\varepsilon\left\lfloor N x_2 \right\rfloor)^{d-1} \leq K N^{d-1} (t-\varepsilon x_2)^{d-1}.\]
If $t < \varepsilon x_2 - \frac{1+h+\varepsilon}{N}$ and $t \in [0,1]$, then \[\left \lfloor Nt \right \rfloor +1 +h < \varepsilon \left \lfloor N x_2 \right \rfloor\] and hence \[f(\left\lfloor Nt \right\rfloor +1+h-\varepsilon\left\lfloor N x_2 \right\rfloor)=0.\] Since $f$ is bounded by $K$ by our assumptions, we also have \[|f(\left\lfloor Nt \right\rfloor +1+h-\varepsilon\left\lfloor N x_2 \right\rfloor)|\leq K,\] especially in the third case $\varepsilon x_2 - \frac{1+h+\varepsilon}{N} \leq t \leq \varepsilon x_2$.

2. It suffices to show $\tilde{g}_N1_{\{x_1<x_2\}}\stackrel{L^2(\R^2)}{\longrightarrow}g1_{\{x_1<x_2\}}$ and $\tilde{g}_N1_{\{x_1>x_2\}}\stackrel{L^2(\R^2)}{\longrightarrow}g1_{\{x_1>x_2\}}$. We only show $\tilde{g}_N1_{\{x_1<x_2\}}\stackrel{L^2(\R^2)}{\longrightarrow}g1_{\{x_1<x_2\}}$, the other convergence follows in an analogous manner. For the first convergence, we further decompose the function $\tilde{g}_N1_{\{x_1<x_2\}}$ into three parts according to the cases in the last paragraph with respect to $x_2$. For $x_2>x_1$ we now have

\begin{eqnarray*}
\tilde{g}_N(x_1,x_2) & = &  N^{2-2d} \int_{[0,1] \cap (\varepsilon x_2,\infty)} f(\left\lfloor Nt \right\rfloor +1-\varepsilon\left\lfloor N x_1 \right\rfloor) f(\left\lfloor Nt \right\rfloor +1 +h-\varepsilon \left\lfloor N x_2 \right\rfloor) \, dt\\
&+ &  N^{2-2d} \int_{[0,1] \cap (-\infty,\varepsilon x_2 - \frac{1+h+\varepsilon}{N})} f(\left\lfloor Nt \right\rfloor +1-\varepsilon\left\lfloor N x_1 \right\rfloor) \underbrace{f(\left\lfloor Nt \right\rfloor +1 +h-\varepsilon \left\lfloor N x_2 \right\rfloor)}_{=0} \, dt\\
&+ &  N^{2-2d} \int_{[0,1] \cap [\varepsilon x_2 - \frac{1+h+\varepsilon}{N},\varepsilon x_2]} f(\left\lfloor Nt \right\rfloor +1-\varepsilon\left\lfloor N x_1 \right\rfloor) f(\left\lfloor Nt \right\rfloor +1 +h-\varepsilon \left\lfloor N x_2 \right\rfloor) \, dt\\
&=:& G_N^{(1)}(x_1,x_2)+ G_N^{(2)}(x_1,x_2)
\end{eqnarray*}
with
\[G_N^{(1)}(x_1,x_2):=N^{2-2d} \int_{[0,1] \cap (\varepsilon x_2,\infty)} f(\left\lfloor Nt \right\rfloor +1-\varepsilon\left\lfloor N x_1 \right\rfloor) f(\left\lfloor Nt \right\rfloor +1 +h-\varepsilon \left\lfloor N x_2 \right\rfloor) \, dt\]
and
\[G_N^{(2)}(x_1,x_2):= N^{2-2d} \int_{[0,1] \cap [\varepsilon x_2 - \frac{1+h+\varepsilon}{N},\varepsilon x_2]} f(\left\lfloor Nt \right\rfloor +1-\varepsilon\left\lfloor N x_1 \right\rfloor) f(\left\lfloor Nt \right\rfloor +1 +h-\varepsilon \left\lfloor N x_2 \right\rfloor) \, dt.\]

3.   Let us again assume $x_1<x_2$:

For $t \in [0,1] \cap (\varepsilon x_2,\infty)$ we also have $t>\varepsilon x_1$ since we assume $x_1<x_2$ and hence
\[f(\left\lfloor Nt \right\rfloor +1-\varepsilon\left\lfloor N x_1 \right\rfloor) \leq K N^{d-1} (t-\varepsilon x_1)^{d-1}\]
and
\[f(\left\lfloor Nt \right\rfloor +1+h-\varepsilon\left\lfloor N x_2 \right\rfloor) \leq K N^{d-1} (t-\varepsilon x_2)^{d-1}.\]
Hence we have
\[N^{2-2d} |f(\left\lfloor Nt \right\rfloor +1-\varepsilon\left\lfloor N x_1 \right\rfloor)f(\left\lfloor Nt \right\rfloor +1+h-\varepsilon\left\lfloor N x_2 \right\rfloor)|\leq K^2 (t-\varepsilon x_1)^{d-1}(t-\varepsilon x_2)^{d-1}\]
and hence
\[G_N^{(1)}(x_1,x_2) \leq K^2 g(x_1,x_2)= K^2\int_0^1 (t-\varepsilon x_1)^{d-1}(t-\varepsilon x_2)^{d-1} \, dt <\infty.\]

Because of $t>\varepsilon x_1$, $t>\varepsilon x_2$ we have $\left\lfloor Nt \right\rfloor +1-\varepsilon\left\lfloor N x_1 \right\rfloor \to \infty$ and $\left\lfloor Nt \right\rfloor +1+h-\varepsilon\left\lfloor N x_2 \right\rfloor \to \infty$ as $N \to \infty$ as well as $\frac{\left\lfloor Nt \right\rfloor +1-\varepsilon\left\lfloor N x_1 \right\rfloor}{N(t-\varepsilon x_1)} \to 1$ and $\frac{\left\lfloor Nt \right\rfloor +1+h-\varepsilon\left\lfloor N x_2 \right\rfloor}{N(t-\varepsilon x_2)} \to 1$.
Hence
\begin{eqnarray*}
&& \lim_{N \to \infty} N^{2-2d} f(\left\lfloor Nt \right\rfloor +1-\varepsilon\left\lfloor N x_1 \right\rfloor)f(\left\lfloor Nt \right\rfloor +1+h-\varepsilon\left\lfloor N x_2 \right\rfloor)\\ &=& \lim_{N \to \infty} N^{2-2d} (\left\lfloor Nt \right\rfloor +1-\varepsilon\left\lfloor N x_1 \right\rfloor)^{d-1} (\left\lfloor Nt \right\rfloor +1+h-\varepsilon\left\lfloor N x_2 \right\rfloor)^{d-1}\\
&=& (t-\varepsilon x_1)^{d-1} (t-\varepsilon x_2)^{d-1}.
\end{eqnarray*}
With Lebesgue's convergence theorem, we conclude
\begin{eqnarray*}\lim_{N \to \infty} G_N^{(1)} (x_1,x_2)&=& \int_{[0,1] \cap (\varepsilon x_2,\infty)} (t-\varepsilon x_1)_+^{d-1} (t-\varepsilon x_2)_+^{d-1} \, dt\\
&=& \int_{[0,1]} (t-\varepsilon x_1)_+^{d-1} (t-\varepsilon x_2)_+^{d-1} \, dt= g(x_1,x_2).
\end{eqnarray*}
Since $G_N^{(1)}(x_1,x_2)1_{\{x_1<x_2\}}$ is bounded by $K^2g$ which is square integrable and $G_N^{(1)}(x_1,x_2)1_{\{x_1<x_2\}}$ converges pointwise towards $g1_{\{x_1<x_2\}}$,  $G_N^{(1)}$ converges to $g1_{\{x_1<x_2\}}$ in $L^2(\R^2)$ by Lebesgue's theorem.

4. For the  $L^2$-convergence, it suffices to show that $G_N^{(1)}1_{\{x_1<x_2\}} \stackrel{L^2(\R^2)}{\longrightarrow} g1_{\{x_1<x_2\}}$ and $G_N^{(2)}1_{\{x_1<x_2\}} \stackrel{L^2(\R^2)}{\longrightarrow} 0$. That $G_N^{(1)}1_{\{x_1<x_2\}} \stackrel{L^2(\R^2)}{\longrightarrow} g1_{\{x_1<x_2\}}$ was already shown in the last paragraph.
Define $G^{(3)}_N:=G_N^{(2)}1_{\{0<x_2-x_1\leq 1\}}$ and $G^{(4)}_N:=G_N^{(2)}1_{\{x_2-x_1>1\}}$. We show $G_N^{(3)} \stackrel{L^2(\R^2)}{\longrightarrow} 0$ and $G_N^{(4)} \stackrel{L^2(\R^2)}{\longrightarrow} 0$.
Note that $G^{(2)}_N(x_1,x_2)$ vanishes for $x_2<0$ and $\varepsilon x_2 >3 \geq 1 + \frac{1+h+\varepsilon}{N}$ (which is true for $N$ sufficiently large).

5. Define \[A(\varepsilon):= \{t \in [0,1] \cap [\varepsilon x_2 - \frac{1+h+\varepsilon}{N}, \varepsilon x_2]: \left\lfloor Nt\right\rfloor +1+h - \varepsilon \left\lfloor N x_2 \right\rfloor>0 \}\] and
\[B(\varepsilon):= \{t \in [0,1] \cap [\varepsilon x_2 - \frac{1+h+\varepsilon}{N}, \varepsilon x_2]: \left\lfloor Nt\right\rfloor +1+h - \varepsilon \left\lfloor N x_2 \right\rfloor\leq 0 \}.\]
Then $f(\left\lfloor Nt\right\rfloor +1+h - \varepsilon \left\lfloor N x_2 \right\rfloor)$ vanishes for $t  \in B(\varepsilon)$. Further, denoting by $\lambda$ the one-dimensional Lebesgue measure, we see that $\lambda(A(\varepsilon)) \leq \frac{1+h+\varepsilon}{N}$.
For $t \in A(\varepsilon)$, we have \[\left\lfloor Nt\right\rfloor +1+h - \varepsilon \left\lfloor N x_2 \right\rfloor\geq \varepsilon\] and hence
\[\left\lfloor Nt\right\rfloor +1 - \varepsilon \left\lfloor N x_1 \right\rfloor \geq \varepsilon-h + \varepsilon(\left\lfloor N x_2 \right\rfloor - \left\lfloor N x_1\right\rfloor).\]
With this estimate and the fact that $f(x)$ is bounded by $K (\max(x,1))^{d-1}$, we obtain
\begin{eqnarray*}
|G_N^{(2)}(x_1,x_2)| & = &N^{2-2d} \int_{A(\varepsilon)} f(\left\lfloor Nt \right\rfloor +1-\varepsilon\left\lfloor N x_1 \right\rfloor) f(\left\lfloor Nt \right\rfloor +1 +h-\varepsilon \left\lfloor N x_2 \right\rfloor) \, dt\\
& \leq& \lambda (A(\varepsilon)) N^{2-2d} K^2 \max(\varepsilon-h + \varepsilon(\left\lfloor N x_2 \right\rfloor - \left\lfloor N x_1\right\rfloor),1)^{d-1}\\
& \leq & (1+h+\varepsilon) K^2 N^{1-2d} \max(\varepsilon-h + \varepsilon(\left\lfloor N x_2 \right\rfloor - \left\lfloor N x_1\right\rfloor),1)^{d-1}.
\end{eqnarray*}
One can see that
\[\lambda(\{x_1 \in [x_2-1,x_2]: \left\lfloor N x_2 \right\rfloor - \left\lfloor N x_1 \right\rfloor = i\}) \leq \frac{1}{N}\]
for $i \in \mathbb{N}_0$ and that $\varepsilon - h + \varepsilon i>1$ for $i > \frac{1+h}{\varepsilon}-1$. 
Hence
\begin{eqnarray*}
||G^{(3)}_N||_{L^2(\R^2)}^2 & = & \int_0^{\frac{3}{\varepsilon}} \int_{x_2-1}^{x_2} |G_N^{(2)}(x_1,x_2)|^2 \, dx_1 dx_2\\
& \leq &(1+h+\varepsilon)^2 K^4 N^{1-4d} \frac{3}{\varepsilon}  (\frac{h+1}{\varepsilon} +\sum_{i=\frac{h+1}{\varepsilon}}^{\infty} ((\varepsilon -h + \varepsilon i)_+^{2d-2}))\\
& \stackrel{N \to \infty}{\rightarrow}&0.
\end{eqnarray*}

6.
 Now let $t \in [0,1] \cap [\varepsilon x_2 - \frac{1+h+\varepsilon}{N}, \varepsilon x_2]$ and $x_2-x_1>1$. Then $t>\varepsilon x_1$ for $N>\frac{1+h+\varepsilon}{\varepsilon}$. 
Hence we have
\[N^{2-2d} |f(\left\lfloor Nt \right\rfloor +1-\varepsilon\left\lfloor N x_1 \right\rfloor)f(\left\lfloor Nt \right\rfloor +1+h-\varepsilon\left\lfloor N x_2 \right\rfloor)| \leq K (t-\varepsilon x_1)^{d-1} K N^{1-d}\]
and hence
\begin{eqnarray*}
|G_N^{(2)}(x_1,x_2)| & \leq &K^2 N^{1-d} \int_{[0,1] \cap [\varepsilon x_2 - \frac{1+h+\varepsilon}{N},\varepsilon x_2]}(t-\varepsilon x_1)^{d-1} \, dt \\
& \leq & (1+h+ \varepsilon) K^2 (\varepsilon x_2 - \frac{1+h+\varepsilon}{N} - \varepsilon x_1)^{d-1} N^{-d}.
\end{eqnarray*}
 Let $N$ be sufficiently large such that $\frac{1+h+\varepsilon}{N} < \frac{\varepsilon}{2}$. We then have
\begin{eqnarray*}
\int_{-\infty}^{x_2-1} |G_N^{(2)}(x_1,x_2)|^2 \, dx_1 & \leq & K^4 (1+h+\varepsilon)^2 N^{-2d} \int_{-\infty}^{x_2-1} (\varepsilon x_2 - \frac{1+h+\varepsilon}{N} - \varepsilon x_1)^{2d-2} \, dx_1\\
& \leq & K^4 (1+h+\varepsilon)^2 N^{-2d} \int_{-\infty}^{x_2-1} (\varepsilon x_2 - \frac{\varepsilon}{2} - \varepsilon x_1)^{2d-2} \, dx_1\\
& = & K^4 (1+h+\varepsilon)^2 N^{-2d} \varepsilon^{2d-2} \underbrace{\int_{\frac{1}{2}}^{\infty} v^{2d-2} \, dv}_{< \infty}\\
\end{eqnarray*}
and hence
\begin{eqnarray*}
||G^{(4)}_N||_{L^2(\R^2)}^2 & = & \int_0^{\frac{3}{\varepsilon}} \int_{-\infty}^{x_2-1} |G_N^{(2)}(x_1,x_2)|^2 \, dx_1 dx_2\\
& \leq & K^4 (1+h+\varepsilon)^2 N^{-2d} \varepsilon^{2d-2} \frac{3}{\varepsilon} \int_{\frac{1}{2}}^{\infty} v^{2d-2} \, dv\\
& \stackrel{N \to \infty }{\rightarrow} & 0.
\end{eqnarray*}

\end{proof}
Returning to the proof Theorem \ref{Theoremfourthmoment},  we define for the process $(X_t)_{t \in \R}$ similar random variables that correspond to the increments and the non-diagonal  parts of the autocovariance function of the process $(X_t^{(m)})_{t \in \R}$ defined in (\ref{increments}) and (\ref{definitionrn}). Note that the diagonal part $d_{N,h,\varepsilon}$ is only needed in section 3, which is defined in (\ref{definitiondn}).
Define
\begin{equation} \label{definitionbarz} \bar{Z}_{k,t,h} := \int_{\varepsilon(k-1)}^{\varepsilon k} f(t+h-s) \, dL_s, \quad k \in \mathbb{Z}, \end{equation}
where we suppress the dependence on $\varepsilon$ for notational convenience,
\begin{equation} \label{definitionbarrn} \bar{r}_{N,h,\varepsilon} := \frac{1}{N} \sum_{t=1}^N \sum_{k \neq k'} \bar{Z}_{k,t,0} \bar{Z}_{k',t,h}, \quad h \in \mathbb{N}_0, \end{equation}
and
\begin{equation} \label{definitionbardn} \bar{d}_{N,h,\varepsilon} := \frac{1}{N} \sum_{t=1}^N \sum_{k} \bar{Z}_{k,t,0}\bar{Z}_{k,t,h}, \quad h \in \mathbb{N}_0.\end{equation}
$\bar{r}_{N,h,\varepsilon}$ can be seen to converge unconditionally in  $L^2(\PP)$ like $r_{N,h,\varepsilon}$. By the next estimates, we see that $\bar{d}_{N,h,\varepsilon}$ converges in  $L^1(\PP)$  unconditionally, where we use Cauchy's equality for square integrable random variables in the first inequality and Cauchy's equality for square summable sequences in the second inequality and the fact that $a_k:=\sqrt{\int_{\varepsilon(k-1)}^{\varepsilon k } f(t-s)^2 \, ds}$ and $b_k:= \sqrt{\int_{\varepsilon(k-1)}^{\varepsilon k } f(t+h-s)^2 \, ds}$ for $k \in \mathbb{Z}$ define two sequences in $l^2(\mathbb{Z})$ and hence \[\sum_{k} |a_kb_k| \leq \sqrt{\sum_k a_k^2} \sqrt{\sum_k b_k^2}=\sum_{k} a_k^2\]
and consequently

\begin{eqnarray*}
& & \sum_{k} \E[|\bar{Z}_{k,t,0}\bar{Z}_{k,t,h}|]\\
&\leq & \sum_{k} \sqrt{\E[|\bar{Z}_{k,t,0}|^2]} \sqrt{\E[|\bar{Z}_{k,t,h}|^2]}\\
&=& \sigma^2 \sum_{k} \sqrt{\int_{\varepsilon(k-1)}^{\varepsilon k } f(t-s)^2 \, ds} \sqrt{\int_{\varepsilon(k-1)}^{\varepsilon k } f(t+h-s)^2 \, ds}\\
&\leq & \sigma^2 \sum_{k} \int_{\varepsilon(k-1)}^{\varepsilon k } f(t-s)^2 \, ds\\
&= & \sigma^2 \int_{- \infty}^{\infty}  f(t-s)^2 \, ds < \infty.
\end{eqnarray*}

Note that $X_{t+h}=\sum_{k = -\infty}^{\infty} \bar{Z}_{k,t,h}$ converges unconditionally in $L^2(\PP)$. Then by Lemma \ref{technicallemma1} we see that 
\begin{equation}
\hat{\gamma}_N(h)=\bar{r}_{N,h,\varepsilon}+\bar{d}_{N,h,\varepsilon} \label{decompositionautocoariance}
\end{equation}
is true as we showed the equality of (\ref{lefthandsidereaarrangement}) and (\ref{righthandsidereaarrangement}).
We want to prove the theorem by using Slutsky's lemma. As we have already seen
\[N^{1-2d}(r_{N,0,\varepsilon},\ldots,r_{N,H,\varepsilon}) \stackrel{d}{\rightarrow} C_d^2 \sigma^2 U_d(1) (1,\ldots,1),\]
it suffices to show for all $\delta>0$
\[\lim_{N \to \infty} \PP[|N^{1-2d} (\hat{\gamma}_N(h)-\gamma(h)-r_{N,h,\varepsilon})|>\delta]=0.\]

We split $\hat{\gamma}_N(h)-\gamma(h)-r_{N,h,\varepsilon}= (\bar{r}_{N,h,\varepsilon} - r_{N,h,\varepsilon}) + (\bar{d}_{N,h,\varepsilon} -\gamma(h))$.
Then we conclude the proof of Theorem \ref{Theoremfourthmoment} by using the next two lemmas.

\begin{lemma}\label{asymptoticrn} Let all assumptions of Theorem \ref{Theoremfourthmoment} apart from $\E[L_1^4] <\infty$ be fulfilled. Then for all $\delta>0$
\[\lim_{N \to \infty} \PP[|N^{1-2d}(\bar{r}_{N,h,\varepsilon} - r_{N,h,\varepsilon})|>\delta]=0.\]

\end{lemma}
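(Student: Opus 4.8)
The plan is to prove the stronger statement that $N^{1-2d}(\bar r_{N,h,\varepsilon}-r_{N,h,\varepsilon})\to 0$ in $L^2(\PP)$ and then invoke Chebyshev's inequality; here $m\in\N$ (hence $\varepsilon=\frac1m$) is fixed throughout. Since for $k\neq k'$ the quantities $\bar Z_{k,t,\cdot}$ and $Z_k$ are stochastic integrals over the disjoint intervals $(\varepsilon(k-1),\varepsilon k]$ and $L$ has independent, centred increments, both $\bar r_{N,h,\varepsilon}$ and $r_{N,h,\varepsilon}$ have mean zero, so it suffices to show $N^{2-4d}\,\E[(\bar r_{N,h,\varepsilon}-r_{N,h,\varepsilon})^2]\to 0$.

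First I would note that, because $t$ and $t+h$ are integer multiples of $\varepsilon$, one has $f(t+h-\varepsilon k)Z_k=\int_{\varepsilon(k-1)}^{\varepsilon k}f_m(t+h-s)\,dL_s$, so with $\rho:=f-f_m$ and $e_{k,t,h}:=\bar Z_{k,t,h}-f(t+h-\varepsilon k)Z_k=\int_{\varepsilon(k-1)}^{\varepsilon k}\rho(t+h-s)\,dL_s$ one obtains, after adding and subtracting a mixed product,
\[\bar r_{N,h,\varepsilon}-r_{N,h,\varepsilon}=\underbrace{\frac1N\sum_{t=1}^N\sum_{k\neq k'}e_{k,t,0}\,\bar Z_{k',t,h}}_{=:R_1}+\underbrace{\frac1N\sum_{t=1}^N\sum_{k\neq k'}f(t-\varepsilon k)Z_k\,e_{k',t,h}}_{=:R_2},\]
where (using $f_m\in L^2(\R)$, which holds since $\sum_k f(\varepsilon k)^2<\infty$ for $d<\frac12$) each double series converges unconditionally in $L^2(\PP)$ by the same orthogonality argument used above for $\bar r_{N,h,\varepsilon}$, because $\sum_k\E[e_{k,t,h}^2]=\sigma^2\|\rho\|_2^2<\infty$ and $\sum_k\E[\bar Z_{k,t,h}^2]=\sigma^2\|f\|_2^2<\infty$. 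It then remains to bound $\E[R_1^2]$ and $\E[R_2^2]$, since $\E[(\bar r_{N,h,\varepsilon}-r_{N,h,\varepsilon})^2]\leq 2\E[R_1^2]+2\E[R_2^2]$.

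Expanding $\E[R_1^2]$ into a sum over $t,t'\in\{1,\dots,N\}$ and $k\neq k'$, $l\neq l'$, independence and centredness of integrals over disjoint intervals leave only the configurations $(l,l')=(k,k')$ and $(l,l')=(k',k)$, and It\^o's isometry on a single interval reduces these to
\[\E[R_1^2]=\frac1{N^2}\sum_{t,t'=1}^N\sum_{k\neq k'}\Big(\E[e_{k,t,0}e_{k,t',0}]\,\E[\bar Z_{k',t,h}\bar Z_{k',t',h}]+\E[e_{k,t,0}\bar Z_{k,t',h}]\,\E[\bar Z_{k',t,h}e_{k',t',0}]\Big).\]
Taking absolute values, bounding each covariance by $\sigma^2$ times the integral of the product of the moduli of its two kernels and summing over the intervals, every inner factor becomes a convolution $\int_{\R}|\rho(u)||f(u+\tau)|\,du$, $\int_{\R}|\rho(u)||\rho(u+\tau)|\,du$ or $\int_{\R}|f(u)||f(u+\tau)|\,du$ with $\tau$ equal to $\pm(t-t')$ up to the fixed shift $h$. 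The decisive estimate—obtained by splitting each integral at $|\tau|$ and using $f(t)\sim C_d t^{d-1}$, the boundedness of $f$, and the resulting $\rho(u)=o(u^{d-1})$ as $u\to\infty$ together with $\int_0^T|\rho(u)|\,du=o(T^d)$—is that the first two convolutions are $o(|\tau|^{2d-1})$ as $|\tau|\to\infty$, while the third is the standard $O(|\tau|^{2d-1})$. Hence each summand above is a function of $t-t'$ that is $o(|t-t'|^{4d-2})$ (the diagonal $t=t'$ contributing $O(N^{-1})$ in total), and since $4d-1>0$ for $d\in(\frac14,\frac12)$,
\[\sum_{t,t'=1}^N o(|t-t'|^{4d-2})\leq N\sum_{|\tau|<N}o(|\tau|^{4d-2})=N\cdot o(N^{4d-1})=o(N^{4d}),\]
so $\E[R_1^2]=o(N^{4d-2})$. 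The term $R_2$ is treated identically with the roles of $f$ and $f_m$ interchanged, which only improves the estimates since $|f_m|$ obeys the same pointwise bound $K\max(1,x^{d-1})$ as $|f|$. Thus $N^{2-4d}\,\E[(\bar r_{N,h,\varepsilon}-r_{N,h,\varepsilon})^2]=o(1)$, and Chebyshev's inequality finishes the proof.

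The main obstacle is the refinement from $O(|\tau|^{2d-1})$ to $o(|\tau|^{2d-1})$ in the convolution estimates involving $\rho$: the crude bound $|\rho(u)|\leq K\max(1,u^{d-1})$ alone only gives $\E[R_1^2]=O(N^{4d-2})$, i.e.\ $N^{1-2d}(\bar r_{N,h,\varepsilon}-r_{N,h,\varepsilon})$ bounded but not vanishing in $L^2(\PP)$, so the extra decay must be squeezed out of $f(u)/u^{d-1}\to C_d$ (hence $\rho(u)/u^{d-1}\to 0$), and one must be careful that no smoothness of $f$ is available. The remaining steps—justifying the interchange of expectation with the unconditionally convergent interval series by truncation, and tracking the fixed shift $h$ and the kernel supports as $\tau$ changes sign—are routine.
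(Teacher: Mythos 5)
Your proposal is correct and follows essentially the same route as the paper: the same decomposition of $\bar{r}_{N,h,\varepsilon}-r_{N,h,\varepsilon}$ into the two cross terms (your $R_1,R_2$ are the paper's $S_2,S_1$ up to the factor $N^2$), the same reduction to the configurations $(l,l')=(k,k')$ and $(k',k)$, and the same key refinement from $O$ to $o$ extracted from $f(u)/u^{d-1}\to C_d$ (your pointwise $\rho(u)=o(u^{d-1})$ is the paper's $\tilde g(x)\to 0$ in $L^2$-averaged form), followed by the identical Ces\`aro-type summation over $\tau=t-t'$ using $4d-1>0$. The only differences are presentational (convolution integrals versus the paper's sums against the auxiliary function $v$, and Chebyshev versus Markov on the squared variable).
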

\begin{proof}
We show that
\[\E[(N(\bar{r}_{N,h,\varepsilon} - r_{N,h,\varepsilon}))^2] = o(N^{4d})\]
and the claim then follows by Markov's inequality.
To this end, we mimic the proof of Lemma 5.5 in $\cite{HK}$ in the following.
One can see by the inequality $(a-b)^2\leq 2(a^2+b^2)$ that
\[(N(\bar{r}_{N,h,\varepsilon} - r_{N,h,\varepsilon}))^2 \leq 2 [S_1(N,\varepsilon) + S_2(N,\varepsilon)],\]
where we define $S_1(N,\varepsilon)$ and $S_2(N,\varepsilon)$ by
\[ S_1(N,\varepsilon):= \Big[ \sum_{t=1}^N \sum_{k \neq k'} f(t-\varepsilon k) Z_k ((f(t+h-\varepsilon k')Z_{k'} -\bar{Z}_{k',t,h})\Big]^2,\]
\[ S_2(N,\varepsilon):= \Big[ \sum_{t=1}^N \sum_{k \neq k'} (f(t-\varepsilon k)Z_k -\bar{Z}_{k,t,0})\bar{Z}_{k',t,h} \Big]^2.\]
Consequently, we evaluate $\E[S_1(N,\varepsilon)]$ and $\E[S_2(N,\varepsilon)]$.
We obtain
\begin{eqnarray}
&&\E[S_1(N,\varepsilon)] \label{ES1}\\
&=&\sum_{t,s=1}^{N} \sum_{k \neq k'} \sum_{i \neq i'} f(t-\varepsilon k) f(s - \varepsilon i) \E [Z_k (f(t+h-\varepsilon k')Z_{k'}-\bar{Z}_{k',t,h}) Z_i (f(s+h-\varepsilon i')Z_{i'}-\bar{Z}_{i',s,h})]. \nonumber\end{eqnarray}
Observe that $f(t+h-\varepsilon k)Z_{k}$ can be written as a stochastic integral:
\[f(t+h-\varepsilon k)Z_{k} = \int_{\varepsilon(k-1)}^{\varepsilon k} f(t+h-\varepsilon k) \, dL_s.\]
Hence by Itô's isometry, we have
\begin{eqnarray*}
\Var[(f(t+h-\varepsilon k)Z_{k}-\bar{Z}_{k,t,h})] &=& \sigma^2 \int_{\varepsilon(k-1)}^{\varepsilon k} (f(t+h-\varepsilon k)-f(t+h-s))^2 \, ds\\
&=& \sigma^2 \int_{0}^{\varepsilon} (f(t+h-\varepsilon k)-f(t+h-\varepsilon k+s))^2 \, ds.
\end{eqnarray*}

Define $\tilde{g}(x):= \sqrt{\int_{0}^{\varepsilon} (1-\frac{f(x+s)}{f(x)})^2 \, ds}$ for $x \in (0,\infty)$, where we assume for our calculations without loss of generality that $f$ does not vanish on $(0,\infty)$.

We obtain
\[\sqrt{\int_{0}^{\varepsilon} (f(x)-f(x+s))^2 \, ds}=x^{d-1} \tilde{g}(x) \frac{|f(x)|}{x^{d-1}} = O(x^{d-1} \tilde{g}(x)) \mbox{ as } x \to \infty\]
since $\lim_{x \to \infty} \frac{f(x)}{x^{d-1}}=C_d$.

We now show that $\lim_{x \to \infty} \tilde{g}(x)=0$. 
Let $\mu \in (0,C_d)$. Since $\lim_{x \to \infty} \frac{f(x)}{x^{d-1}}=C_d$, there is an $N_{\mu}$ such that $|\frac{f(x)}{x^{d-1}}-C_d|<\mu$ for $x \geq N_{\mu}$. Hence we find for $x \geq \max(N_{\mu},1)$ and $s \geq 0$
\begin{eqnarray*}
|1-\frac{f(x+s)}{f(x)}| & = & |\frac{f(x)-f(x+s)}{f(x)}|\\
& \leq & \Big| \frac{(C_d+\mu)x^{d-1}-(C_d-\mu)(x+s)^{d-1}}{(C_d-\mu)x^{d-1}} \Big|\\
&\leq & \Big| \frac{C_d}{C_d -\mu} (1-(\frac{x+s}{x})^{d-1})\Big| + 2 \mu.
\end{eqnarray*}
Hence
\[\limsup_{x \to \infty} \tilde{g}(x)^2 \leq \limsup_{x \to \infty} \int_{0}^{\varepsilon} (\Big| \frac{C_d}{C_d -\mu} (1-(\frac{x+s}{x})^{d-1})\Big|+ 2 \mu)^2 \, ds= \varepsilon 4 \mu^2\]
by Lebesgue's dominated convergence Theorem. Letting $\mu \to 0$, we obtain $\lim_{x \to \infty} \tilde{g}(x)=0$.

We can further replace $\tilde{g}$ by a bounded and decreasing function $g$  on $[1,\infty)$, e.g. by $g(x):=\sup_{y \geq x} \tilde{g}(y)$.

We define $v(t):=C( g(t) t^{d-1}1_{(1,\infty)}(t)+ 1_{[-\varepsilon,1]}(t))$ with $C$ large enough. Then

\[\sqrt{\int_{\varepsilon(k-1)}^{\varepsilon k} (f(t+h-\varepsilon k)-f(t+h-s))^2 \, ds} \leq v(t+h-\varepsilon k).\]

In the case $t+h-\varepsilon k < -\varepsilon$, the integral vanishes. $f$ is bounded by $K$ in the case $-\varepsilon \leq t+h-\varepsilon k < 1$.

All summands in (\ref{ES1}) vanish except for the cases (a) $k=i$ and $k'=i'$ and (b) $k=i'$ and $k'=i$.
By Cauchy's inequality, we get the estimate $\E[S_1(N,\varepsilon)] \leq \sigma^2 (E_{11} + E_{12})$ with
\[E_{11} := \sum_{t,s=1}^{N} \sum_{k \neq k'} |f(t-\varepsilon k) f(s-\varepsilon k)|  v(t+h-\varepsilon k') v(s+h-\varepsilon k')\]
and
\[E_{12} := \sum_{t,s=1}^{N} \sum_{k \neq k'} |f(t-\varepsilon k) f(s-\varepsilon k')|  v(t+h-\varepsilon k') v(s+h-\varepsilon k).\]
We consider here $E_{12}$. Similar calculations show that the same asymptotics also holds for $E_{11}$ and $\E[S_2(N,\varepsilon)]$.
We substitute $\varepsilon i = t -\varepsilon k$ and $\varepsilon i' = t+h - \varepsilon k'$. Then
\[|E_{12}|\leq \sum_{t,s=1}^{N} \sum_{i} |f(\varepsilon i)v(s-t+h+\varepsilon i)|\sum_{i'} |v(\varepsilon i') f(s-t-h+\varepsilon i')|.\]
Like in the proof of Lemma 5.5 in \cite{HK}, we denote with $E_{120}$ the summands where $s-t=0$, and $E_{12+}$ and $E_{12-}$ where $s-t>0$ or $s-t<0$ respectively. We obtain the upper estimates

\[|E_{120}|= N \sum_{i} |f(\varepsilon i)v(\varepsilon i+h)| \sum_{i'} |v(\varepsilon i') f(\varepsilon i'-h)| = O(N) \leq o(N^{4d}),\]
since $d>\frac{1}{4}$ and
\[|E_{12+}|\leq \sum_{n=1}^{N} (N-n) \sum_{i} |f(\varepsilon i)v(n+h+\varepsilon i)|\sum_{i'} |v(\varepsilon i') f(n-h+\varepsilon i')|.\]
By the integral convergence test, we obtain for $s-t=n \geq 1$
\begin{eqnarray*}
\sum_{i} |f(\varepsilon i)v(n+h+\varepsilon i)| &\leq & K v(n+h+\varepsilon)  + CK \int_{1}^{\infty} (\varepsilon x)^{d-1} (n+h+\varepsilon x)^{d-1}g(n+h+\varepsilon x) \, dx\\
&\leq& O(n^{d-1}g(n)) + CK n^{2d-1} \varepsilon^{2d-2} g(n) \int_{\frac{1}{n}}^{\infty} y^{d-1} (\frac{1}{\varepsilon} +y)^{d-1} \, dy
\end{eqnarray*}
and
\begin{eqnarray*}
\sum_{i'} |v(\varepsilon i') f(n-h+\varepsilon i')| &\leq& \sum_{i'=-1}^{(1+h)/\varepsilon} |v(\varepsilon i') f(n-h+\varepsilon i')| \\
&& + \sum_{i'=(1+h)/\varepsilon+1}^{\infty} CK g(\varepsilon i')(\varepsilon i')^{d-1} (n-h+\varepsilon i')^{d-1}\\
&\leq & O(n^{d-1}) + CK g(1) \int_{(1+h)/\varepsilon}^{\infty} (\varepsilon x)^{d-1} (n-h+\varepsilon x)^{d-1} \, dx\\ 
&\leq& O(n^{d-1}) + CK n^{2d-1} \varepsilon^{2d-2} g(1) \int_{\frac{1}{n}}^{\infty} y^{d-1} (\frac{1}{\varepsilon} +y)^{d-1} \, dy.
\end{eqnarray*}
Since $\varepsilon \leq 1$, we have
\[\int_{\frac{1}{n}}^{\infty} y^{d-1} (\frac{1}{\varepsilon} +y)^{d-1} \, dy  \leq \int_{0}^{\infty} y^{d-1} (1 +y)^{d-1} \, dy<\infty.\]
Hence
\[\sum_{i} |f(\varepsilon i)v(n+h+\varepsilon i)| = O(n^{2d-1}g(n))\]
and
\[\sum_{i'} |v(\varepsilon i') f(n-h+\varepsilon i')|=O(n^{2d-1}).\]
This gives
\[E_{12+} =O(N\sum_{n=1}^{N} n^{4d-2} g(n)) = o(N^{4d}) \mbox{ for } d>\frac{1}{4},\]
since
\[\lim_{N \to \infty} \frac{1}{N^{4d-1}} \sum_{j=1}^{N} j^{4d-2}g(j) \leq g(M)\lim_{N \to \infty} \frac{1}{N^{4d-1}} \sum_{j=M}^{N} j^{4d-2} = \frac{g(M)}{4d-1}\]
for all $M \in \mathbb{N}$ and letting $M \to \infty$, we see that indeed $ E_{12+} = o(N^{4d})$ is true.
Note that for $n \geq 1$
\[\sum_{i} |f(\varepsilon i)v(-n+h+\varepsilon i)| = \sum_{j} |f(n-h+\varepsilon j)v(\varepsilon j)|\]
and
\[\sum_{i'} |v(\varepsilon i') f(-n-h+\varepsilon i')|=\sum_{j'} |v(n+h+\varepsilon j') f(\varepsilon j')|.\]
Hence we can see by the calculuations above that $ E_{12-} =o(N^{4d})$ as well. \\
Note that if $d=\frac{1}{4}$, then $E_{12+}=O(N \log N)$ and if $d <\frac{1}{4}$, then $E_{12+}=O(N)$ and the same holds for $E_{12-}$, which we will use in Section 3.
\end{proof}

\begin{lemma}\label{asymptoticdn} Let all assumptions of Theorem \ref{Theoremfourthmoment} be fulfilled, especially $\E[L_1^4] <\infty$. Then for all $\delta>0$ 
\[\lim_{N \to \infty} \PP[|N^{1-2d}(\bar{d}_{N,h,\varepsilon}-\gamma(h))|>\delta]=0,\]
where $\bar{d}_{N,h,\varepsilon}$ is defined in (\ref{definitionbardn}).
\end{lemma}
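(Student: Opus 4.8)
The strategy is a second--moment argument: the diagonal part $\bar{d}_{N,h,\varepsilon}$ is an average of a stationary sequence with summable covariances, so $\bar{d}_{N,h,\varepsilon}-\gamma(h)$ is of order $N^{-1/2}$, and multiplying by $N^{1-2d}$ still produces a null sequence because $d>\tfrac14$; this is the continuous--time analogue of the way the ``diagonal'' terms $\sum_i\psi_i\psi_{i+h}Z_{t-i}^2$ are controlled in the discrete case. Write $I_k:=[\varepsilon(k-1),\varepsilon k]$ and $Y_t:=\sum_{k\in\Z}\bar{Z}_{k,t,0}\bar{Z}_{k,t,h}$, so that $\bar{d}_{N,h,\varepsilon}=\frac1N\sum_{t=1}^N Y_t$ by (\ref{definitionbardn}). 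First I would compute the mean: by It\^o's isometry $\E[\bar{Z}_{k,t,0}\bar{Z}_{k,t,h}]=\sigma^2\int_{I_k}f(t-s)f(t+h-s)\,ds$, and since $\sum_k\E[|\bar{Z}_{k,t,0}\bar{Z}_{k,t,h}|]<\infty$ (established before (\ref{decompositionautocoariance})) one may sum over $k$ to obtain $\E[Y_t]=\sigma^2\int_\R f(t-s)f(t+h-s)\,ds=\gamma(h)$, so $\E[\bar{d}_{N,h,\varepsilon}]=\gamma(h)$. By Chebyshev's inequality it is then enough to show
\[\Var\big(N^{1-2d}\bar{d}_{N,h,\varepsilon}\big)=N^{-4d}\,\Var\Big(\sum_{t=1}^N Y_t\Big)\longrightarrow 0,\]
and because $d>\tfrac14$ this reduces to establishing the bound $\Var\big(\sum_{t=1}^N Y_t\big)=O(N)$.

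Here $Y_t$ lies in $L^2(\PP)$ and the defining series converges there: the moment formula for stochastic integrals against $L$, available since $\E[L_1^4]<\infty$, gives $\|\bar{Z}_{k,t,0}\bar{Z}_{k,t,h}\|_2\le\|\bar{Z}_{k,t,0}\|_4\,\|\bar{Z}_{k,t,h}\|_4\le C_\varepsilon\sup_{s\in I_k}|f(t-s)|\,\sup_{s\in I_k}|f(t+h-s)|$, whose right--hand side is summable in $k$ by (\ref{basicinequalitywithK}) since $2d-2<-1$. By the stationarity of the increments of $L$ the sequence $(Y_t)_{t\in\Z}$ is stationary, so $\operatorname{Cov}(Y_t,Y_s)$ depends only on $|s-t|$ and $\Var\big(\sum_{t=1}^N Y_t\big)=\sum_{|n|<N}(N-|n|)\operatorname{Cov}(Y_0,Y_n)\le N\sum_{n\in\Z}|\operatorname{Cov}(Y_0,Y_n)|$; the matter thus reduces to $\sum_{n\in\Z}|\operatorname{Cov}(Y_0,Y_n)|<\infty$. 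For $k\neq k'$ the pairs $(\bar{Z}_{k,0,0},\bar{Z}_{k,0,h})$ and $(\bar{Z}_{k',n,0},\bar{Z}_{k',n,h})$ depend on the increments of $L$ over the disjoint intervals $I_k$ and $I_{k'}$, hence are independent, so only the diagonal term survives, and expanding the covariance of products by the moment--cumulant formula (using It\^o's isometry for the pairings) gives
\begin{eqnarray*}
\operatorname{Cov}(Y_0,Y_n)&=&\sum_{k\in\Z}\Big(\sigma^4\int_{I_k}f(-s)f(n-s)\,ds\int_{I_k}f(h-s)f(n+h-s)\,ds\\
&&\qquad{}+\sigma^4\int_{I_k}f(-s)f(n+h-s)\,ds\int_{I_k}f(h-s)f(n-s)\,ds\\
&&\qquad{}+\kappa_4\int_{I_k}f(-s)f(h-s)f(n-s)f(n+h-s)\,ds\Big),
\end{eqnarray*}
where $\kappa_4$ denotes the fourth cumulant of $L_1$.

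It remains to bound the three contributions. Since $f$ vanishes on $(-\infty,0]$, only $k\le0$ contributes, and for such $k$ one has $-s\in[\varepsilon|k|,\varepsilon(|k|+1)]$ and $n-s,\,n+h-s\ge n\ge1$ on $I_k$. For the two pairing terms I would dominate $\int_{I_k}|\cdot|$ by $\varepsilon\sup_{I_k}|\cdot|$ and invoke (\ref{basicinequalitywithK}) in the form $|f(t)|\le K(\max(1,t))^{d-1}$ together with the monotonicity of $t\mapsto(\max(1,t))^{d-1}$; for the $\kappa_4$--term the sum over $k$ telescopes, after the substitution $u=-s$, to $\kappa_4\int_\R f(u)f(h+u)f(n+u)f(n+h+u)\,du$. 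In each case one is left, up to a constant depending on $\varepsilon$ and $h$ and up to finitely many extra terms of size $O(n^{2d-2})$, with a quantity comparable to $\int_c^{\infty}x^{2d-2}(n+x)^{2d-2}\,dx$ for a suitable $c>0$, and splitting this integral at $x=n$ shows it is $O(n^{2d-2})$ (the relevant exponents $2d-2$, $4d-4$ being $<-1$ and $4d-3\le 2d-2$, all because $d<\tfrac12$). Together with $\operatorname{Cov}(Y_0,Y_0)=\Var(Y_0)<\infty$ this yields $|\operatorname{Cov}(Y_0,Y_n)|=O(|n|^{2d-2})$ as $|n|\to\infty$, which is summable since $2d-2<-1$; hence $\Var\big(\sum_{t=1}^N Y_t\big)=O(N)$ and the proof is complete.

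The main difficulty lies in this final estimate: one must use the boundedness of $f$ where the tail bound (\ref{basicinequalitywithK}) is not summable (near the origin) and the genuine $t^{d-1}$--decay elsewhere, so that in every product of two It\^o--isometry integrals both factors decay like $(\varepsilon|k|)^{d-1}$; estimating the two integral factors of such a product separately would only give the rate $O(|n|^{3d-2})$, which fails to be summable once $d\ge\tfrac13$. A minor point is to justify the moment formula for the integrals $\int_{I_k}g\,dL$ and the interchange of covariance and summation, both of which rest on $\E[L_1^4]<\infty$ and the independence and stationarity of the increments of $L$.
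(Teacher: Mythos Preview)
Your proposal is correct and follows essentially the same route as the paper: a Chebyshev/second--moment argument showing $\Var\big(\sum_{t=1}^N Y_t\big)=O(N)$, obtained by reducing to the diagonal in $k$ via independence of the increments over disjoint $I_k$, bounding the resulting terms by $O(|n|^{2d-2})$ through (\ref{basicinequalitywithK}), and concluding from $d>\tfrac14$. The only cosmetic difference is that the paper does not expand $\operatorname{Cov}(\bar{Z}_{k,t,0}\bar{Z}_{k,t,h},\bar{Z}_{k,s,0}\bar{Z}_{k,s,h})$ via the cumulant formula into three terms as you do, but instead applies Cauchy--Schwarz and the fourth--moment bound $\E[\bar{Z}_{k,t,h}^4]\le w(t-\varepsilon k)^2$ with $w(t)=C(t^{2d-2}1_{(1,\infty)}(t)+1_{[-\varepsilon,1]}(t))$, arriving at the same sum $\sum_{t,s}\sum_k w(t-\varepsilon k)w(s-\varepsilon k)=O(N)$.
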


\begin{proof}
Recall $\bar{Z}_{k,t,h} := \int_{\varepsilon (k-1)}^{\varepsilon k} f(t+h-s) \, dL_s$ for $k \in \mathbb{Z}$. Define
\[z_{k,t,0,h} := \sigma^2 \int_{\varepsilon(k-1)}^{\varepsilon k} f(t-s)f(t+h-s) \, ds = \E[\bar{Z}_{k,t,0}\bar{Z}_{k,t,h}], \quad k \in \mathbb{Z},\]
and
\[S_N:=\sum_{t=1}^{N} \xi_t, \quad N \in \mathbb{N},\] with 
\[\xi_t := \sum_{k} (\bar{Z}_{k,t,0}\bar{Z}_{k,t,h}-z_{k,t,0,h}), t \in \mathbb{N}.\]
Note that by the calculations below in the case $N=1$ we see that $\xi_t$ converges unconditionally in $L^2(\PP)$.
Hence by definition $N(\bar{d}_{N,h,\varepsilon}-\gamma(h)) =S_N$. We want to show
\[\lim_{N \to \infty} \PP[|N^{1-2d}(\bar{d}_{N,h,\varepsilon}-\gamma(h))|>\delta]=0.\]
We do this by showing that $\Var(S_N)=\E[S_N^2] = o(N^{4d})$ and using then Chebyshev's inequality. In fact we show $\E[S_N^2] = O(N)$ which we will also need in the proof of Lemma \ref{lemmaforbillingley2}. Since $\bar{Z}_{k,t,0}\bar{Z}_{k,t,h}-z_{k,t,0,h}$ and $\bar{Z}_{l,s,0}\bar{Z}_{l,s,h}-z_{l,s,0,h}$ have expectation zero and are independent for $k \neq l$, we obtain

\begin{eqnarray*}
\E[S_N^2]=\E[\sum_{t,s=1}^{N} \xi_t \xi_s] & = & \E[\sum_{t,s=1}^{N} (\sum_{k} \bar{Z}_{k,t,0}\bar{Z}_{k,t,h}-z_{k,t,0,h})(\sum_{l} \bar{Z}_{l,s,0}\bar{Z}_{l,s,h}-z_{l,s,0,h})]\\
&=&\sum_{t,s=1}^{N} \sum_{k} \E[ (\bar{Z}_{k,t,0}\bar{Z}_{k,t,h}-z_{k,t,0,h}) (\bar{Z}_{k,s,0}\bar{Z}_{k,s,h}-z_{k,s,0,h})]\\
&\leq&\sum_{t,s=1}^{N}  \sum_{k} \sqrt{\E[ (\bar{Z}_{k,t,0}\bar{Z}_{k,t,h}-z_{k,t,0,h})^2]}\sqrt{\E[(\bar{Z}_{k,s,0}\bar{Z}_{k,s,h}-z_{k,s,0,h})^2]}
\end{eqnarray*}
and further
\begin{eqnarray*}
&&\E[ (\bar{Z}_{k,t,0}\bar{Z}_{k,t,h}-z_{k,t,0,h})^2] \\
&=& \Var(\bar{Z}_{k,t,0}\bar{Z}_{k,t,h})\\
&\leq &\E[\bar{Z}_{k,t,0}^2\bar{Z}_{k,t,h}^2]\\
&\leq & \sqrt{\E[\bar{Z}_{k,t,0}^4]} \sqrt{\E[\bar{Z}_{k,t,h}^4]}\\
&\leq & \sqrt{[w(t-\varepsilon k)]^2} \sqrt{[w(t+h-\varepsilon k)]^2}  \leq   [w(t-\varepsilon k)]^2
\end{eqnarray*}
with $w(t):=C(t^{2d-2}1_{(1,\infty)}(t)+  1_{[-\varepsilon,1]}(t))$ with $C$ large enough and the following calculations:

By Lemma 3.2 in Cohen and Lindner \cite{CohenLindner}, we obtain
\[\E[\bar{Z}_{k,t,h}^4]=(\eta-3)\sigma^4 \int_{\varepsilon(k-1)}^{\varepsilon k} f(t+h-s)^4 \, ds + 3 \sigma^4 (\int_{\varepsilon (k-1)}^{\varepsilon k} f(t+h-s)^2 \, ds)^2.\]

If $t +h - \varepsilon k \geq 1$, then
\[\int^{\varepsilon k}_{\varepsilon (k-1)} f(t+h-s)^4 \, ds \leq \varepsilon K^4 (t+h-\varepsilon k)^{4d-4}\]
and
\[\int^{\varepsilon k}_{\varepsilon (k-1)} f(t+h-s)^2 \, ds \leq \varepsilon K^2 (t+h-\varepsilon k)^{2d-2}.\]
If $t +h - \varepsilon k <- \varepsilon$, then the integrals vanish. In the in-between case $-\varepsilon \leq t+h-\varepsilon k < 1$ the integrands are bounded by $K^2$ and $K^4$ respectively.

Hence we need to consider
$\sum_{s,t=1}^{N} \sum_k w(t-\varepsilon k) w(s- \varepsilon k)$. Substituting $\varepsilon i = t - \varepsilon k$, we get
$\sum_{s,t=1}^{N} \sum_i w(\varepsilon i)  w(\varepsilon i +s-t)$.
Adopting the notation $E_{0}$,$E_{+}$ and $E_{-}$  where we split the sum accordingly to the cases $n=s-t=0$, $n=s-t>0$ and $n=s-t<0$, we have $E_+=E_-$ since $\sum_i w(\varepsilon i)  w(\varepsilon i +n) =\sum_i w(\varepsilon i)  w(\varepsilon i -n)$ for $n \in \mathbb{N}$ and
\[E_0 = N \sum_{i} w(\varepsilon i)w(\varepsilon i) = O(N).\]
For $n \geq 1$, we get
\begin{eqnarray*}
&&\sum_i w(\varepsilon i) w(\varepsilon i +n)\\
& \leq & w(-\varepsilon)w(n-\varepsilon) + w(0)w(n) + C^2 \int_{1}^{\infty} (\varepsilon x)^{2d-2} (\varepsilon x + n)^{2d-2} \, dx\\
&=& O(n^{2d-2}) + n^{2d-2} C^2 \int_{1}^{\infty} (\varepsilon x)^{2d-2} (\frac{\varepsilon x}{n} + 1)^{2d-2} \, dx \\
&=& O(n^{2d-2}).
\end{eqnarray*}
Hence we obtain
\begin{eqnarray*}
E_+ &=& O(N \sum_{n=1}^{N} n^{2d-2})\\
&=& O (N^{2d})
\end{eqnarray*}
and
$\E[S_N^2] = O(N)$, since $d<\frac{1}{2}$.
\end{proof}

\section{Theorem for regularly varying tails}
In this section we show that the sample autocovariance of $X_t = \int_{-\infty}^{\infty} f(t-s) \, dL_s$ is asymptotically Rosenblatt or stable distributed (Theorem \ref{asymptoticalstable}), if $L_1$ is regularly varying with index $\alpha \in (2,4)$. This parallels Theorem 3.1 in \cite{HK}. We assume that $f(t)=0$ for $t \leq 0$, $f$ is bounded and $f(t) \sim C_d t^{d-1}$ as $t \to \infty$ with $d \in (0,\frac{1}{2})$ and $C_d>0$. 
Recall that a function $l:(0,\infty) \to (0,\infty)$ is called regularly varying with index $\rho$, if $\lim_{t \to \infty} \frac{l(tx)}{l(t)}=x^{\rho}$ for all $x>0$. We call a random variable $X$ regularly varying  with index $\alpha$, if the tail function $\bar{F}(x):=\PP[|X|>x]$ is regularly varying with index $-\alpha$. We say that $X$ fulfils a tail balance condition, if there is a $p \in [0,1]$ such that
\begin{equation} \lim_{x \to \infty} \frac{\PP[X>x]}{\PP[|X|>x]}=p.\label{tailbalancecondition} \end{equation}
If $X$ is symmetric, then $p$ equals $\frac{1}{2}$.

For $\alpha \in (0,2]$ we denote by $S_{\alpha}(\tau,\beta,\mu)$ an $\alpha$-stable distribution with $\tau \geq 0$ as scale parameter, $\beta \in [-1,1]$ as skewness and $\mu \in \R$ as location parameter, see (1.1.6), p. 9, in \cite{Samo}.

Let $(L_t)_{t \in \R}$ be a two-sided Lévy process. Assume that $L_1$ is regularly varying with index $\alpha \in (2,4)$ and fulfils the tail balance condition (\ref{tailbalancecondition}). Define
\[a_N := \inf\{y: \PP[|L_1|>y]<\frac{1}{N}\} \mbox{ and } b_{N}:= \E[L_{1}^2 1_{\{|L_1|\leq a_N\}}].\] Note that
\[a_N^2=\inf\{y^2: \PP[|L_1|^2>y^2]<\frac{1}{N}\}=\inf\{x: \PP[|L_1|^2>x]<\frac{1}{N}\}.\] Hence by Propositions 2.2.13 and 2.2.14 in \cite{EKM}, there is a stable distribution $S_{\frac{\alpha}{2}}(\tau,\beta,\mu)$ such that 
\begin{equation} \label{stablelimit}
\frac{1}{a_N^2} \sum_{t=1}^{N} ((L_t-L_{t-1})^2- b_{N}) \stackrel{d}{\rightarrow} S_{\frac{\alpha}{2}}(\tau,\beta,\mu).
\end{equation}
By Karamata's theorem, see Theorem 1.6.5 in Bingham et al. \cite{Bingham}, one can show that
\[\lim_{N \to \infty} \frac{N}{a_N^2}(\sigma^2 - b_N) = \frac{\alpha}{\alpha-2}.\]
Hence
\begin{equation} \label{stablelimit2}
\frac{1}{a_N^2} \sum_{t=1}^{N} ((L_t-L_{t-1})^2- \sigma^2) \stackrel{d}{\rightarrow} S_{\frac{\alpha}{2}}(\tau,\beta,\mu -\frac{\alpha}{\alpha-2}).
\end{equation}
Define two Lévy processes 
\begin{equation} \label{stableLevyprocess} (K_s)_{s \in [0,1]} \mbox{ with }K_1 \stackrel{d}{=}S_{\frac{\alpha}{2}}(\tau,\beta,\mu) \end{equation}
and
\begin{equation} \label{stableLevyprocess2} (M_s)_{s \in [0,1]} \mbox{ with }M_s := K_s - s \frac{\alpha}{\alpha-2}, \quad s \in [0,1].\end{equation}

\begin{theorem}\label{asymptoticalstable} Let $(L_t)_{t \in \mathbb{R}}$ be a two-sided Lévy process such that $L_1$ is symmetric about zero and has no Gaussian part and $\Var(L_1)=\sigma^2 \in (0,\infty)$. Assume that $L_1$ is regularly varying with index $\alpha \in (2,4)$. Define a continuous-time moving average process $(X_t)_{t \in \R}$ by \[X_t:=\int_{-\infty}^{\infty} f(t-s)  \, dL_s, \quad t \in \R,\] where we assume that $f(t)=0$ for $t \leq 0$, $f$ is bounded and $f(t) \sim C_d t^{d-1}$ as $t \to \infty$ with $d \in (0,\frac{1}{2})$ and $C_d>0$.  Let $H \in \mathbb{N}_0$. Define $f_m$ as in (\ref{fm}), i.e. \[f_m= \sum_{k=0}^{\infty} f(\varepsilon k) 1_{[k\varepsilon,(k+1)\varepsilon)},\] with $\varepsilon=\frac{1}{m}$. For $h \in \mathbb{N}_0$ define \[G_{h}(s):= \sum_{i=-\infty}^{\infty} f(i+s) f(i+h+s), \quad s \in [0,1],\]
\[G_{m,h}(s):= \sum_{i=-\infty}^{\infty} f_m(i+s) f_m(i+h+s), \quad s \in [0,1]\]
and
\[a_N := \inf\{y: \PP[|L_1|>y]<\frac{1}{N}\}.\]
Let $\hat{\gamma}(h)$ and $\gamma(h)$ denote the sample and actual covariance of $(X_t)_{t \in \mathbb{Z}}$ as in Section 2.

If $\frac{1}{\alpha} > d$ and $G_{m,h}$ converges in $L^{\frac{\alpha}{2}}([0,1])$ to $G_{h}$, then 
\[
\frac{N}{a_N^2} \Big(\hat{\gamma}_N(0)-\gamma(0),\ldots,\hat{\gamma}_N(H)-\gamma(H)\Big) \stackrel{d}{\rightarrow} \Big(\int_0^1 G_{0}(s) \, dM_s,\ldots, \int_0^1 G_{H}(s) \, dM_s \Big)\mbox{ as } N \to \infty,
\]
where the stochastic integrals with respect to $(M_s)_{s \in [0,1]}$ defined in (\ref{stableLevyprocess2}) are defined by convergence in probability, see e.g. Section 3.4 in \cite{Samo}. Observe that $G_h$ is bounded, hence it is in $L^{\frac{\alpha}{2}}([0,1])$ and the stochastic integrals are well-defined. \\
%In the case $d=\frac{1}{4}$, we must additionally assume that \[\lim_{N \to \infty} a_N^{-4} N \ln N = 0.\]
If $\frac{1}{\alpha} < d$, then 
\[N^{1-2d} (\hat{\gamma}_N(0)-\gamma(0),\ldots,\hat{\gamma}_N(H)-\gamma(H)) \stackrel{d}{\rightarrow} C_d^2 \sigma^2 U_d(1)(1,\ldots,1)\mbox{ as } N \to \infty,\]
where $U_d(1)$ is the marginal distribution of the Rosenblatt process at time $1$ defined in (\ref{Rosenblattprocess}).
\end{theorem}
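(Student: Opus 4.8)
The plan is to handle both regimes with a single scheme. Approximate $f$ by the step kernel $f_m$ and $(X_t)$ by $(X_t^{(m)})$; split the sample autocovariance of $X^{(m)}$ into its non-diagonal part $r_{N,h,\varepsilon}$ and its diagonal part $d_{N,h,\varepsilon}$ relative to the $\varepsilon$-grid increments $Z_k$, so that $\hat\gamma_N^{(m)}(h):=\frac1N\sum_{t=1}^N X_t^{(m)}X_{t+h}^{(m)}=r_{N,h,\varepsilon}+d_{N,h,\varepsilon}$ with $\E\hat\gamma_N^{(m)}(h)=\gamma_m(h)$; compare with $\hat\gamma_N(h)=\bar r_{N,h,\varepsilon}+\bar d_{N,h,\varepsilon}$, the analogous split for the true kernel; and pass to the limit via a Billingsley-type argument, first letting $N\to\infty$ with $m$ fixed, then $m\to\infty$. (Plain Slutsky, as in Section~2, no longer works because the bias $\gamma_m(h)-\gamma(h)$ is nonzero for fixed $m$.) The heuristic is that $r_{N,h,\varepsilon}$ carries the Rosenblatt limit, $d_{N,h,\varepsilon}$ --- a weighted sum of the heavy-tailed squares $Z_k^2$ --- carries the stable limit, and $d=\frac1\alpha$ is exactly their balance point.

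Fix $m$ and set $W_{N,m}=N^{1-2d}(\hat\gamma_N^{(m)}(h)-\gamma_m(h))$ if $d>\frac1\alpha$, resp.\ $\frac{N}{a_N^2}(\hat\gamma_N^{(m)}(h)-\gamma_m(h))$ if $d<\frac1\alpha$. For the non-diagonal part: when $d>\frac1\alpha$ (so $d>\frac14$) Lemma~\ref{nondiagonalparttoRosenblatt} gives $N^{1-2d}(r_{N,0,\varepsilon},\dots,r_{N,H,\varepsilon})\stackrel{d}{\rightarrow}C_d^2\sigma^2 U_d(1)(1,\dots,1)$; when $d<\frac1\alpha$ a direct variance bound $\E[(Nr_{N,h,\varepsilon})^2]=o(a_N^4)$ (using $\alpha<4$ and $d<\frac1\alpha$) shows $\frac{N}{a_N^2}r_{N,h,\varepsilon}\to0$ in probability. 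For the diagonal part, write $d_{N,h,\varepsilon}-\gamma_m(h)=\frac1N\sum_n c_{n,N}(Z_n^2-\varepsilon\sigma^2)$, a weighted sum of i.i.d.\ centred summands with $Z_n^2$ regularly varying of index $\frac\alpha2\in(1,2)$ and with bounded weights $c_{n,N}$ that converge, as $N\to\infty$, to the values of $G_{m,h}$ (determined by $n$ modulo $m$). When $d<\frac1\alpha$, a stable limit theorem for such weighted sums yields $\frac{N}{a_N^2}(d_{N,h,\varepsilon}-\gamma_m(h))\stackrel{d}{\rightarrow}\int_0^1 G_{m,h}(s)\,dM_s$, the stable parameters and the drift being identified via (\ref{stablelimit})--(\ref{stablelimit2}) and the symmetry of $L_1$ being used (Remark~\ref{replacingcentringsequence}) to pass from the truncated-second-moment centring to $\sigma^2$; when $d>\frac1\alpha$ the diagonal part is negligible, since $N(d_{N,h,\varepsilon}-\gamma_m(h))$ is of order $a_N^2=o(N^{2d})$ by the classical stable CLT. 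Hence $W_{N,m}\stackrel{d}{\rightarrow}W_m$ as $N\to\infty$, with $W_m=C_d^2\sigma^2 U_d(1)(1,\dots,1)$ when $d>\frac1\alpha$ and $W_m=\big(\int_0^1 G_{m,h}(s)\,dM_s\big)_{0\le h\le H}$ when $d<\frac1\alpha$.

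Letting $m\to\infty$: nothing is needed when $d>\frac1\alpha$, and when $d<\frac1\alpha$ the hypothesis $G_{m,h}\to G_h$ in $L^{\alpha/2}([0,1])$ together with continuity of $\phi\mapsto\int_0^1\phi\,dM_s$ on $L^{\alpha/2}([0,1])$ (recall $L^{\alpha/2}([0,1])\subseteq L^1([0,1])$ and $M$ is $\frac\alpha2$-stable) gives $W_m\stackrel{d}{\rightarrow}\big(\int_0^1 G_h(s)\,dM_s\big)_{0\le h\le H}$. It remains to verify, for $W_N:=N^{1-2d}(\hat\gamma_N(h)-\gamma(h))$, resp.\ $\frac{N}{a_N^2}(\hat\gamma_N(h)-\gamma(h))$, that $\lim_{m\to\infty}\limsup_{N\to\infty}\PP[|W_N-W_{N,m}|>\delta]=0$ for all $\delta>0$. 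Writing $W_N-W_{N,m}$ as the pertinent normalisation of $(\bar r_{N,h,\varepsilon}-r_{N,h,\varepsilon})+\big((\bar d_{N,h,\varepsilon}-d_{N,h,\varepsilon})-\E(\bar d_{N,h,\varepsilon}-d_{N,h,\varepsilon})\big)$, the first term tends to $0$ in probability for each fixed $m$ by Lemma~\ref{asymptoticrn} (its bounds, with $\alpha<4$, cover both normalisations), while the second --- a centred sum, hence free of the bias $\gamma_m(h)-\gamma(h)$ that would diverge under the normalisation --- is controlled uniformly in $N$ as $m\to\infty$ by a dedicated estimate (the purpose of Lemma~\ref{lemmaforbillingley2}), obtained by truncating the increments $Z_k$ (the bounded part via second-moment computations, available since $\E[L_1^2]<\infty$, the heavy-tailed part via point-process arguments) together with $\|f-f_m\|_{L^2(\R)}\to0$. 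Billingsley's approximation theorem then gives $W_N\stackrel{d}{\rightarrow}W$ in each case, and the Cram\'er--Wold device upgrades this to the joint statement, every coordinate of the limit being a fixed linear functional of one object --- $U_d(1)$ or $M$.

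The two main obstacles are the stable limit theorem for the diagonal part, where the subsampling to the integer grid, the passage between increments of $L$ of lengths $\varepsilon$ and $1$, the drift correction and the use of symmetry must be arranged simultaneously, and the uniform-in-$N$ control of the diagonal approximation error, which cannot invoke the fourth-moment computations of Section~2 and instead calls for a truncation argument in the spirit of the proof of Theorem~3.1 in \cite{HK}.
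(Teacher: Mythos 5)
Your proposal follows essentially the same route as the paper: approximate $f$ by the step kernel $f_m$, split the sample autocovariance into diagonal and non-diagonal parts, obtain the stable limit for the diagonal part via a Davis--Resnick weighted-sum argument after relating the $\varepsilon$-increments of $L$ to the unit increments, and close with a Billingsley approximation argument whose key estimate (truncation of $L$ at level $a_N$, second moments for the small-jump part, first-moment/tail bounds for the large-jump part) is exactly the content of Lemma \ref{lemmaforbillingley2}. The only, harmless, reorganisations are that you absorb the drift correction $K \mapsto M$ into the diagonal part's limit by centring at $\varepsilon\sigma^2$ instead of $\varepsilon b_N$ (the paper converts the centring only at the very end via Karamata), and that you dispose of the non-diagonal part in the stable regime by a direct variance bound $\E[(Nr_{N,h,\varepsilon})^2]=o(a_N^4)$ rather than via the Rosenblatt lemma and Slutsky, which is in fact the cleaner route when $d\leq\frac14$.
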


\begin{remark} 1. The assumption that the sequence $G_{m,h}$ converges in $L^{\frac{\alpha}{2}}([0,1])$ to $G_{h}$ is for example fulfilled if $f$ is left-continuous. The sequence $f_m$ then converges pointwise to $f$. We can assume that $f_m$ is  bounded by $K(1_{[0,1]}(t)+t^{d-1}1_{(1,\infty)}(t))$ for $K$ large enough. Hence by Lebesgue's convergence theorem $G_{m,h}$ converges to $G_h$ pointwise, since $K^2(1+\sum_{t=1}^{\infty} t^{2(d-1)})$ is finite. Since the sequence $G_{m,h}$ is bounded by $K^2(1+\sum_{t=1}^{\infty} t^{2(d-1)})$, it converges by using Lebesgue's convergence theorem a second time in $L^{\alpha/2}([0,1])$ as well.
2. The assumption that $G_{m,h}$ converges in $L^{\frac{\alpha}{2}}([0,1])$ to $G_{h}$ can also be weakened  by assuming that $f$ coincides with a left-continuous function, say $\tilde{f}$, apart from a Lebesgue nullset. For then the processes $(X_t)_{t \in \R}$ based on $f$ and $\tilde{f}$ coincide, as do the quantities $G_h$ based on  $f$ and $\tilde{f}$ in $L^{\frac{\alpha}{2}}([0,1])$. Hence by proving the theorem for $\tilde{f}$, it also holds for $f$.\end{remark}

\begin{remark}
Note that Theorem \ref{asymptoticalstable} holds as well if we assume that $\E[L_1]=0$, $L_1$ fulfils the tail balance condition (\ref{tailbalancecondition}), $L$ has no Gaussian part and $\E[L_1^{\leq,a_N}]=0$ for all $N \in \mathbb{N}$ as we point out in Remark \ref{replacingcentringsequence} where also $L_1^{\leq,a_N}$ is defined.
\end{remark}

\begin{remark}
If $\frac{1}{\alpha}=d$, then we do not know completely what happens. $(a_N)_{N \in \mathbb{N}}$ is a regularly varying sequence with index $1/\alpha$, i.e. $a_N=N^{\frac{1}{\alpha}} l(N)$ for a slowly varying function $l$. If $\lim_{N \to \infty} l(N)=\infty$, then the limit is a stable distribution. If $\lim_{N \to \infty} l(N)=0$, then the limit is a Rosenblatt distribution. We do not know anything about the joint convergence of $r_{N,h,\varepsilon}$ and $d_{N,h,\varepsilon}$ defined in (\ref{definitionrn}) and in (\ref{definitiondn}). That is why we do not know what happens in the cases if $\lim_{N \to \infty} l(N) \in (0,\infty)$ or if the limit does not exist.
\end{remark}

The rest of this section is devoted to the proof of Theorem \ref{asymptoticalstable}.
As in section 2, we approximate $f$ by $f_m$ and consequently approximate $(X_t)_{t \in \mathbb{R}}$ by $(X_t^{(m)})_{t \in \R}$ defined by
\[X_t^{(m)}:=\int_{-\infty}^{\infty} f_m(t-s) \, dL_s = \sum_{k=0}^{\infty} f(\varepsilon k) (L_{t-\varepsilon k}-L_{t-\varepsilon(k+1)})=\sum_{k=0}^{\infty} f(\varepsilon k) Z_{mt-k}, \quad t \in \mathbb{Z}.\]

We split the autocovariance function again in diagonal and non-diagonal parts. The first step of the proof is to show that the squares of the smaller increments $(L_{t-\varepsilon k}-L_{t-\varepsilon(k+1)})$ are in the domain of attraction of the $m$-th convolution root of $S_{\frac{\alpha}{2}}(\tau,\beta,\mu)$. To this end, we need the following lemma:

\begin{lemma}\label{ratioofnormingsequences}
Let $(L_t)_{t \in \mathbb{R}}$ be a two-sided Lévy process. Assume that $L_1$ is regularly varying with index $\alpha \in (0,\infty)$ and fulfils the tail balance condition (\ref{tailbalancecondition}) with $p \in [0,1]$. Then $L_{\varepsilon}$ is also regularly varying with index $\alpha$ and fulfils (\ref{tailbalancecondition}) with $p$ as well. If we define the norming sequences $(a_N)_{N \in \mathbb{N}}$ and $(c_N)_{N \in \mathbb{N}}$ by \[a_N := \inf\{y: \PP[|L_1|>y]<\frac{1}{N}\} \mbox{ and } c_N := \inf\{y: \PP[|L_{\varepsilon}|>y]<\frac{1}{N}\},\] then \[\lim_{N \to \infty} \frac{a_N}{c_N} = m^{1/\alpha}.\] 
\end{lemma}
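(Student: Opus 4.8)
The plan is to deduce both conclusions — that $L_\varepsilon$ is again regularly varying with index $\alpha$ and satisfies the tail balance condition with the same $p$, and that $a_N/c_N\to m^{1/\alpha}$ — from the single tail estimate
\[\PP[|L_\varepsilon|>x]\sim\tfrac1m\PP[|L_1|>x]\quad\text{as }x\to\infty,\]
together with its one-sided versions $\PP[L_\varepsilon>x]\sim\tfrac1m\PP[L_1>x]$ and $\PP[L_\varepsilon\le-x]\sim\tfrac1m\PP[L_1\le-x]$.

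\emph{Obtaining the tail estimate.} First I would pass to the L\'evy measure. Writing $\nu$ for the L\'evy measure of $L_1$, the fact that we have a L\'evy process means that $L_\varepsilon$ is infinitely divisible with L\'evy measure $\varepsilon\nu=\tfrac1m\nu$. I would then use the well-known characterisation of regular variation for infinitely divisible laws: an infinitely divisible distribution has a regularly varying right tail of index $-\alpha<0$ if and only if the right tail $x\mapsto\nu((x,\infty))$ of its L\'evy measure is regularly varying of index $-\alpha$, and in that case the two tails are asymptotically equivalent (analogously for the left tail). Applying this to $L_1$ gives $\PP[L_1>x]\sim\nu((x,\infty))$ and $\PP[L_1\le-x]\sim\nu((-\infty,-x))$, with both L\'evy tails regularly varying of index $-\alpha$ and in the asymptotic ratio $p:(1-p)$; applying it to $L_\varepsilon$ gives $\PP[L_\varepsilon>x]\sim\tfrac1m\nu((x,\infty))\sim\tfrac1m\PP[L_1>x]$ and likewise for the left tail. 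Summing the two one-sided tails yields the displayed estimate; since both one-sided tails of $L_\varepsilon$ are the corresponding tails of $L_1$ scaled by the same constant $\tfrac1m$, the limit in (\ref{tailbalancecondition}) for $L_\varepsilon$ is again $p$, and $\PP[|L_\varepsilon|>\cdot]$ is regularly varying of index $-\alpha$. Alternatively one could invoke the closure of the class of regularly varying distributions under convolution roots, since $L_1$ is the sum of $m$ i.i.d.\ copies of $L_\varepsilon$.

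\emph{From the tails to the norming sequences.} Set $\bar F(y):=\PP[|L_1|>y]$ and $\bar G(y):=\PP[|L_\varepsilon|>y]$, so $\bar G\sim\tfrac1m\bar F$, both regularly varying of index $-\alpha$. By the standard behaviour of such quantile sequences (cf.\ Propositions 2.2.13 and 2.2.14 in \cite{EKM}) one has $N\bar F(a_N)\to1$ and $N\bar G(c_N)\to1$; combining with $\bar G\sim\tfrac1m\bar F$ gives $N\bar F(c_N)=N\bar G(c_N)\cdot\bar F(c_N)/\bar G(c_N)\to m$. Hence $\bar F(c_N)/\bar F(a_N)\to m$, and since $\bar F(a_N),\bar F(c_N)\to0$ we have $a_N,c_N\to\infty$. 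Putting $r_N:=c_N/a_N$, Potter's bounds for $\bar F$ (Bingham et al.\ \cite{Bingham}) show that $r_N$ stays in a compact subset of $(0,\infty)$, for otherwise $\bar F(r_N a_N)/\bar F(a_N)$ would tend to $0$ or to $\infty$. Along any subsequence with $r_{N_k}\to r\in(0,\infty)$, the uniform convergence theorem for regularly varying functions gives $\bar F(r_{N_k}a_{N_k})/\bar F(a_{N_k})\to r^{-\alpha}$, so $r^{-\alpha}=m$, i.e.\ $r=m^{-1/\alpha}$. As this is the only possible subsequential limit of the bounded sequence $r_N$, we obtain $r_N\to m^{-1/\alpha}$, that is, $a_N/c_N\to m^{1/\alpha}$.

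\emph{Main obstacle.} The substantive point is the first step: transferring regular variation, and the linear scaling of the tails, between an infinitely divisible law and its L\'evy measure — equivalently, through convolution roots. Granting the tail estimate, the passage to the norming sequences is routine regular-variation bookkeeping relying only on results already cited in the paper.
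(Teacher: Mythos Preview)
Your proposal is correct and follows essentially the same approach as the paper: both first establish the tail equivalence $\PP[|L_\varepsilon|>x]\sim\tfrac1m\PP[|L_1|>x]$ via the relation between an infinitely divisible law and its L\'evy measure (the paper cites Hult--Lindskog \cite{HultLindskog}, Proposition~3.1, for this), and then transfer this to the norming sequences. The only difference is cosmetic: for the second step the paper invokes Resnick \cite{Resnick}, Proposition~2.6~(vi), on asymptotic inverses of regularly varying functions, whereas you reprove that fact directly via Potter's bounds and the uniform convergence theorem --- the underlying argument is the same.
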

\begin{proof}
It is well-known, cf. e.g. Hult and Lindskog \cite{HultLindskog} Proposition 3.1, that an infinitely divisible distribution fulfilling a tail balance condition and its Lévy measure have the same behaviour as regularly varying measures. Note that this result originally goes back to Embrechts et al. \cite{Embrechts} for subexponential measures on $(0,\infty)$. Hence, since the Lévy measure of $L_{\varepsilon}$ is $\varepsilon$ times the Lévy measure of $L_1$, we see that $L_{\varepsilon}$ is regularly varying with index $\alpha$ and that it satisfies the same tail balance condition. Now define $G(x):=\PP[|L_1|>x]$ and $H(x):=\PP[|L_{\varepsilon}|>x]$. By the subexponentiality of the tails, see \cite{Fasen} section 1.1.1, we know that $\lim_{x \to \infty} \frac{G(x)}{H(x)}=m$. We set $U_1:=\frac{1}{G}$ and $U_2:=\frac{1}{H}$. $U_1$ and $U_2$ fulfil the conditions of Proposition 2.6 vi) of \cite{Resnick}. Hence we obtain $U_1^{\leftarrow}(x) \sim  m^{1/\alpha} U_2^{\leftarrow}(x)$ as $x \to \infty$, where $U_1^{\leftarrow}$ denotes the left-continuous inverse of $U_1$, which is defined by
\[U_1^{\leftarrow}(x) := \inf\{s:U_1(s) \geq x\} =\inf\{s: G(s) \leq \frac{1}{x}\}.\]
Hence because of $\frac{a_{N-1}}{a_N} \leq \frac{U_1^{\leftarrow}(N)}{a_N} \leq 1$, we see $a_N \sim U_1^{\leftarrow}(N)$ as $N \to \infty$. We see $c_N \sim U_2^{\leftarrow}(N)$ in the same manner.
This shows that $\lim_{N \to \infty} \frac{a_N}{c_N} = m^{1/\alpha}$.
\end{proof}

Now we have the following lemma:

\begin{lemma}\label{lemmaformixedterms}
Let $(L_t)_{t \in \R}$ be a two-sided Lévy process.  Assume that $L_1$ is regularly varying with index $\alpha \in (0,4)$ and fulfils the tail balance condition (\ref{tailbalancecondition}). Let $(a_N)_{N \in \mathbb{N}}$ be defined by $a_N := \inf\{y: \PP[|L_1|>y]<\frac{1}{N}\}$ and define $b_{N}:= \E[L_{1}^2 1_{\{|L_1|\leq a_N\}}]$ such that
\[\frac{1}{a_N^2} \sum_{t=1}^{N} ((L_t-L_{t-1})^2 -b_N) \stackrel{d}{\rightarrow} S_{\alpha/2}(\tau,\beta,\mu)\]
for constants $\tau, \beta$ and $\mu$.

Then
\[\frac{1}{a_N^2} \sum_{t=1}^{N} ((L_t-L_{t-\varepsilon})^2- \varepsilon b_{N}) \stackrel{d}{\rightarrow} S_{\frac{\alpha}{2}}(\varepsilon^{2/\alpha} \tau, \beta, \varepsilon \mu).\]
\end{lemma}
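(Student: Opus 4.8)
The plan is to transfer the stable limit known for the unit increments $L_t-L_{t-1}$ to the smaller increments $L_t-L_{t-\varepsilon}$ and then to correct for the two resulting discrepancies, the norming sequence and the centring sequence.

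First I would observe that, since $\varepsilon=1/m\le 1$, the intervals $(t-\varepsilon,t]$ for $t\in\N$ are pairwise disjoint, so $(L_t-L_{t-\varepsilon})_{t\in\N}$ is i.i.d.\ with the law of $L_\varepsilon$. By Lemma~\ref{ratioofnormingsequences}, $L_\varepsilon$ is regularly varying with index $\alpha$ and fulfils the same tail balance condition, and, with $c_N:=\inf\{y:\PP[|L_\varepsilon|>y]<1/N\}$, one has $c_N^2/a_N^2\to m^{-2/\alpha}=\varepsilon^{2/\alpha}$. Applying the theorem used for (\ref{stablelimit}) (Propositions 2.2.13 and 2.2.14 in \cite{EKM}) to the i.i.d.\ sequence $\big((L_t-L_{t-\varepsilon})^2\big)_{t}$, with its canonical norming $c_N^2$ and centring $d_N:=\E[L_\varepsilon^2\,1_{\{|L_\varepsilon|\le c_N\}}]$, yields
\[
\frac{1}{c_N^2}\sum_{t=1}^{N}\big((L_t-L_{t-\varepsilon})^2-d_N\big)\stackrel{d}{\rightarrow}S_{\alpha/2}(\tau,\beta,\mu),
\]
with the \emph{same} parameters $(\tau,\beta,\mu)$ as in (\ref{stablelimit}): a nonnegative regularly varying random variable of index $\alpha/2$ is maximally skewed to the right, so $\beta=1$, and under the canonical norming and the truncated-moment centring the scale and the location produced by that theorem depend only on $\alpha$. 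Justifying this universality carefully is the first point I expect to cost some work.

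Next I would split
\[
\frac{1}{a_N^2}\sum_{t=1}^{N}\big((L_t-L_{t-\varepsilon})^2-\varepsilon b_N\big)=\frac{c_N^2}{a_N^2}\cdot\frac{1}{c_N^2}\sum_{t=1}^{N}\big((L_t-L_{t-\varepsilon})^2-d_N\big)+\frac{N(d_N-\varepsilon b_N)}{a_N^2}.
\]
By Slutsky's lemma and the scaling property of stable laws (valid since $\alpha\ne2$, which holds in the application) the first term converges in distribution to $\varepsilon^{2/\alpha}S_{\alpha/2}(\tau,\beta,\mu)\stackrel{d}{=}S_{\alpha/2}\big(\varepsilon^{2/\alpha}\tau,\beta,\varepsilon^{2/\alpha}\mu\big)$. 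The deterministic second term I would handle with Karamata's theorem, exactly as in the derivation of (\ref{stablelimit2}): for $\alpha\in(2,4)$, using $\E[L_\varepsilon^2]=\varepsilon\E[L_1^2]$ (here $\E[L_1]=0$), one has $d_N-\varepsilon b_N=\varepsilon\,\E[L_1^2\,1_{\{|L_1|>a_N\}}]-\E[L_\varepsilon^2\,1_{\{|L_\varepsilon|>c_N\}}]$, and from $\frac{N}{a_N^2}\E[L_1^2\,1_{\{|L_1|>a_N\}}]\to\frac{\alpha}{\alpha-2}$ together with the analogous statement for $L_\varepsilon$ (same index, canonical norming $c_N$) it follows that $\frac{N(d_N-\varepsilon b_N)}{a_N^2}\to\frac{\alpha}{\alpha-2}\big(\varepsilon-\varepsilon^{2/\alpha}\big)$; for $\alpha\in(0,2)$ the same limit drops out of Karamata applied to the (growing) truncated second moments $b_N$ and $d_N$.

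Combining the two pieces by Slutsky's lemma, the limit is $S_{\alpha/2}\big(\varepsilon^{2/\alpha}\tau,\beta,\varepsilon^{2/\alpha}\mu+\frac{\alpha}{\alpha-2}(\varepsilon-\varepsilon^{2/\alpha})\big)$, so it remains to recognise that $\mu=\frac{\alpha}{\alpha-2}$. This is the second delicate point: the $\sigma^2$-centred sum in (\ref{stablelimit2}) is a normalised sum of i.i.d.\ \emph{centred} random variables, so its stable limit has location zero — decompose the sum into a truncated part and a large-jumps part, each of which is centred and has a mean-zero stable limit — and therefore $\mu-\frac{\alpha}{\alpha-2}=0$. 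Substituting $\mu=\frac{\alpha}{\alpha-2}$ gives $\varepsilon^{2/\alpha}\mu+\frac{\alpha}{\alpha-2}(\varepsilon-\varepsilon^{2/\alpha})=\frac{\alpha}{\alpha-2}\varepsilon=\varepsilon\mu$, which is the assertion. In short, the norming-ratio step and the Karamata bookkeeping are routine; the genuine obstacles are the universality of the canonically normed/centred stable limit — so that the $L_\varepsilon^2$-sum inherits the same $\tau$, $\beta$, $\mu$ as the $L_1^2$-sum — and the identification $\mu=\frac{\alpha}{\alpha-2}$.
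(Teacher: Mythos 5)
Your proposal is correct, and the first half of it --- passing from the canonical pair $(c_N^2,d_N)$ for $L_\varepsilon$ to $(a_N^2,\varepsilon b_N)$ via Lemma \ref{ratioofnormingsequences} and the Karamata bookkeeping $\frac{N}{a_N^2}(d_N-\varepsilon b_N)\to\frac{\alpha}{\alpha-2}(\varepsilon-\varepsilon^{2/\alpha})$ --- is exactly what the paper does. Where you genuinely diverge is in pinning down the parameters of the limit. The paper never invokes universality of the canonically normed and centred stable limit and never computes $\mu$: it writes $(L_t-L_{t-1})^2=\bigl(\sum_{i=1}^m(L_{t+\varepsilon i}-L_{t+\varepsilon(i-1)})\bigr)^2$, kills the normalised cross terms in probability using Lemma 4.1 of \cite{Jessen}, and concludes that the (already established) stable limit $S$ of the $\varepsilon$-increment sum satisfies $S^{m\ast}=S_{\alpha/2}(\tau,\beta,\mu)$; the convolution property of stable laws then forces $S=S_{\alpha/2}(\varepsilon^{2/\alpha}\tau,\beta,\varepsilon\mu)$ in one stroke. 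You instead lean on two external facts: (i) the limit in Propositions 2.2.13--2.2.14 of \cite{EKM}, under canonical norming and truncated-moment centring, depends only on the index $\alpha/2$ and the tail balance $p=1$, so the $L_\varepsilon^2$-sum inherits the \emph{same} $(\tau,\beta,\mu)$; and (ii) $\mu=\frac{\alpha}{\alpha-2}$, read off from the mean-zero property of the $\sigma^2$-centred sum. Both are true for $\alpha\in(2,4)$ (the range actually used; (ii) needs that the location parameter of an $\alpha/2$-stable law with $\alpha/2>1$ is its mean, plus a uniform-integrability argument to carry the zero mean to the limit, and does not extend below $\alpha=2$), so your route closes, and it has the advantage of dispensing with the cross-term analysis altogether. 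What the paper's convolution-root argument buys is that the identification step is self-contained: it requires no knowledge of $\mu$ and no appeal to the exact form of the limit in \cite{EKM}, and is therefore insensitive to the parametrisation conventions of \cite{EKM} versus \cite{Samo}.
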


\begin{proof}
Define $c_N := \inf\{y: \PP[|L_{\varepsilon}|>y]<\frac{1}{N}\}$ and $d_N:=\E[L_{\varepsilon}^2 1_{\{|L_{\varepsilon}|\leq c_N\}}]$. Then as in (\ref{stablelimit}), we have
\[\frac{1}{c_N^2} \sum_{t=1}^{N} ((L_t-L_{t-\varepsilon})^2- d_{N}) \stackrel{d}{\rightarrow} T\]
for a stable law $T$. By Karamata's Theorem, we obtain $\lim_{N \to \infty} \frac{N}{a_N^2} (\sigma^2 - b_N) = \frac{\alpha}{\alpha -2}$ and $\lim_{N \to \infty} \frac{N}{c_N^2} (\varepsilon \sigma^2 - d_N) = \frac{\alpha}{\alpha -2}$. Hence by Lemma \ref{ratioofnormingsequences}, $\lim_{N \to \infty} \frac{N}{c_N^2} (\varepsilon \sigma^2 - \varepsilon b_N) = \varepsilon^{1-\frac{2}{\alpha}} \frac{\alpha}{\alpha -2}$ and hence in turn $\lim_{N \to \infty} \frac{N}{c_N^2} (\varepsilon b_N - d_N) = (1- \varepsilon^{1-\frac{2}{\alpha}}) \frac{\alpha}{\alpha -2}$. 
Hence we can replace $(d_N)_{N \in \mathbb{N}}$ by $(\varepsilon b_N)_{N \in \mathbb{N}}$ and still obtain a stable limit. By Lemma \ref{ratioofnormingsequences} we can replace $(c_N)_{N \in \mathbb{N}}$ by $(a_N)_{N \in \mathbb{N}}$ and obtain a stable limit as well. We conclude that
\[\frac{1}{a_N^2} \sum_{t=1}^{N} ((L_t-L_{t-\varepsilon})^2 - \varepsilon b_N) \stackrel{d}{\rightarrow} S\]
for a stable law $S$. We now show that $S=S_{\frac{\alpha}{2}}(\varepsilon^{2/\alpha} \tau, \beta, \varepsilon \mu)$.

We split the normed partial sums in the following manner:
\begin{eqnarray*}
\frac{1}{a_N^2} \sum_{t=1}^{N} ((L_t-L_{t-1})^2 -b_N)& = & \frac{1}{a_N^2} \sum_{t=1}^{N} \Big(\Big( \sum_{i=1}^{m} (L_{t+\varepsilon i}-L_{t+\varepsilon (i-1)})\Big)^2 - b_N \Big)\\
&=&\underbrace{\sum_{i=1}^{m} \underbrace{\frac{1}{a_N^2}  \sum_{t=1}^{N} \Big((L_{t+\varepsilon i}-L_{t+\varepsilon (i-1)})^2 - \varepsilon b_N \Big)}_{\rightarrow  \mbox{stable law } S}}_{\rightarrow S^{m \ast}} \\
&&+ \sum_{i,j=1 i \neq j}^{m} \underbrace{\frac{1}{a_N^2}  \sum_{t=1}^{N} (L_{t+\varepsilon i}-L_{t+\varepsilon (i-1)})(L_{t+\varepsilon j}-L_{t+\varepsilon(j-1)})}_{\stackrel{\PP}{\rightarrow} 0},
\end{eqnarray*}
where the convergences are justified as follows:
since we sum up random variables of independent sequences, the sum converges towards the convolution of the distributional limits. By Lemma 4.1 of \cite{Jessen}, $|(L_{t+\varepsilon i}-L_{t+\varepsilon (i-1)})(L_{t+\varepsilon j}-L_{t+\varepsilon(j-1)})|$ is regularly varying with index $\alpha$ for $i \neq j$, hence \[\frac{1}{a_N^2}  \sum_{t=1}^{N} |(L_{t+\varepsilon i}-L_{t+\varepsilon (i-1)})(L_{t+\varepsilon j}-L_{t+\varepsilon(j-1)})|, \quad i \neq j,\] converges in probability to zero by Lemma \ref{ratioofnormingsequences} and hence so does \[\frac{1}{a_N^2}  \sum_{t=1}^{N} (L_{t+\varepsilon i}-L_{t+\varepsilon (i-1)})(L_{t+\varepsilon j}-L_{t+\varepsilon(j-1)}), \quad i \neq j.\] Now it is obvious (see e.g. \cite{Samo}, Property 1.2.1, p.10) that $S= S_{\frac{\alpha}{2}}(\varepsilon^{2/\alpha} \tau, \beta, \varepsilon \mu)$. 
\end{proof}

We now show that the diagonal parts of the autocovariance function of the approximated process $(X_t^{(m)})_{t \in \R}$ converge to a stable law expressed as a stochastic integral with respect to a stable Lévy process.

\begin{lemma} \label{diagonalparttostable}
Let $(L_t)_{t \in \R}$ be a two-sided Lévy process. Assume that $L_1$ is regularly varying with index $\alpha \in (2,4)$ and fulfils the tail balance condition (\ref{tailbalancecondition}). Let $f$ and $f_m$ be as in Theorem \ref{asymptoticalstable}. Define \[a_N := \inf\{y: \PP[|L_1|>y]<\frac{1}{N}\} \mbox{ and } b_{N}:= \E[L_{1}^2 1_{\{|L_1|\leq a_N\}}].\]
Define \[G_{m,h}(s):= \sum_{i=-\infty}^{\infty} f_m(i+s) f_m(i+h+s), \quad s \in [0,1]\]
and
\begin{equation} \label{definitiondn} d_{N,h,\varepsilon} := \frac{1}{N} \sum_{t=1}^{N} \sum_{i=0}^{\infty} f(\varepsilon i) f(\varepsilon i + h) ((Z_{mt-i})^2 - \varepsilon b_{N}),\end{equation}
where $(Z_i)_{i \in \mathbb{Z}}$ was defined in (\ref{increments}). Then $d_{N,h,\varepsilon}$ converges absolutely almost surely and in $L^1(\PP)$.
Further, \[\frac{N}{a_N^{2}} (d_{N,0,\varepsilon},\ldots,d_{N,H,\varepsilon}) \stackrel{d}{\rightarrow} (\int_0^1 G_{m,0}(s) \, dK_s,\ldots, \int_0^1 G_{m,H}(s) \, dK_s),\]
where $(K_s)_{s \in [0,1]}$ is defined in (\ref{stableLevyprocess}). Observe that $G_{m,h}$ is bounded, hence it is in $L^{\frac{\alpha}{2}}([0,1])$ and the stochastic integrals are well-defined.
\end{lemma}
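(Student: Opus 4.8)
The plan is to rewrite the assertion as a stable limit theorem for a weighted sum of the i.i.d.\ squared increments $Z_k^2$. The first claim is routine: since $|f(t)|\le K\max(1,t)^{d-1}$ with $d<\tfrac12$, the sequence $\bigl(f(\varepsilon i)f(\varepsilon i+h)\bigr)_{i\ge0}$ is absolutely summable and $\E[|Z_{mt-i}^2-\varepsilon b_N|]\le\varepsilon\sigma^2+\varepsilon b_N<\infty$, so by Tonelli the sum of absolute values defining $d_{N,h,\varepsilon}$ has finite expectation; hence the series converges absolutely a.s.\ and, by dominated convergence, in $L^1(\PP)$. Using the substitution $k=mt-i$ and that $f$ vanishes on $(-\infty,0]$, this also justifies the rearrangement
\[N\,d_{N,h,\varepsilon}=\sum_{k\in\mathbb{Z}}c_{N,h}(k)\,(Z_k^2-\varepsilon b_N),\qquad c_{N,h}(k):=\sum_{t=1}^{N}f(t-\varepsilon k)\,f(t+h-\varepsilon k),\]
where $c_{N,h}(k)=0$ for $k\ge mN$. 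I would then prove the elementary estimates (all from $|f(t)|\le K\max(1,t)^{d-1}$, $2d-2<-1$, $2d-1\in(-1,0)$, and comparison of sums with integrals): (i) $\sup_{k}|c_{N,h}(k)|\le C$; (ii) $\sum_{k\le0}|c_{N,h}(k)|=O(N^{2d})$; (iii) for $1\le k\le mN$ one has $c_{N,h}(k)=\psi_h(\{\varepsilon k\})+e_{N,h}(k)$, where $\psi_h(x):=\sum_{v\ge1}f(v-x)f(v+h-x)$ is bounded and $1$-periodic (with $\{\varepsilon k\}=\varepsilon\,(k\bmod m)$) and $e_{N,h}(k)=-\sum_{v>N-\lfloor\varepsilon k\rfloor}f(v-\{\varepsilon k\})f(v+h-\{\varepsilon k\})$ satisfies $|e_{N,h}(k)|\le C\max(1,N-\lfloor\varepsilon k\rfloor)^{2d-1}$ and $\sum_{1\le k\le mN}|e_{N,h}(k)|=O(N^{2d})$.

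Write $\tfrac{N}{a_N^2}\,d_{N,h,\varepsilon}=A_{N,h}+B''_{N,h}+B'_{N,h}$ with a ``past'' term $A_{N,h}:=a_N^{-2}\sum_{k\le0}c_{N,h}(k)(Z_k^2-\varepsilon b_N)$, a ``boundary'' term $B''_{N,h}:=a_N^{-2}\sum_{1\le k\le mN}e_{N,h}(k)(Z_k^2-\varepsilon b_N)$, and a ``bulk'' term $B'_{N,h}:=a_N^{-2}\sum_{1\le k\le mN}\psi_h(\{\varepsilon k\})(Z_k^2-\varepsilon b_N)$. In both $A_{N,h}$ and $B''_{N,h}$ the relevant coefficient $w_k$ of $Z_k^2-\varepsilon b_N$ satisfies $\sup_k|w_k|\le Ca_N^{-2}$ and $\sum_k|w_k|=O(a_N^{-2}N^{2d})$ by (i)--(iii). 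Decomposing $Z_k^2-\varepsilon b_N=(Z_k^2-\varepsilon\sigma^2)+\varepsilon(\sigma^2-b_N)$, the deterministic part contributes $\varepsilon(\sigma^2-b_N)\sum_kw_k$, of modulus $O\bigl((a_N^2/N)\cdot a_N^{-2}N^{2d}\bigr)=O(N^{2d-1})\to0$ by Karamata's theorem; for the mean-zero part I would invoke the von Bahr--Esseen inequality with an exponent $p\in(\max(1,\alpha d),\tfrac\alpha2)$ (nonempty since $\alpha>2$ and $d<\tfrac12$, and then $p<2$): by $\sup_k|w_k|\le Ca_N^{-2}$ and $p>1$, $\sum_k|w_k|^p\le (Ca_N^{-2})^{p-1}\sum_k|w_k|=O(a_N^{-2p}N^{2d})$, while $\E|Z_k^2-\varepsilon\sigma^2|^p=\E|L_\varepsilon^2-\varepsilon\sigma^2|^p<\infty$ because $2p<\alpha$; hence $\E\bigl|\sum_kw_k(Z_k^2-\varepsilon\sigma^2)\bigr|^p\le 2\,\E|L_\varepsilon^2-\varepsilon\sigma^2|^p\cdot O(a_N^{-2p}N^{2d})=O\bigl(N^{2d-2p/\alpha}l(N)^{-2p}\bigr)\to0$, using $p>\alpha d$ and $a_N=N^{1/\alpha}l(N)$. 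Thus $A_{N,h}\to0$ and $B''_{N,h}\to0$ in probability.

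For the bulk term, grouping $1\le k\le mN$ by residue $r=k\bmod m$ (each class has $N$ elements) and using $\psi_h(\{\varepsilon k\})=\psi_h(r\varepsilon)$ gives $B'_{N,h}=\sum_{r=0}^{m-1}\psi_h(r\varepsilon)\,\Delta^{(N)}_r$ with $\Delta^{(N)}_r:=a_N^{-2}\sum_{u}(Z_{mu+r}^2-\varepsilon b_N)$. For fixed $r$ the variables $(Z_{mu+r})_u$ are i.i.d.\ copies of $L_\varepsilon$, so, identifying them in law with $(L_t-L_{t-\varepsilon})_{t=1}^N$, Lemma~\ref{lemmaformixedterms} yields $\Delta^{(N)}_r\stackrel{d}{\rightarrow}S_{\alpha/2}(\varepsilon^{2/\alpha}\tau,\beta,\varepsilon\mu)$, which coincides in law with the increment $\Delta_r K:=K_{(r+1)\varepsilon}-K_{r\varepsilon}$; since the blocks for distinct $r$ use disjoint $Z$'s, the $\Delta^{(N)}_r$ are independent, so jointly $(\Delta^{(N)}_r)_{r=0}^{m-1}\stackrel{d}{\rightarrow}(\Delta_r K)_{r=0}^{m-1}$ with the $\Delta_r K$ i.i.d. A short computation (substitute $v\mapsto v-1$ and use $f_m(i+s)=f(i+\varepsilon\lfloor ms\rfloor)$) shows that $G_{m,h}$ equals $G_{m,h}(r\varepsilon)$ on $[r\varepsilon,(r+1)\varepsilon)$ and that $\psi_h(r\varepsilon)=G_{m,h}(\bar r\varepsilon)$ with $\bar r:=(-r)\bmod m$; since $r\mapsto\bar r$ is an involution of $\{0,\dots,m-1\}$ and the $\Delta_r K$ are i.i.d., $\bigl(\sum_r\psi_h(r\varepsilon)\Delta_r K\bigr)_{0\le h\le H}\stackrel{d}{=}\bigl(\sum_rG_{m,h}(r\varepsilon)\Delta_r K\bigr)_{0\le h\le H}=\bigl(\int_0^1G_{m,h}(s)\,dK_s\bigr)_{0\le h\le H}$. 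Hence $(B'_{N,0},\dots,B'_{N,H})\stackrel{d}{\rightarrow}\bigl(\int_0^1G_{m,0}\,dK_s,\dots,\int_0^1G_{m,H}\,dK_s\bigr)$, and Slutsky's lemma together with $A_{N,h},B''_{N,h}\to0$ completes the proof.

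I expect the main obstacle to be the control of $A_{N,h}$ and $B''_{N,h}$: because $\E[L_1^4]=\infty$, variances are unavailable, and one must instead combine the $L^p$-moment (von Bahr--Esseen) estimate for $p<\alpha/2$ with the power-type bounds (i)--(iii) on the weights $c_{N,h}(k)$; by comparison, the joint convergence of the bulk term and the identification of its limit via the reflection $r\mapsto(-r)\bmod m$ are comparatively routine.
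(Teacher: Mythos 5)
Your proof is correct. Its top-level architecture --- substitute $k=mt-i$, split the index set by residue class mod $m$, exploit the independence of the $m$ resulting blocks, and identify the limit of the $m$-fold convolution with $\int_0^1 G_{m,h}\,dK_s$ because $G_{m,h}$ is an equidistant step function --- coincides with the paper's. Where you genuinely diverge is in how the stable limit for each weighted block is obtained: the paper treats each block as a discrete-time moving average of the i.i.d.\ squared increments with absolutely summable coefficients $f(i+\varepsilon j)f(i+h+\varepsilon j)$ and simply invokes Theorem 4.1 of Davis and Resnick \cite{DavisResnick1985} together with the technique of Lemma 5.1 in \cite{HK}, whereas you re-derive the needed special case from scratch: you replace the finite-sample weights $c_{N,h}(k)$ by their periodic limit $\psi_h(\{\varepsilon k\})$ and dispose of the ``past'' and ``boundary'' remainders via the von Bahr--Esseen bound with $p\in(\max(1,\alpha d),\tfrac{\alpha}{2})$ combined with your weight estimates (i)--(iii); this interval is indeed nonempty precisely because $\alpha>2$ and $d<\tfrac12$, and the requirement $2p<\alpha$ is exactly what makes $\E|L_\varepsilon^2-\varepsilon\sigma^2|^p$ finite. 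The citation buys the paper a half-page proof; your route costs several pages of elementary estimates but is self-contained and makes visible where the hypotheses enter. Your identification $\psi_h(r\varepsilon)=G_{m,h}\bigl(((-r)\bmod m)\varepsilon\bigr)$ together with the exchangeability of the i.i.d.\ increments $\Delta_rK$ is a correct (if slightly roundabout) substitute for the paper's direct observation that the coefficient sum of the $j$-th block equals $G_{m,h}(\varepsilon j)$; both yield the same limit law.
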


\begin{proof} The almost sure absolute convergence and convergence in $L^1(\PP)$ of $d_{N,h,\varepsilon}$ are clear.
By Lemma \ref{lemmaformixedterms}, we know that
\[\frac{1}{a_N^2} \sum_{t=1}^{N} ((L_t-L_{t-\varepsilon})^2- \varepsilon b_{N}) \stackrel{d}{\rightarrow} S_{\frac{\alpha}{2}}(\varepsilon^{2/\alpha} \tau, \beta, \varepsilon \mu).\]

By rearranging one sees that
\[d_{N,h,\varepsilon} =\frac{1}{N} \sum_{t=1}^N \sum_{j \in \{0,\ldots,m-1\}} \sum_{i=0}^{\infty} f(i+ \varepsilon j) f(i+h+ \varepsilon j)((L_{t-i-\varepsilon j}-L_{t-i-\varepsilon(j-1)})^2-\varepsilon b_{N}).\]

By Theorem 4.1 of Davis and Resnick \cite{DavisResnick1985} and using the technique used in the proof of Lemma 5.1 in \cite{HK}, we obtain with
\[d_{N,h,\varepsilon,j}:= \frac{1}{N} \sum_{t=1}^N  \sum_{i=0}^{\infty} f(i+ \varepsilon j) f(i+h+ \varepsilon j)((L_{t-i-\varepsilon j}-L_{t-i-\varepsilon(j-1)})^2-\varepsilon b_{N})\]
that
\[\frac{N}{a_N^2} \Big(d_{N,0,\varepsilon,j},\ldots,d_{N,H,\varepsilon,j}\Big)\stackrel{d}{\rightarrow}\Big(G_{m,0}(\varepsilon j),\ldots, G_{m,H}(\varepsilon j)\Big) S_{\frac{\alpha}{2}}(\varepsilon^{2/\alpha} \tau, \beta, \varepsilon \mu)\]
for each $j\in \{0,\ldots,m-1\}$.  Since the sequences are independent for different $j$, the convolution of the  limits equals the limit of the sums and the claimed convergence follows.  Observe that we use the fact that $G_{m,h}$ is an equidistant step function.
\end{proof}

The convergence of the non-diagonal parts of the autocovariance function to the Rosenblatt distribution was already established in section 2. Hence we can state the following lemma:

\begin{lemma} \label{lemmaforbillingley1} Let the assumptions of Lemma \ref{diagonalparttostable} be fulfilled. Define $r_{N,h,\varepsilon}$ as in (\ref{definitionrn}) and $d_{N,h,\varepsilon}$ as in (\ref{definitiondn}).
If $\frac{1}{\alpha} > d$, then
\[\frac{N}{a_N^2} ((d_{N,0,\varepsilon}  + r_{N,0,\varepsilon}),\ldots,(d_{N,H,\varepsilon}  + r_{N,H,\varepsilon})) \stackrel{d}{\rightarrow} (\int_0^1 G_{m,0}(s) \, dK_s,\ldots, \int_0^1 G_{m,H}(s) \, dK_s).\]
If $\frac{1}{\alpha} < d$, then
\[N^{1-2d} ((d_{N,0,\varepsilon}  + r_{N,0,\varepsilon}),\ldots,(d_{N,H,\varepsilon}  + r_{N,H,\varepsilon})) \stackrel{d}{\rightarrow} C_d^2 \sigma^2 U_d(1) (1,\ldots,1) .\]
\end{lemma}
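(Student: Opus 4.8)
The plan is to deduce both assertions from Lemma~\ref{diagonalparttostable} and Lemma~\ref{nondiagonalparttoRosenblatt} via Slutsky's lemma: in each of the two regimes one of the two summands converges, after rescaling at the appropriate rate, to the asserted limit, while the other summand, at that same rate, converges to $0$ in probability. Recall that $(a_N)_{N\in\mathbb{N}}$ is regularly varying with index $\frac1\alpha$, say $a_N=N^{1/\alpha}l(N)$ for a slowly varying $l$, so that $N/a_N^2=N^{1-2/\alpha}l(N)^{-2}$; throughout I use the elementary fact that $N^{\beta}l(N)^{\gamma}\to0$ whenever $\beta<0$ and $\gamma\in\R$.

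Consider first the case $\frac1\alpha>d$. By Lemma~\ref{diagonalparttostable} we already have
\[\frac{N}{a_N^2}(d_{N,0,\varepsilon},\ldots,d_{N,H,\varepsilon})\stackrel{d}{\rightarrow}\Big(\int_0^1G_{m,0}(s)\,dK_s,\ldots,\int_0^1G_{m,H}(s)\,dK_s\Big),\]
so it suffices to prove $\frac{N}{a_N^2}r_{N,h,\varepsilon}\stackrel{\PP}{\rightarrow}0$ for each $h$. Since $d$ may here fail to exceed $\frac14$, Lemma~\ref{nondiagonalparttoRosenblatt} is not available, and I would instead estimate $\E[r_{N,h,\varepsilon}^2]$ directly. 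As $(Z_kZ_{k'})_{k\neq k'}$ is an orthogonal family in $L^2(\PP)$ with $\E[Z_kZ_{k'}]=0$ (using $\E[L_1]=0$) and $\E[(Z_kZ_{k'})^2]=(\varepsilon\sigma^2)^2$, and the coefficients are square summable, $r_{N,h,\varepsilon}$ has mean zero and a variance which, after expanding $\E[(Nr_{N,h,\varepsilon})^2]$ and keeping only the non-vanishing pairings $\{k,k'\}=\{i,i'\}$, reduces to a double sum of the same type as the one treated for $E_{12+}$, $E_{12-}$ in the proof of Lemma~\ref{asymptoticrn}, now with $f$ in place of $v$; using $\sum_i|f(\varepsilon i)f(\varepsilon i+n)|=O(n^{2d-1})$ for $n\geq1$ and $\sum_if(\varepsilon i)^2<\infty$ one obtains $\E[r_{N,h,\varepsilon}^2]=O(N^{4d-2})$ if $d>\frac14$, $O(N^{-1}\log N)$ if $d=\frac14$ and $O(N^{-1})$ if $d<\frac14$. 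Multiplying by $(N/a_N^2)^2=N^{2-4/\alpha}l(N)^{-4}$ and using $4d-4/\alpha<0$ in the first case and $1-4/\alpha<0$ (which holds since $\alpha<4$) in the other two, I conclude $\frac{N^2}{a_N^4}\E[r_{N,h,\varepsilon}^2]\to0$, hence $\frac{N}{a_N^2}r_{N,h,\varepsilon}\to0$ in $L^2(\PP)$ and a fortiori in probability. As this holds jointly in $h$, Slutsky's lemma gives the stable limit.

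Now consider the case $\frac1\alpha<d$. Since $\alpha<4$ forces $\frac1\alpha>\frac14$, here $d\in(\frac14,\frac12)$, so Lemma~\ref{nondiagonalparttoRosenblatt} applies and $N^{1-2d}(r_{N,0,\varepsilon},\ldots,r_{N,H,\varepsilon})\stackrel{d}{\rightarrow}C_d^2\sigma^2U_d(1)(1,\ldots,1)$. On the other hand, Lemma~\ref{diagonalparttostable} yields $\frac{N}{a_N^2}d_{N,h,\varepsilon}=O_P(1)$, hence $d_{N,h,\varepsilon}=O_P(a_N^2/N)=O_P(N^{2/\alpha-1}l(N)^2)$ and therefore $N^{1-2d}d_{N,h,\varepsilon}=O_P(N^{2/\alpha-2d}l(N)^2)\stackrel{\PP}{\rightarrow}0$, because $2/\alpha-2d<0$. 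This holds jointly in $h$, so by Slutsky's lemma $N^{1-2d}(d_{N,h,\varepsilon}+r_{N,h,\varepsilon})_{h=0}^H$ has the same limit as $N^{1-2d}(r_{N,h,\varepsilon})_{h=0}^H$, i.e.\ the asserted Rosenblatt limit.

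I expect the main obstacle to be the variance estimate for $r_{N,h,\varepsilon}$ in the regime $\frac1\alpha>d$ with $d\leq\frac14$: there the rescaled quantity is no longer attracted by a self-similar limit, and one has to control the crossover between the $O(N^{4d})$ behaviour of the relevant double sum for $d>\frac14$ and its $O(N\log N)$, resp.\ $O(N)$, behaviour for $d=\frac14$, resp.\ $d<\frac14$. Everything else is bookkeeping with slowly varying functions and an appeal to Slutsky's lemma together with the joint convergences already established in Lemmas~\ref{diagonalparttostable} and~\ref{nondiagonalparttoRosenblatt}.
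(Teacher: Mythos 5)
Your proof is correct and follows the same overall route as the paper's: Slutsky's lemma applied to the decomposition $d_{N,h,\varepsilon}+r_{N,h,\varepsilon}$, using Lemma \ref{diagonalparttostable}, Lemma \ref{nondiagonalparttoRosenblatt}, and the fact that $a_N^2$ is regularly varying with index $2/\alpha$ so that $N^{2d}=o(a_N^2)$ when $\frac1\alpha>d$ and $a_N^2=o(N^{2d})$ when $\frac1\alpha<d$. The one place where you genuinely deviate is the regime $\frac1\alpha>d$ with $d\le\frac14$: the paper's one-line proof disposes of $r_{N,h,\varepsilon}$ by citing Lemma \ref{nondiagonalparttoRosenblatt}, even though that lemma is only stated (and the Rosenblatt limit only defined) for $d\in(\frac14,\frac12)$, so as written the paper's argument does not cover $d\le\frac14<\frac1\alpha$. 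Your direct second-moment bound $\E[(Nr_{N,h,\varepsilon})^2]=O(N^{4d})$, $O(N\log N)$, resp.\ $O(N)$ in the three subcases is exactly right: only the pairings $\{k,k'\}=\{i,i'\}$ survive, so the computation is the analogue (with $f$ in place of $v$) of the $E_{12}$ estimates recorded at the end of the proof of Lemma \ref{asymptoticrn}, and it uses only second moments of the $Z_k$; multiplying by $N^2a_N^{-4}=N^{2-4/\alpha}l(N)^{-4}$ then gives convergence to zero since $4d<4/\alpha$ and $\alpha<4$. This closes a gap the paper leaves open. The remaining steps (the bound $d_{N,h,\varepsilon}=O(a_N^2/N)$ in probability killing the diagonal term at rate $N^{1-2d}$ when $d>\frac1\alpha$, and the joint convergence over $h=0,\ldots,H$) match the paper's argument.
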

\begin{proof}
Note that $a_N^2$ is regularly varying with index $\frac{2}{\alpha}$. Hence $N^{2d}=o(a_N^2)$ if $\frac{1}{\alpha} > d$ and $a_N^2 = o(N^{2d})$ if $\frac{1}{\alpha} < d$. The lemma then follows by Slutsky's lemma using Lemma \ref{nondiagonalparttoRosenblatt} and Lemma \ref{diagonalparttostable}.
\end{proof}

\begin{remark} \label{replacingcentringsequence} Now let us assume that the Lévy process is symmetric and has no Gaussian part.
We decompose the Lévy process into two independent Lévy processes: let $L^{\leq,a_N}$ have the Lévy measure of $L$ restricted to $[-a_N,a_N]$ and $L^{>,a_N}$ have the Lévy measure of $L$ restricted to $[-a_N,a_N]^c$. Note that $L^{\leq,a_N}$ has the variance $\Var(L^{\leq,a_N}_1)= \int_{-a_N}^{a_N} x^2 \nu(dx)$, see Example 25.12 in \cite{Sato}, hence $\E[(L^{\leq,a_N})^2]=\int_{-a_N}^{a_N} x^2 \nu(dx) + \E[L_1^{\leq,a_N}]^2= \int_{-a_N}^{a_N} x^2 \nu(dx)$, while $\E[(L_1 1_{\{|L_1|\leq a_N\}})^2]= \int_{-a_N}^{a_N} x^2 \mu (dx)$ where $\mu$ denotes the distribution of $L_1$ and $\nu$ denotes its Lévy measure. Of course $\mu$ and $\nu$ are in general not equal but $\E[(L_1)^2]=\int_{-\infty}^{\infty} x^2 \mu(dx) = \int_{-\infty}^{\infty} x^2 \nu(dx) +\E[L_1]^2=\int_{-\infty}^{\infty} x^2 \nu(dx)$ is true, see Example 25.12 in \cite{Sato}. We know additionally that both $\mu$ and $\nu$ have the same tail behaviour, i.e. they are both regularly varying with the same index and the same tail balance condition, see for example Hult and Lindskog \cite{HultLindskog} Proposition 3.1. Hence by Karamata's theorem, by the tail equivalence of $\mu$ and $\nu$ and by the symmetry of $\mu$ and $\nu$,
\begin{eqnarray*} \lim_{N \to \infty} \frac{N}{a_N^2}(\sigma^2 - \int_{-a_N}^{a_N} x^2 \, \nu(dx))&=& \lim_{N \to \infty} \frac{N}{a_N^2} 2\int_{a_N}^{\infty} x^2 \, \nu(dx)\\
=\frac{\alpha}{\alpha-2}&=&\lim_{N \to \infty} \frac{N}{a_N^2}(\sigma^2 - \int_{-a_N}^{a_N} x^2 \, \mu(dx)).\end{eqnarray*}
Hence we can replace the centring sequence $b_{N}:= \E[L_{1}^2 1_{\{|L_1|\leq a_N\}}]$ by $\tilde{b}_{N}:= \E[(L_{1}^{\leq,a_N})^2]$ without changing the limit in (\ref{stablelimit}). Note that $\lim_{N \to \infty} \frac{N}{a_N^2}(b_N - \tilde{b}_N)=0$.

Note finally that this works as well if we assume that $\E[L_1]=0$, $L$ has no Gaussian part and $\E[L_1^{\leq,a_N}]=0$ for all $N \in \mathbb{N}$.
\end{remark}

\begin{lemma} \label{lemmaforbillingley2} Let the assumptions of Lemma \ref{diagonalparttostable} be fulfilled. Further assume that $L_1$ is symmetric about zero and has no Gaussian part. If $\frac{1}{\alpha} > d$, then for all $\delta>0$
\[\lim_{\varepsilon \to 0} \limsup_{N \to \infty} \PP[\Big|\frac{N}{a_N^2} \Big(\hat{\gamma}_N(h)-b_N \int_{-\infty}^{\infty} f(s) f(s+h) \, ds-(d_{N,h,\varepsilon}+r_{N,h,\varepsilon})\Big)\Big|>\delta]=0.\]
If $\frac{1}{\alpha} < d$, then for all $\delta>0$
\[\lim_{N \to \infty} \PP[\Big|N^{1-2d} \Big(\hat{\gamma}_N(h)-b_N \int_{-\infty}^{\infty} f(s) f(s+h) \, ds-(d_{N,h,\varepsilon}+r_{N,h,\varepsilon})\Big)\Big|>\delta]=0.\]
\end{lemma}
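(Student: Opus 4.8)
The plan is to start from the decomposition $\hat\gamma_N(h)=\bar r_{N,h,\varepsilon}+\bar d_{N,h,\varepsilon}$ of (\ref{decompositionautocoariance}) (this lemma is the ``good approximation'' step that, together with Lemma \ref{lemmaforbillingley1} and the Karamata correction linking $b_N\int f(s)f(s+h)\,ds$ to $\gamma(h)$, will yield Theorem \ref{asymptoticalstable} via a Billingsley-type argument) and to split
\[
\hat\gamma_N(h)-b_N\int_{-\infty}^{\infty} f(s)f(s+h)\,ds-(d_{N,h,\varepsilon}+r_{N,h,\varepsilon})
=(\bar r_{N,h,\varepsilon}-r_{N,h,\varepsilon})+\Big(\bar d_{N,h,\varepsilon}-b_N\int_{-\infty}^{\infty} f(s)f(s+h)\,ds-d_{N,h,\varepsilon}\Big),
\]
treating the two summands separately. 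The non-diagonal discrepancy $\bar r_{N,h,\varepsilon}-r_{N,h,\varepsilon}$ is already covered by Lemma \ref{asymptoticrn}, whose proof gives $\E[(N(\bar r_{N,h,\varepsilon}-r_{N,h,\varepsilon}))^2]=o(N^{4d})$, and even $O(N\log N)$ for $d=\tfrac14$ and $O(N)$ for $d<\tfrac14$. Since $a_N^2$ is regularly varying of index $2/\alpha$, in the case $\tfrac1\alpha>d$ all these bounds are $o(a_N^4)$ (using $4d<4/\alpha$, resp.\ $\alpha<4$), so Markov's inequality yields $\tfrac{N}{a_N^2}(\bar r_{N,h,\varepsilon}-r_{N,h,\varepsilon})\to 0$ in probability; in the case $\tfrac1\alpha<d$ one has $d>\tfrac14$, and the bound $o(N^{4d})$ is precisely what is needed for $N^{1-2d}(\bar r_{N,h,\varepsilon}-r_{N,h,\varepsilon})\to 0$ in probability.

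For the diagonal discrepancy I would isolate a harmless deterministic part. Writing $\bar Z_{k,t,0}=f(t-\varepsilon k)Z_k+R_{k,t,0}$ with $R_{k,t,0}:=\int_{\varepsilon(k-1)}^{\varepsilon k}(f(t-s)-f(t-\varepsilon k))\,dL_s$ (and likewise at lag $h$), substituting $k=mt-i$, and using that $\sum_k f(t-\varepsilon k)f(t+h-\varepsilon k)=\sum_{i\geq 0}f(\varepsilon i)f(\varepsilon i+h)=\int_{-\infty}^{\infty} f_m(u)f_m(u+h)\,du=\int_0^1 G_{m,h}(s)\,ds$ does not depend on $t$, one obtains
\[
\bar d_{N,h,\varepsilon}-b_N\int_{-\infty}^{\infty} f(s)f(s+h)\,ds-d_{N,h,\varepsilon}
=\big(E_N-\E[E_N]\big)+(b_N-\sigma^2)\Big(\int_0^1 G_{m,h}(s)\,ds-\int_0^1 G_h(s)\,ds\Big),
\]
where $E_N:=\tfrac1N\sum_{t=1}^N\sum_k\big(\bar Z_{k,t,0}\bar Z_{k,t,h}-f(t-\varepsilon k)f(t+h-\varepsilon k)Z_k^2\big)=\tfrac1N\sum_{t=1}^N\sum_k\big(f(t-\varepsilon k)Z_kR_{k,t,h}+f(t+h-\varepsilon k)Z_kR_{k,t,0}+R_{k,t,0}R_{k,t,h}\big)$ is the block-wise quadrature error in the diagonal part, and where I used $\E[E_N]=\gamma(h)-\sigma^2\int_0^1 G_{m,h}(s)\,ds=\sigma^2\big(\int_0^1 G_h(s)\,ds-\int_0^1 G_{m,h}(s)\,ds\big)$. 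The deterministic term is harmless: by Karamata's theorem $\tfrac{N}{a_N^2}(\sigma^2-b_N)\to\tfrac{\alpha}{\alpha-2}$ (as already used for (\ref{stablelimit2})). In the case $\tfrac1\alpha>d$ the assumed $L^{\alpha/2}([0,1])$-convergence of $G_{m,h}$ to $G_h$ gives $\int_0^1(G_{m,h}-G_h)(s)\,ds\to 0$ as $\varepsilon\to0$, so $\limsup_N$ of $\tfrac{N}{a_N^2}$ times this term equals $\tfrac{\alpha}{\alpha-2}\big|\int_0^1(G_{m,h}-G_h)(s)\,ds\big|$, which tends to $0$ as $\varepsilon\to0$; in the case $\tfrac1\alpha<d$, $N^{1-2d}(b_N-\sigma^2)$ is regularly varying of the negative index $2/\alpha-2d$, hence $\to 0$, and $\int_0^1 G_{m,h}(s)\,ds$ stays bounded in $m$ by (\ref{basicinequalitywithK}), so again the term vanishes.

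It remains to show that $\tfrac{N}{a_N^2}(E_N-\E[E_N])$, respectively $N^{1-2d}(E_N-\E[E_N])$, tends to $0$ in probability; this is the crux. The obstacle is that $L_1$ has finite moments only of order $<\alpha<4$, so the products $Z_kR_{k,t,h}$ and $R_{k,t,0}R_{k,t,h}$ of stochastic integrals over the common increment $[\varepsilon(k-1),\varepsilon k]$ have infinite variance, and the plain second-moment estimate used for Lemmas \ref{asymptoticrn} and \ref{asymptoticdn} is no longer available. I would resolve this by truncation, as in the proof of Lemma 5.1 in \cite{HK}: since $L$ has no Gaussian part, decompose $L=L^{\leq,a_N}+L^{>,a_N}$ into the independent L\'evy processes whose L\'evy measures are $\nu$ restricted to $[-a_N,a_N]$ and to its complement (symmetry of $L_1$ makes $L^{\leq,a_N}$ symmetric, hence centred; cf.\ Remark \ref{replacingcentringsequence}), and split $E_N$ accordingly into a part built from $L^{\leq,a_N}$, a part built from $L^{>,a_N}$ and cross terms. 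For the $L^{\leq,a_N}$-part all moments are finite and, by Karamata's theorem, $\E[(L_1^{\leq,a_N})^4]=O(a_N^4/N)$ (here $\alpha<4$ is essential); running the variance computation of Lemmas \ref{asymptoticrn} and \ref{asymptoticdn} with the fixed fourth moment replaced by this $N$-dependent bound, and exploiting the extra smallness of the quadrature factors $f(t-s)-f(t-\varepsilon k)$ --- controlled, exactly as in the proof of Lemma \ref{asymptoticrn}, by the modulus of continuity of $f$ at scale $\varepsilon$, which vanishes as $\varepsilon\to0$ --- should give $\Var\big(N(E_N^{\leq,a_N}-\E[E_N^{\leq,a_N}])\big)\leq\rho(\varepsilon)\,a_N^4$ with $\rho(\varepsilon)\to0$, whence the claim by Chebyshev after letting $\varepsilon\to0$ (in the case $\tfrac1\alpha<d$ the cruder bound $o(N^{4d})$ suffices directly). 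The $L^{>,a_N}$-part is compound Poisson with only $N\bar\nu(a_N)=O(1)$ jumps in $[1,N]$, and a jump of size $x$ at $s_0\in[\varepsilon(k-1),\varepsilon k]$ contributes to $E_N$ a term of order $\tfrac1N\sum_{t=1}^N|f(t-s_0)f(t+h-s_0)-f(t-\varepsilon k)f(t+h-\varepsilon k)|\,x^2$, whose deterministic factor is again of the size of the modulus of continuity of $f$ at scale $\varepsilon$; since $x^2=O_P(a_N^2)$ this gives $\tfrac{N}{a_N^2}E_N^{>,a_N}=o_\varepsilon(1)\,O_P(1)\to 0$ after $\varepsilon\to0$, and the cross terms are disposed of the same way via the Cauchy--Schwarz inequality. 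Assembling the three contributions by the triangle inequality gives the asserted vanishing of the (double) limit. I expect this truncation argument --- in particular verifying that the $\varepsilon$-smallness of the quadrature error survives multiplication by the growing quantity $\E[(L_1^{\leq,a_N})^4]\asymp a_N^4/N$, and handling the part of the sums where $t-\varepsilon k$ stays bounded (where $f$ is only assumed bounded) --- to be the main obstacle and the bulk of the work.
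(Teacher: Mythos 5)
Your overall architecture coincides with the paper's: the same initial split into the non-diagonal discrepancy $\bar r_{N,h,\varepsilon}-r_{N,h,\varepsilon}$ (disposed of by Lemma \ref{asymptoticrn} with exactly the case distinction $d>\frac14$, $d=\frac14$, $d<\frac14$ that you give, and the same comparison of $N^{4d}$, $N\log N$, $N$ with $a_N^4$) and a diagonal discrepancy, the latter attacked by decomposing $L=L^{\leq,a_N}+L^{>,a_N}$ as in Remark \ref{replacingcentringsequence} and controlling the heavy part through Karamata estimates. The differences are organizational: you keep $\bar d_{N,h,\varepsilon}-d_{N,h,\varepsilon}$ together as a centred quadrature error $E_N-\E[E_N]$ plus the deterministic correction $(b_N-\sigma^2)\bigl(\int_0^1 G_{m,h}-\int_0^1G_h\bigr)$ (your algebra here is correct, and your treatment of that deterministic term in both cases is fine), whereas the paper bounds $\bar d^{\leq,a_N}_{N,h,\varepsilon}-\tilde b_N\int f(s)f(s+h)\,ds$ and $d^{\leq,a_N}_{N,h,\varepsilon}$ as two \emph{separate} probabilities and only forms differences for the $L^{>,a_N}$- and cross-parts; and you treat the heavy part by counting the $O_P(1)$ large jumps, where the paper integrates against the total-variation process $|L^{>a_N}|$ and applies a first-moment bound to $\frac{1}{a_N^2}\sum_t|A_t|$ using $\lim_N \frac{N}{a_N^2}\E[|L^{>a_N}|_1^2]<\infty$.

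The genuine gap is the one you flag yourself: the claim $\Var\bigl(N(E_N^{\leq,a_N}-\E[E_N^{\leq,a_N}])\bigr)\le\rho(\varepsilon)a_N^4$ with $\rho(\varepsilon)\to0$ is asserted, not proved, and it is exactly the delicate point. Rerunning Lemma \ref{asymptoticdn} with the truncated noise replaces the fixed fourth cumulant by $\int_{-a_N}^{a_N}x^4\,\nu(dx)\sim \mathrm{const}\cdot a_N^4\,\nu(\{|x|>a_N\})\asymp a_N^4/N$; since there are $\asymp N/\varepsilon$ increments, each carrying a factor $\varepsilon$ from the interval length, the fourth-cumulant contribution to the variance is of the exact order $a_N^4$ --- borderline, with no room to spare. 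To obtain the extra factor $\rho(\varepsilon)$ you must extract smallness from the quadrature factors $f(t-s)-f(t-\varepsilon k)$ in \emph{every} regime, including the range $t-\varepsilon k\in[0,1]$, where $f$ is only assumed bounded and has no modulus of continuity; your sketch does not address this boundary region, and the same issue reappears in your jump-counting bound for the $L^{>,a_N}$-part (a jump of size $\asymp a_N$ at $s_0$ with $t-s_0\in[0,1]$ contributes a term of order $x^2/a_N^2=O_P(1)$ to $\frac{N}{a_N^2}E_N^{>,a_N}$ unless smallness or cancellation there is established). So the skeleton is right and matches the paper's, but the decisive estimate --- precisely the one you identify as ``the bulk of the work'' --- is missing, and as sketched it is not clear it closes without additional care (or regularity of $f$ near the origin) in the boundary terms.
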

\begin{proof}
We first show the first claim. Note that by (\ref{decompositionautocoariance})
\begin{eqnarray*}
&& \hat{\gamma}_N(h)-b_N \int_{-\infty}^{\infty} f(s) f(s+h) \, ds-(d_{N,h,\varepsilon}+r_{N,h,\varepsilon})\\
&=&\bar{d}_{N,h,\varepsilon}-b_N \int_{-\infty}^{\infty} f(s) f(s+h) \, ds-d_{N,h,\varepsilon}-(r_{N,h,\varepsilon}-\bar{r}_{N,h,\varepsilon}).
\end{eqnarray*}

We denote $Z_{i}^{\leq,a_N}:=L_{\varepsilon i}^{\leq,a_N}-L_{\varepsilon(i-1)}^{\leq,a_N}$ and $Z_{i}^{>,a_N}:=L_{\varepsilon i}^{>,a_N}-L_{\varepsilon(i-1)}^{>,a_N}$ in analogy to (\ref{increments}), where we use the notation of Remark \ref{replacingcentringsequence}. In the same manner we define in analogy to (\ref{definitionbarz}), (\ref{definitionbardn}) and (\ref{definitiondn})
\begin{equation} \bar{Z}_{k,t,h}^{\leq,a_N} := \int_{\varepsilon(k-1)}^{\varepsilon k} f(t+h-s) \, dL_s^{\leq,a_N}, \quad k \in \mathbb{Z}, \end{equation}
\begin{equation} \bar{Z}_{k,t,h}^{>,a_N} := \int_{\varepsilon(k-1)}^{\varepsilon k} f(t+h-s) \, dL_s^{>,a_N}, \quad k \in \mathbb{Z}, \end{equation}
\begin{equation} \bar{d}_{N,h,\varepsilon}^{\leq,a_N} := \frac{1}{N} \sum_{t=1}^N \sum_{k} \bar{Z}_{k,t,0}^{\leq,a_N}\bar{Z}_{k,t,h}^{\leq,a_N}, \quad h \in \mathbb{N}_0\end{equation}
and
\begin{equation}  d_{N,h,\varepsilon}^{\leq,a_N} := \frac{1}{N} \sum_{t=1}^{N} \sum_{i=0}^{\infty} f(\varepsilon i) f(\varepsilon i + h) ((Z_{mt-i}^{\leq,a_N})^2 - \varepsilon \tilde{b}_{N}).\end{equation}
$\bar{d}_{N,h,\varepsilon}^{\leq,a_N}$ and $d_{N,h,\varepsilon}^{\leq,a_N}$ can be seen to converge unconditionally as in (\ref{definitionbardn}) and (\ref{definitiondn}), where we use the fact that $\E[L_1^{\leq,a_N}]=0$ for all $N \in \mathbb{N}$ by our symmetry assumption.

We consider the upper estimate
\begin{eqnarray*}
&&\PP[|\frac{N}{a_N^2} (\bar{d}_{N,h,\varepsilon}-b_N (\int_{-\infty}^{\infty} f(s) f(s+h) \, ds)-d_{N,h,\varepsilon}-(r_{N,h,\varepsilon}-\bar{r}_{N,h,\varepsilon}))|>\delta]\\
&\leq &\PP[|\frac{N}{a_N^2} (\bar{d}_{N,h,\varepsilon}^{\leq,a_N}-\tilde{b}_N (\int_{-\infty}^{\infty} f(s) f(s+h) \, ds))|>\frac{\delta}{7}]\\
&+&\PP[|\frac{N}{a_N^2} (d_{N,h,\varepsilon}^{\leq,a_N})|>\frac{\delta}{7}]\\
&+&\PP[|\frac{N}{a_N^2} (\bar{r}_{N,h,\varepsilon}-r_{N,h,\varepsilon})|>\frac{\delta}{7}]\\
&+&\PP[|\frac{N}{a_N^2} (\frac{1}{N} \sum_{t=1}^{N} \sum_{i=0}^{\infty} f(\varepsilon i) f(\varepsilon i+ h) (Z_{mt-i}^{>,a_N})^2-\frac{1}{N} \sum_{t=1}^N \sum_{k} (\bar{Z}_{k,t,0}^{>,a_N})(\bar{Z}_{k,t,h}^{>,a_N}))|>\frac{\delta}{7}]\\
&+&\PP[|\frac{N}{a_N^2} (\frac{1}{N} \sum_{t=1}^{N} \sum_{i=0}^{\infty} f(\varepsilon i) f(\varepsilon i + h) (Z_{mt-i}^{\leq,a_N})(Z_{mt-i}^{>,a_N})-\frac{1}{N} \sum_{t=1}^N \sum_{k} (\bar{Z}_{k,t,0}^{\leq,a_N})(\bar{Z}_{k,t,h}^{>,a_N}))|>\frac{\delta}{7}]\\
&+&\PP[|\frac{N}{a_N^2} (\frac{1}{N} \sum_{t=1}^{N} \sum_{i=0}^{\infty} f(\varepsilon i) f(\varepsilon i+ h) (Z_{mt-i}^{\leq,a_N})(Z_{mt-i}^{>,a_N})-\frac{1}{N} \sum_{t=1}^N \sum_{k} (\bar{Z}_{k,t,0}^{>,a_N})(\bar{Z}_{k,t,h}^{\leq,a_N}))|>\frac{\delta}{7}]\\
&+&\PP[|\frac{N}{a_N^2} (b_N-\tilde{b}_N)(\int_{-\infty}^{\infty} f(s) f(s+h) \, ds  - \varepsilon \sum_{i=0}^{\infty} f(\varepsilon i) f(\varepsilon i+ h))|>\frac{\delta}{7}].
\end{eqnarray*}
The last term vanishes for $N$ large enough since  $\lim_{N \to \infty} \frac{N}{a_N^2}(b_N - \tilde{b}_N)=0$ by Remark \ref{replacingcentringsequence}. $\Var(N(\bar{d}_{N,h,\varepsilon}^{\leq,a_N}-\tilde{b}_N (\int_{-\infty}^{\infty} f(s) f(s+h) \, ds)))$ is in $O(NE[(L_1^{\leq,a_N})^2])\leq O(N)$ by the calculations of Lemma \ref{asymptoticdn}. Hence \[\limsup_{N \to \infty} \PP[|\frac{N}{a_N^2} (\bar{d}_{N,h,\varepsilon}^{\leq,a_N}-\tilde{b}_N (\int_{-\infty}^{\infty} f(s) f(s+h) \, ds))|>\frac{\delta}{7}]=0,\] since $a_N^2$ is a regularly varying sequence with index $2/\alpha$ and $\alpha<4$.
This also applies to the second term by using $f_m$ instead of $f$ as the corresponding function. The third term is negligible by the calculations in Lemma \ref{asymptoticrn}, which show that $\E[(N(\bar{r}_{N,h,\varepsilon} - r_{N,h,\varepsilon}))^2] = o(a_N^4)$
if $d<\frac{1}{\alpha}$, if $d \neq \frac{1}{4}$. (For $d<\frac{1}{4}$ this even holds for all $\alpha$.) In the case $d=\frac{1}{4}$, we additionally need that $\lim_{N \to \infty} a_N^{-4} N \ln N = 0$, since $\E[(N(\bar{r}_{N,h,\varepsilon} - r_{N,h,\varepsilon}))^2] = O(N \log N)$ holds in this case. Since $a_N=N^{1/\alpha} l(N)$ for a slowly varying function $l$, we have $a_N^{-4}=N^{1-4/\alpha}  \ln N l(N)^{-4}$. Since $\ln N l(N)^{-4}$ is slowly varying as well, $\lim_{N \to \infty} a_N^{-4} N \ln N = 0$ holds.\\
Now we consider the fourth term. To this end, define
\[\xi_t:= \sum_{k} (f(t-\varepsilon k) f(t+h-\varepsilon k) (Z_k^{>,a_N})^2-\bar{Z}_{k,t,0}^{>,a_N}\bar{Z}_{k,t,h}^{>,a_N}).\]
$\xi_t$ can be seen to converge unconditionally in $L^1(\PP)$.
Then
\[\sum_{t=1}^{N} \sum_{i=0}^{\infty} f(\varepsilon i) f(\varepsilon i+ h) (Z_{mt-i}^{>,a_N})^2- \sum_{t=1}^N \sum_{k} (\bar{Z}_{k,t,0}^{>,a_N})(\bar{Z}_{k,t,h}^{>,a_N})= \sum_{t=1}^N \xi_t.\]
We split the $\xi_t$ in two parts:
\begin{eqnarray*}
&&\sum_{k} (f(t-\varepsilon k) f(t+h-\varepsilon k) (Z_k^{>a_N})^2 - \bar{Z}^{>a_N}_{k,t,0}\bar{Z}^{>a_N}_{k,t,h})\\
&=&\sum_{k} ((\int_{\varepsilon (k-1)}^{\varepsilon k} f(t-\varepsilon k) \, dL^{>a_N}_s)(\int_{\varepsilon (k-1)}^{\varepsilon k} f(t+h-\varepsilon k) \, dL^{>a_N}_s)\\
&&-(\int_{\varepsilon (k-1)}^{\varepsilon k} f(t-s) \, dL^{>a_N}_s)(\int_{\varepsilon (k-1)}^{\varepsilon k} f(t+h-s) \, dL^{>a_N}_s))\\
&=&\sum_{k} ((\int_{\varepsilon (k-1)}^{\varepsilon k} f(t-\varepsilon k) \, dL^{>a_N}_s)(\int_{\varepsilon (k-1)}^{\varepsilon k} (f(t+h-\varepsilon k)-f(t+h-s)) \, dL^{>a_N}_s)\\
&&+(\int_{\varepsilon (k-1)}^{\varepsilon k} (f(t-\varepsilon k)-f(t-s)) \, dL^{>a_N}_s)(\int_{\varepsilon (k-1)}^{\varepsilon k} f(t+h-s) \, dL^{>a_N}_s))\\
&=:& A_t + B_t.
\end{eqnarray*}
We show that $\limsup_{N \to \infty} \E[\frac{1}{a_N^2} \sum_{t=1}^{N} |A_t|] = O(\varepsilon)$ as $\varepsilon \to 0$. The same argument applies to $B_t$. The calculations also show that the series defining $A_t$ and $B_t$ converge a.s. absolutely and unconditionally in $L^1(\PP)$ and hence they are well-defined.

We define a third Lévy process $|L^{>a_N}|$ which is defined  by $|L^{>a_N}|_t := \sum_{0 < s \leq t} |\Delta L^{>a_N}_s|$ for $t \geq 0$ and $|L^{>a_N}|_t := -\sum_{t <s <0} |\Delta L^{>a_N}_s|$ for $t < 0$. Define \[u(t):=C(t^{d-1}1_{(1,\infty)}(t)+  1_{[-\varepsilon,1]}(t))\] with $C$ large enough. We obtain

\begin{eqnarray*}
&&\E[|(\int_{\varepsilon (k-1)}^{\varepsilon k} f(t-\varepsilon k) \, dL^{>a_N}_s)(\int_{\varepsilon (k-1)}^{\varepsilon k} (f(t+h-\varepsilon k)-f(t+h-s)) \, dL^{>a_N}_s)|]\\
&\leq& \E[(\int_{\varepsilon (k-1)}^{\varepsilon k} |f(t-\varepsilon k)| \, d|L^{>a_N}|_s)(\int_{\varepsilon (k-1)}^{\varepsilon k} |(f(t+h-\varepsilon k)-f(t+h-s))| \, d|L^{>a_N}|_s)]\\
&\leq& \varepsilon^2 |f(t-\varepsilon k)| u(t+h-\varepsilon k) \E[(|L^{>a_N}_1|)^2]\\
&\leq&  \varepsilon^2 |f(t-\varepsilon k)| u(t-\varepsilon k) \E[(|L^{>a_N}|_1)^2],
\end{eqnarray*}
since $u(t-\varepsilon k) \geq u(t+h-\varepsilon k)$ if $t-\varepsilon k \geq -\varepsilon$ and $f(t-\varepsilon k)=0$ otherwise.
Note that we use $|f(t+h-\varepsilon k)-f(t+h-s)| \leq u(t+h-\varepsilon k)$ by the triangle inequality for $C$ large enough. Note further
\begin{eqnarray*}
\sum_k  |f(t-\varepsilon k)| u(t-\varepsilon k)&=& \sum_{i=1}^{\frac{1}{\varepsilon}} |f(\varepsilon i)u(\varepsilon i)| + \sum_{1+\frac{1}{\varepsilon}}^{\infty} |f(\varepsilon i)u(\varepsilon i)|\\
&\leq & KC\frac{1}{\varepsilon} +  KC \int_{\frac{1}{\varepsilon}}^{\infty} (\varepsilon x)^{2d-2} \, dx\\
& =&  KC\frac{1}{\varepsilon} + KC \frac{1}{\varepsilon} \int_{1}^{\infty} y^{2d-2} \, dy = O(\frac{1}{\varepsilon}) \mbox{ as } \varepsilon \to 0.
\end{eqnarray*}
By Karamata's theorem, see Theorem 1.6.5 in Bingham et al. \cite{Bingham}, \[\lim_{N \to \infty} \frac{N}{a_N^2} 2\int_{a_N}^{\infty} x^2 \, \nu (dx)\] exists and is finite. Also by Karamata's theorem,  $\lim_{N \to \infty} \frac{N}{a_N} 2 \int_{a_N}^{\infty} x \, \nu (dx)$ exists and is finite, hence $\lim_{N \to \infty} \frac{N}{a_N^2} (2 \int_{a_N}^{\infty} x \, \nu (dx))^2=0$. Thus
\[\lim_{N \to \infty} \frac{N}{a_N^2} \E[|L^{>a_N}|_1^2]= \lim_{N \to \infty} \frac{N}{a_N^2} [2\int_{a_N}^{\infty} x^2 \, \nu (dx) + (2\int_{a_N}^{\infty} x \, \nu (dx))^2]\]
exists and is finite. Hence $\limsup_{N \to \infty} \E[\frac{1}{a_N^2} \sum_{t=1}^{N} |A_t|] = O(\varepsilon)$ as $\varepsilon \to 0$. In the same fashion one can show that $\limsup_{N \to \infty} \E[\frac{1}{a_N^2} \sum_{t=1}^{N} |B_t|] = O(\varepsilon)$ as $\varepsilon \to 0$. By Markov's inequality, this gives convergence of the fourth term to 0 when letting first $N \to \infty$ and then $\varepsilon \to 0$.

Finally, we consider the fifth and sixth term. They are dealt with in the same manner. We use the same reasoning as for the fourth term and consider $\sum_{t=1}^N \xi_t$ where we replace $(Z_{mt-i}^{>,a_N})^2$ by $(Z_{mt-i}^{>,a_N})(Z_{mt-i}^{\leq ,a_N})$ and replace the first or second factor of $(\bar{Z}_{k,t,0}^{>,a_N})(\bar{Z}_{k,t,h}^{>,a_N})$. We then consider  $A_t$ and see that it is negligible by the following calculations:

By using the independence of $L^{>a_N}$ and $L^{\leq a_N}$, Jensen's inequality and Itô's isometry, we obtain
\begin{eqnarray*}
&&\E[|(\int_{\varepsilon (k-1)}^{\varepsilon k} f(t-\varepsilon k) \, dL^{\leq a_N}_s)(\int_{\varepsilon (k-1)}^{\varepsilon k} (f(t+h-\varepsilon k)-f(t+h-s)) \, dL^{>a_N}_s)|]\\
&\leq &\E[|(\int_{\varepsilon (k-1)}^{\varepsilon k} f(t-\varepsilon k) \, dL^{\leq a_N}_s)|] \E[|(\int_{\varepsilon (k-1)}^{\varepsilon k} (f(t+h-\varepsilon k)-f(t+h-s)) \, dL^{>a_N}_s)|]\\
&\leq &\sqrt{\E[((\int_{\varepsilon (k-1)}^{\varepsilon k} f(t-\varepsilon k) \, dL^{\leq a_N}_s))^2]} \E[|(\int_{\varepsilon (k-1)}^{\varepsilon k} |(f(t+h-\varepsilon k)-f(t+h-s))| \, d|L^{>a_N}|_s)|]\\
&\leq &\sqrt{(\int_{\varepsilon (k-1)}^{\varepsilon k} f(t-\varepsilon k)^2 \, ds)} \, \sigma u(t+h-\varepsilon k) \varepsilon \E[|L^{>a_N}|_1]\\
&\leq & \varepsilon^{\frac{3}{2}} |f(t-\varepsilon k)| \sigma u(t-\varepsilon k)  \E[|L^{>a_N}|_1].
\end{eqnarray*}
By Karamata's theorem, $\lim_{N \to \infty} \frac{N}{a_N} \E[|L^{>a_N}|_1]$ exists and is finite, hence \[\lim_{N \to \infty} \frac{N}{a_N^2} \E[|L^{>a_N}|_1]=0\]
and in turn $\limsup_{N \to \infty} \E[\frac{1}{a_N^2} \sum_{t=1}^{N} |A_t|] = 0$ for all $\varepsilon > 0$. In this case similar calculations show that  $\limsup_{N \to \infty} \E[\frac{1}{a_N^2} \sum_{t=1}^{N} |B_t|] =  0$ as well.

The second claim follows from the calculations for the first case and the fact that $a_N^2= o(N^{2d})$ if $d>\frac{1}{\alpha}$.
\end{proof}

Returning to the proof Theorem \ref{asymptoticalstable},
We first conclude the proof for the case $\frac{1}{\alpha} > d$: 
We show
\begin{eqnarray*}
&&\frac{N}{a_N^2} \Big(\hat{\gamma}_N(0)-b_N \int_{-\infty}^{\infty} f(s)f(s) \, ds,\ldots,\hat{\gamma}_N(H)-b_N \int_{-\infty}^{\infty} f(s)f(s+H) \, ds\Big)\\
&\stackrel{d}{\rightarrow}& \Big(\int_0^1 G_{0}(s) \, dK_s,\ldots, \int_0^1 G_{H}(s) \, dK_s\Big),\mbox{ as } N \to \infty.
\end{eqnarray*}
Since the sequence $G_{m,h}$ converges in $L^{\frac{\alpha}{2}}([0,1])$ to $G_{h}$, it follows that $\int_0^1 G_{m,h}(s) \, dK_s \stackrel{d}{\rightarrow} \int_0^1 G_h(s) \, dK_s$, see Proposition 3.5.1 in \cite{Samo}. Hence for the one-dimensional result by Theorem 3.2 in \cite{Billingsley} together with Lemma \ref{lemmaforbillingley1}, it suffices to check for all $\delta>0$
\[\lim_{\varepsilon \to 0} \limsup_{N \to \infty} \PP[|\frac{N}{a_N^2} (\hat{\gamma}_N(h)-b_N (\int_{-\infty}^{\infty} f(s) f(s+h) \, ds)-(d_{N,h,\varepsilon}+r_{N,h,\varepsilon}))|>\delta]=0,\]
which has been proved in Lemma \ref{lemmaforbillingley2}.
The multidimensional results also follows by the simple fact that a vector converges in probability if its components converge in probability.

By Karamata's theorem, see Theorem 1.6.5 in Bingham et al., we see
\begin{eqnarray}\lim_{N \to \infty} \frac{N}{a_N^2}(\sigma^2 - b_N) &=& \lim_{N \to \infty} \frac{N}{a_N^2}(\sigma^2 - \int_{-a_N}^{a_N} x^2 \, \mu(dx)) \nonumber\\
&=& \lim_{N \to \infty} \frac{N}{a_N^2} 2\int_{a_N}^{\infty} x^2 \, \mu(dx)= \frac{\alpha}{\alpha-2}. \label{alphaadjustment}\end{eqnarray}
By (\ref{alphaadjustment}), we have
\begin{eqnarray*}
\lim_{N \to \infty} \frac{N}{a_N^2}\Big(b_N \int_{-\infty}^{\infty} f(s)f(s+h) \, ds - \gamma(h)\Big) & = &\lim_{N \to \infty} \frac{N}{a_N^2}(b_N-\sigma^2)\int_{-\infty}^{\infty} f(s)f(s+h) \, ds\\
& = & -\frac{\alpha}{\alpha-2} \int_{-\infty}^{\infty} f(s)f(s+h) \, ds.
\end{eqnarray*}
Hence we conclude
\begin{eqnarray*}
&&\frac{N}{a_N^2} \Big(\hat{\gamma}_N(0)-\gamma(0),\ldots,\hat{\gamma}_N(H)-\gamma(H)\Big)\\
&\stackrel{d}{\rightarrow}& (\int_0^1 G_{0}(s) \, dK_s - \frac{\alpha}{\alpha-2}\int_{-\infty}^{\infty} f(s)f(s) \, ds,\ldots, \int_0^1 G_{H}(s) \, dK_s - \frac{\alpha}{\alpha-2}\int_{-\infty}^{\infty} f(s)f(s+H) \, ds).
\end{eqnarray*}
Note that $\int_{-\infty}^{\infty} f(s)f(s+h) \, ds = \int_{0}^{1} G_{h}(s) \, ds$.
Hence we finally conclude
\[
\frac{N}{a_N^2} \Big(\hat{\gamma}_N(0)-\gamma(0),\ldots,\hat{\gamma}_N(H)-\gamma(H)\Big) \stackrel{d}{\rightarrow} \Big(\int_0^1 G_{0}(s) \, dM_s,\ldots, \int_0^1 G_{H}(s) \, dM_s \Big)\mbox{ as } N \to \infty.
\]

We now consider the case $\frac{1}{\alpha} < d$: Since $\lim_{N \to \infty} \frac{N}{a_N^2}(\sigma^2 - b_N) =\frac{\alpha}{\alpha-2}$, we have \[\lim_{N \to \infty} N^{1-2d}(\sigma^2 - b_N) =0.\] Hence we can replace $\gamma(h)$ without loss of generality by $b_N (\int_{-\infty}^{\infty} f(s) f(s+h) \, ds)$ without changing the limit.
Hence for the one-dimensional result by Slutsky's Lemma together with Lemma \ref{lemmaforbillingley1}, we have to check that for all $\delta>0$
\[\lim_{N \to \infty} \PP[|N^{1-2d} ( \hat{\gamma}_N(h)-b_N (\int_{-\infty}^{\infty} f(s) f(s+h) \, ds)-(d_{N,h,\varepsilon}+r_{N,h,\varepsilon}))|>\delta]=0,\]
which is the statement of Lemma \ref{lemmaforbillingley2}.

\section{Remarks}

\begin{remark}
If we assume that $\E[L_1^4]= \eta \sigma^4<\infty$ and that $f(t)=0$ for $t \leq 0$, $f$ is bounded and $f(t) \sim C_d t^{d-1}$ as $t \to \infty$ with $d \in (0,\frac{1}{4})$ and $C_d>0$, then the conditions of Theorem 3.5 (a) in Cohen and Lindner \cite{CohenLindner} are fulfilled, i.e. the sample autocovariance is asymptotically normal distributed. More precisely,
\[\sqrt{N} (\hat{\gamma}_N(0)-\gamma(0),\ldots, \hat{\gamma}_N(H)-\gamma(H)) \stackrel{d}{\rightarrow} N(0,V),\]
where the covariance matrix $V=(v_{pq})_{p,q =0,\ldots,H}$ is given by
\[v_{pq}= (\eta-3)\sigma^4 \int_0^1 G_p (u) G_q(u) \, du + \sum_{k=-\infty}^{\infty} [\gamma(k)\gamma(k-p+q)+ \gamma(k+q)\gamma(k-p)]\]
and $G_p$ is as defined in Theorem \ref{asymptoticalstable}.
This corresponds to Theorem 3.5 (a) in \cite{HK}.
\end{remark}
\begin{proof}
We have to check the assumptions of Proposition 3.1 and Theorem  3.5 (a) in \cite{CohenLindner}. It is easy to see that $f \in L^2(\R) \cap L^4(\R)$, since $|f(t)|\leq K \max(1,t^{d-1})$ for a constant $K$. We next check that the function $[0,1] \rightarrow \R, u \mapsto \sum_{k=-\infty}^{\infty}  f(u+k)^2$ is in $L^2([0,1])$, which is in our notation the function $G_0$. Observe that $G_0$ is bounded by $K^2(1 + \sum_{k=1}^{\infty} k^{2d-2})<\infty$. Hence it even is in  $L^{\infty}([0,1])$.
We need not check (3.3) in \cite{CohenLindner} since (3.11) in \cite{CohenLindner} is stronger than (3.3).
Finally, we turn to (3.11) in \cite{CohenLindner}, i.e.
\begin{equation} \sum_{h=1}^{\infty} (\int_{-\infty}^{\infty} |f(s)||f(s+h)| \, ds)^2<\infty \label{conditionCL}.\end{equation}
By our assumptions on the kernel function, we obtain
\begin{eqnarray*}
\int_{-\infty}^{\infty} |f(s)||f(s+h)| \, ds \leq K^2 \int_0^{\infty} t^{d-1} (t+h)^{d-1} \, dt. \end{eqnarray*}
By substitution, we obtain for $h>0$
\[\int_0^{\infty} t^{d-1} (t+h)^{d-1} \, dt = h^{2d-1} \int_0^{\infty} s^{d-1} (s+1)^{d-1} \, ds.\]
Then (\ref{conditionCL}) is an immediate consequence of $d < \frac{1}{4}$ and the result follows from Theorem 3.5 in \cite{CohenLindner}.\end{proof}

\begin{remark}
If we assume that $(L_t)_{t \in \R}$ is a two-sided Brownian motion with $\E[L_1^2]=\sigma^2$ and that $f(t)=0$ for $t \leq 0$, $f$ is bounded and $f(t) \sim C_d t^{d-1}$ as $t \to \infty$ with $d =\frac{1}{4}$ and $C_d>0$, then 
\[\sqrt{\frac{N} {\log N}} (\hat{\gamma}_N(0)-\gamma(0),\ldots, \hat{\gamma}_N(H)-\gamma(H)) \stackrel{d}{\rightarrow} 2 C_d^2 \sigma^2 Z (1,\ldots,1)\mbox{ as } N \to \infty,\]
where $Z$ is a standard normal distributed random variable.
This follows from the proof of Theorem 4 (ii) in the article by  Hosking \cite{Hosking}. The proof shows the convergence by the method of cumulants and we have decided not to explore this case further for  non-Brownian Lévy processes with finite fourth moments. 
\end{remark}

The following definition of a FICARMA process goes back to Brockwell, see \cite{Brockwell} and \cite{BrockwellMarquardt}.
\begin{defi}
Let $a(z)=z^p+a_1 z^{p-1} + \cdots + a_p$ and $b(z)=b_0+b_1 z + \cdots b_q z^q$ be polynomials with real coefficients with $a_p \neq 0$, $b_q\neq 0$ and $q<p$. Let $d \in (0,\frac{1}{2})$. Let $(L_t)_{t \in \mathbb{R}}$ be a two-sided Lévy process with $\E[L_1]=0$ and $\Var(L_1)=\sigma^2 \in (0,\infty)$. If the roots of $a(z)$ all have negative real parts, then a FICARMA($p$,$d$,$q$) process $(X_t)_{t \in \R}$ is defined by $X_t := \int_{-\infty}^{\infty} f(t-s) \, dL_s$, where the kernel function $f$ is defined by $f(t)=0$ for $t \leq 0$ and
\[f(t)= \frac{1}{2\pi} \int_{-\infty}^{\infty} e^{i t \lambda} (i \lambda)^{-d} \frac{b(i\lambda)}{a(i \lambda)} \, d \lambda \quad \mbox{for } t >0.\] 
\end{defi}
The next proposition shows that the kernel function of a FICARMA process fulfils the assumption on $f$ in the assumptions of Theorem \ref{Theoremfourthmoment} and Theorem \ref{asymptoticalstable} if $b_0 \neq 0$. Hence depending on the Lévy process, one of these theorems can be applied.
\begin{propo}
The kernel function of a FICARMA process fulfils $f(t) \sim \frac{t^{d-1}}{\Gamma(d)}\cdot \frac{b(0)}{a(0)}$ as $t \to \infty$. Further, $f$ is infinitely often differentiable  on $\R^+$ and $\lim_{t \searrow 0} f(t)=0$, hence $f$ is bounded.
\end{propo}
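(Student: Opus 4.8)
The plan is to represent $f$ as the Weyl fractional integral of order $d$ of an ordinary CARMA kernel and then to read off all claims from the well-known behaviour of the latter. Write $f_0(t):=\frac{1}{2\pi}\int_{-\infty}^{\infty}e^{it\lambda}\tfrac{b(i\lambda)}{a(i\lambda)}\,d\lambda$ for the CARMA$(p,q)$ kernel with polynomials $a,b$; equivalently $f_0(t)=\sum_{j}\operatorname{Res}_{z=r_j}\bigl(e^{zt}\tfrac{b(z)}{a(z)}\bigr)$ for $t>0$ and $f_0(t)=0$ for $t\le 0$, where $r_j$ are the roots of $a$. By Brockwell's theory (\cite{Brockwell}, \cite{BrockwellMarquardt}): $f_0$ is a finite linear combination of terms $e^{r_jt}\times(\text{polynomial in }t)$ on $(0,\infty)$, hence $C^\infty$ there; since $\operatorname{Re}r_j<0$ there are $C,\delta>0$ with $|f_0(t)|\le Ce^{-\delta t}$; $f_0$ is bounded on $\R^+$ (at worst a jump at $0$, occurring exactly when $p-q=1$), so $f_0\in L^1(\R)\cap L^2(\R)$ with $\widehat{f_0}(\lambda)=\tfrac{b(i\lambda)}{a(i\lambda)}$, and in particular $\int_{-\infty}^{\infty}f_0(t)\,dt=\widehat{f_0}(0)=\tfrac{b(0)}{a(0)}$ (which is nonzero, as $b(0)=b_0\neq0$, $a(0)=a_p\neq0$). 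Since $d\in(0,\tfrac12)$ the symbol $(i\lambda)^{-d}\tfrac{b(i\lambda)}{a(i\lambda)}$ is in $L^1(\R)$ (it is $O(|\lambda|^{-d})$ near $0$, integrable because $d<1$, and $O(|\lambda|^{-1-d})$ at infinity because $p-q\ge1$) and in $L^2(\R)$ (its square is $O(|\lambda|^{-2d})$ near $0$, integrable because $d<\tfrac12$), so the integral defining $f$ converges absolutely. As the symbol equals $(i\lambda)^{-d}\widehat{f_0}(\lambda)$ and $(i\lambda)^{-d}$ is the Fourier transform of the Weyl kernel $k_d(u):=\tfrac{u_+^{d-1}}{\Gamma(d)}$ (from $\int_0^\infty e^{-st}t^{d-1}\,dt=\Gamma(d)s^{-d}$ as $\operatorname{Re}s\downarrow0$), the convolution theorem — or the direct computation in \cite{BrockwellMarquardt} — gives $f=k_d*f_0$, i.e.
\[ f(t)=\frac{1}{\Gamma(d)}\int_0^t(t-u)^{d-1}f_0(u)\,du,\qquad t>0,\]
and $f(t)=0$ for $t\le0$.

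From this $\lim_{t\searrow0}f(t)=0$ is immediate, since $|f(t)|\le\frac{1}{\Gamma(d)}\bigl(\sup_{s>0}|f_0(s)|\bigr)\int_0^tu^{d-1}\,du=O(t^{d})$. For $f\in C^\infty(\R^+)$ I would fix $0<t_1<t_2$ and for $t\in(t_1,t_2)$ split $\int_0^t=\int_0^{t_1/2}+\int_{t_1/2}^t$. On the first piece $(t-u)^{d-1}$ and all its $t$-derivatives are bounded uniformly (the singularity $t=u$ is avoided) and $f_0\in L^1[0,t_1/2]$, so one differentiates under the integral; on the second piece the substitution $u\mapsto t-w$ gives $\int_0^{t-t_1/2}w^{d-1}f_0(t-w)\,dw$, where $w^{d-1}$ is the only, integrable, singularity while $f_0(t-w)$ and all its $t$-derivatives stay bounded because $t-w\in[t_1/2,t_2]$ avoids $0$; differentiating this variable-limit integral gives the $C^\infty$ boundary term $f_0(t_1/2)(t-t_1/2)^{d-1}$ plus an integral of the same shape, so induction yields $f\in C^\infty((t_1,t_2))$, hence on all of $\R^+$.

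For the tail, substituting $u\mapsto t-w$ again gives $f(t)=\frac{t^{d-1}}{\Gamma(d)}\int_0^t(1-\tfrac wt)^{d-1}f_0(w)\,dw$; splitting at $w=t/2$, on $[t/2,t]$ one has $|f_0(w)|\le Ce^{-\delta t/2}$ and $\int_{t/2}^t(1-w/t)^{d-1}\,dw=O(t)$, so that part $\to0$, while on $[0,t/2]$ one has $0\le(1-w/t)^{d-1}\le2^{1-d}$ and $(1-w/t)^{d-1}\to1$ pointwise, so dominated convergence with majorant $2^{1-d}|f_0|\in L^1$ gives $\int_0^{t/2}(1-w/t)^{d-1}f_0(w)\,dw\to\int_0^\infty f_0(w)\,dw=\tfrac{b(0)}{a(0)}$. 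Hence $f(t)=\tfrac{t^{d-1}}{\Gamma(d)}\bigl(\tfrac{b(0)}{a(0)}+o(1)\bigr)$, the asserted asymptotics. Boundedness then follows: on $\R^+$ the function $f$ is continuous, has limit $0$ at $0$ and is asymptotic to $\tfrac{b(0)}{a(0)\Gamma(d)}t^{d-1}\to0$ at infinity (here $d-1<0$), so it has finite limits at both endpoints; off $\R^+$ it vanishes. The one point requiring care is the identification $f=k_d*f_0$ in the first step — justifying the Fourier/convolution step when $k_d$ is only locally integrable and $(i\lambda)^{-d}$ is a homogeneous tempered distribution, and recalling the precise regularity and decay of the CARMA kernel $f_0$; the remaining steps are routine.
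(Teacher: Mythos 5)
Your proof is correct, but it takes a genuinely different route from the paper. The paper works on the complex-analytic side: it deforms the Fourier--Mellin inversion contour onto an angular path $\mathfrak{W}$ avoiding the zeros of $a$, obtains the asymptotics $f(t)\sim \frac{t^{d-1}}{\Gamma(d)}\frac{b(0)}{a(0)}$ by citing Brockwell's equation (4.6), which rests on Doetsch's Theorem 37.1, gets smoothness (indeed analyticity) from Doetsch's Theorem 36.1, and only invokes the Riemann--Liouville representation $f(t)=\int_0^t g(t-u)\frac{u^{d-1}}{\Gamma(d)}\,du$ for the single claim $\lim_{t\searrow 0}f(t)=0$. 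You instead make that representation, $f=k_d*f_0$ with $k_d(u)=u_+^{d-1}/\Gamma(d)$ and $f_0$ the exponentially decaying CARMA kernel, the foundation of the whole argument, and then everything -- the $O(t^d)$ bound at the origin, $C^\infty$ via splitting the convolution and differentiating under the integral with an induction on the boundary terms, and the tail asymptotics via dominated convergence against $\int_0^\infty f_0=\widehat{f_0}(0)=b(0)/a(0)$ -- is elementary real analysis. What your route buys is self-containedness: you actually prove the asymptotic relation rather than citing it, using only standard facts about CARMA kernels. What the paper's route buys is brevity given the references, plus the stronger conclusion that $f$ is analytic on $\R^+$, not merely $C^\infty$. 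The one point you rightly flag as delicate, the identification $f=k_d*f_0$ (your Fourier/convolution sketch with the tempered distribution $(i\lambda)^{-d}$ is the least rigorous step), is exactly Brockwell's equation (4.4), which the paper also cites; with that reference in hand your argument is complete. A cosmetic remark: the asymptotic equivalence is only meaningful when $b(0)=b_0\neq 0$, which the definition of a FICARMA process does not guarantee; the paper makes the same implicit assumption, stating it only in the sentence preceding the proposition.
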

\begin{proof}
Brockwell \cite{Brockwell} shows that $f(t) \sim \frac{t^{d-1}}{\Gamma(d)}\cdot \frac{b(0)}{a(0)}$ by refering to Theorem 37.1 on page 254 in \cite{Doetsch} and states this as equation (4.6) in \cite{Brockwell}. He rewrites the kernel function as $\frac{1}{2\pi i} \int_{-i\infty}^{i \infty} e^{t z} z^{-d} \frac{b(z)}{a(z)} \, d z$. By the discussion on page 250 in \cite{Doetsch}, the path through the origin can be replaced by a path $\mathfrak{W}$ with angle $\psi$ as in figure 37 on page 250 in \cite{Doetsch}. Since $a(z)$ has only finitely many roots, we find $\psi>\frac{\pi}{2}$ small enough such that all singularities lie on the left-hand side of $\mathfrak{W}$. We have $\lim_{|z| \to \infty} z^{-d} \frac{b(z)}{a(z)} =0$ since $|z^{-d}| = |z|^{-d}$ and $q < p$. Thus the conditions of Theorem 37.1 in \cite{Doetsch} are fulfilled. To show that it is differentiable, we now assume without loss of generality that we use the path $\mathfrak{W}$, hence
\[f(t)=\frac{1}{2\pi i} \int_{\mathfrak{W}} e^{t z} z^{-d} \frac{b(z)}{a(z)} \, d z.\]
By Theorem 36.1 in \cite{Doetsch}, $f$ is analytic on the right-hand side of $\mathfrak{W}$ with derivative
\[f'(t)=\frac{1}{2\pi i} \int_{\mathfrak{W}} e^{t z} z^{-d+1}\frac{b(z)}{a(z)} \, d z.\]
Note that the choice $\mathfrak{W}$ depends on $t$.

Since $f$ is analytic on the right-hand side of $\mathfrak{W}$, it is especially infinitely often differentiable on the positive real line. The kernel function can be equivalently expressed as $f(t)=\int_0^t g(t-u) \frac{u^{d-1}}{\Gamma(d)} \, du$, see \cite{Brockwell} equation (4.4), which is the Riemann-Liouville fractional integral of the kernel $g$ of a CARMA process with polynomials $a(z)$ and $b(z)$. Hence by equation (2.5) on page 46 in \cite{Miller}, $\lim_{t \searrow 0} f(t)= 0$.
\end{proof}

Theorem \ref{Theoremfourthmoment} and Theorem \ref{asymptoticalstable} gave limit theorems for the sample autocovariance function of $(X_t)_{t \in \mathbb{Z}}$.  Using the delta-method, it is easy to obtain limit theorems for the sample autocorrelation defined by $\hat{\rho}_N(h):=\frac{\hat{\gamma}_N(h)}{\hat{\gamma}_N(0)}$ for $h \in \mathbb{N}_0$. The autocorrelation function is denoted by $\rho(h):=\frac{\gamma(h)}{\gamma(0)}$ for $h \in \mathbb{N}_0$.
\begin{coro}
If
\[N^{1-2d} (\hat{\gamma}_N(0)-\gamma(0),\ldots, \hat{\gamma}_N(H)-\gamma(H)) \stackrel{d}{\rightarrow} C_d^2 \sigma^2 U_d(1) (1,\ldots,1)\mbox{ as } N \to \infty,\]
where $U_d(1)$ is the marginal distribution of the Rosenblatt process at time $1$ defined in (\ref{Rosenblattprocess}),
then
\[N^{1-2d}(\hat{\rho}_N(h)-\rho(h))\stackrel{d}{\rightarrow} C_d^2 \sigma^2 U_d(1) \frac{1-\rho(h)}{\gamma(0)}\mbox{ as } N \to \infty.\]
If
\[ \frac{N}{a_N^2} (\hat{\gamma}_N(0)-\gamma(0),\ldots,\hat{\gamma}_N(H)-\gamma(H)) \stackrel{d}{\rightarrow} (\int_0^1 G_{0}(s) \, dM_s,\ldots, \int_0^1 G_{H}(s) \, dM_s) \mbox{ as } N \to \infty,
\]
then

\[\frac{N}{a_N^2}(\hat{\rho}_N(h)-\rho(h)) \stackrel{d}{\rightarrow} \frac{1}{\gamma(0)}\int_0^1 (G_{h}(s)-\rho(h) G_{0}(s))  \, dM_s \mbox{ as } N \to \infty.
\]

\end{coro}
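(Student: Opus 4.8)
The plan is to obtain both statements from a single application of the delta method to the map $g(x,y):=x/y$, which is continuously differentiable in a neighbourhood of $(\gamma(h),\gamma(0))$ because $\gamma(0)=\sigma^2\int_{-\infty}^{\infty}f(s)^2\,ds>0$ under our standing assumptions on $f$. The starting point is the algebraic identity
\[
\hat{\rho}_N(h)-\rho(h)=\frac{\hat{\gamma}_N(h)}{\hat{\gamma}_N(0)}-\frac{\gamma(h)}{\gamma(0)}=\frac{\hat{\gamma}_N(h)-\gamma(h)}{\hat{\gamma}_N(0)}-\frac{\gamma(h)}{\gamma(0)}\cdot\frac{\hat{\gamma}_N(0)-\gamma(0)}{\hat{\gamma}_N(0)},
\]
so that, multiplying by the relevant normalising sequence $a_N$ (namely $N^{1-2d}$ in the first case and $N/a_N^2$ in the second),
\[
a_N\bigl(\hat{\rho}_N(h)-\rho(h)\bigr)=\frac{a_N\bigl(\hat{\gamma}_N(h)-\gamma(h)\bigr)}{\hat{\gamma}_N(0)}-\frac{\gamma(h)}{\gamma(0)}\cdot\frac{a_N\bigl(\hat{\gamma}_N(0)-\gamma(0)\bigr)}{\hat{\gamma}_N(0)}.
\]

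First I would note that $\hat{\gamma}_N(0)\to\gamma(0)$ in probability. This is immediate from the hypothesis: the normalising sequences diverge, since $N^{1-2d}\to\infty$ as $d<\tfrac12$, and $N/a_N^2\to\infty$ because $a_N^2$ is regularly varying of index $2/\alpha$ with $\alpha\in(2,4)$, hence $a_N^2=o(N)$; in either case $a_N(\hat{\gamma}_N(0)-\gamma(0))$ converging in distribution forces $\hat{\gamma}_N(0)-\gamma(0)\to0$ in probability, whence $1/\hat{\gamma}_N(0)\to1/\gamma(0)$ in probability. Since by hypothesis the whole vector $a_N(\hat{\gamma}_N(0)-\gamma(0),\ldots,\hat{\gamma}_N(H)-\gamma(H))$ converges jointly in distribution — this joint convergence is exactly what Theorems \ref{Theoremfourthmoment} and \ref{asymptoticalstable} provide — Slutsky's lemma applied to the displayed identity yields
\[
a_N\bigl(\hat{\rho}_N(h)-\rho(h)\bigr)\stackrel{d}{\rightarrow}\frac{L_h}{\gamma(0)}-\frac{\gamma(h)}{\gamma(0)^2}\,L_0,
\]
where $(L_0,\ldots,L_H)$ is the limiting vector of $a_N(\hat{\gamma}_N(\cdot)-\gamma(\cdot))$.

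It then remains to substitute the two explicit limits and simplify, using $\rho(h)=\gamma(h)/\gamma(0)$. In the Rosenblatt case $L_h=C_d^2\sigma^2U_d(1)$ for every $h$, so the right-hand side equals $C_d^2\sigma^2U_d(1)\bigl(\tfrac{1}{\gamma(0)}-\tfrac{\gamma(h)}{\gamma(0)^2}\bigr)=C_d^2\sigma^2U_d(1)\tfrac{1-\rho(h)}{\gamma(0)}$. In the stable case $L_h=\int_0^1G_h(s)\,dM_s$, and exploiting linearity of the stochastic integral with respect to $M$ together with $\gamma(h)/\gamma(0)^2=\rho(h)/\gamma(0)$, the limit rewrites as $\tfrac{1}{\gamma(0)}\int_0^1\bigl(G_h(s)-\rho(h)G_0(s)\bigr)\,dM_s$, which is the claimed expression.

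There is no genuine obstacle here; the argument is a routine delta-method computation and the passage through the algebraic identity makes it insensitive to the fact that in the Rosenblatt case the limiting vector is degenerate (concentrated on a line). The only points needing a word of care are the consistency $\hat{\gamma}_N(0)\to\gamma(0)$ in probability, handled above via the divergence of the normalising sequences, and the need to invoke the \emph{joint} distributional convergence of the sample autocovariance vector rather than the marginal convergences — both cited theorems are stated jointly, so this is available. For the stable case one additionally uses that a finite linear combination of the integrals $\int_0^1G_h(s)\,dM_s$ is itself a stochastic integral against $M$, which is immediate from the definition of the integral by convergence in probability.
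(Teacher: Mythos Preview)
Your argument is correct and is precisely the delta-method computation the paper invokes (with $\varphi(x,y)=y/x$), just written out in full via the algebraic identity and Slutsky's lemma. One minor notational quibble: you reuse $a_N$ for the generic normalising sequence, which clashes with the paper's $a_N:=\inf\{y:\PP[|L_1|>y]<1/N\}$; pick a different symbol to avoid confusion.
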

\begin{proof}
This follows from the delta-method, see Theorem 3.1.  in \cite{vanderVaart}, with the function $\varphi(x,y)=\frac{y}{x}$.
\end{proof}

We conclude this section by considering fractional Lévy noise as in \cite{CohenLindner}.
One way of defining a fractional Lévy process $(M_t)_{t \in \R}$ is to set
\[M_t:=  \frac{1}{\Gamma(d+1)} \int_{- \infty}^{\infty} ((t-s)_+^d-(-s)_+^d) \, dL_s, \quad t \in \R,\]
where $d \in (0,\frac{1}{2})$, $(L_t)_{t \in \R}$ is a two-sided Lévy process with $\E[L_1]=0$ and $\Var(L_1)<\infty$, see \cite{Marquardt}. One obtains fractional Lévy noise $(Z_t)_{t \in \mathbb{Z}}$ by defining
\[Z_t:= M_t-M_{t-1}, \quad t \in \mathbb{Z}.\]
Hence $Z_t= \frac{1}{\Gamma(d+1)} \int_{- \infty}^{\infty} ((t-s)_+^d-(t-1-s)_+^d) \, dL_s$. By the mean value theorem, one obtains that $\frac{1}{\Gamma(d+1)}(t_+^d-(t-1)_+^d) \sim \frac{d}{\Gamma(d+1)}t^{d-1}$ as $t \to \infty$.
Cohen and Lindner derive that
\[\hat{d}:= \frac{1}{2} \frac{\log (\hat{\rho}_N(1)+1)}{\log 2}\]
is a strongly consistent estimator for $d$, see (4.2) in \cite{CohenLindner}, where $\hat{\rho}$ is the sample autocorrelation of the fractional Lévy noise. Proposition 4.1 in \cite{CohenLindner} states that $\hat{d}$ is asymptotically normal distributed, if $d \in (0,\frac{1}{4})$ and $\E[L_1^4]<\infty$. By the delta-method and the last corollary, we can see that for example in the case $d \in (\frac{1}{4},\frac{1}{2})$ and $\E[L_1^4]<\infty$, $\hat{d}$ is asymptotically Rosenblatt distributed. This complements the results in \cite{CohenLindner}.

\section*{Acknowledgement}
I am grateful for the support by Deutsche Forschungsgemeinschaft Grant LI 1026/4-2.


\begin{thebibliography}{99}
\bibitem{Billingsley} P. Billingsley \textsl{Convergence of Probability Measures}, John Wiley, 1999
\bibitem{Bingham} N. H. Bingham, C. M. Goldie, J. L. Teugels \textsl{Regular Variation}, Cambridge University Press, 1987
\bibitem{Brockwell} P. J. Brockwell Representations of continuous-time ARMA processes, J. Appl. Probab. \textbf{41A}, 375-382, 2004
\bibitem{BrockwellDavis} P. J. Brockwell, R. A. Davis \textsl{Time Series: Theory and Methods} Second Edition Springer-Verlag, New York 2006
\bibitem{BrockwellMarquardt} P. J. Brockwell, T. Marquardt Lévy-driven and fractionally integrated ARMA processes with continuous time parameter, Statistica Sinica \textbf{15}(2005), 477-494
\bibitem{CohenLindner} S. Cohen, A. Lindner A central limit theorem for the sample autocorrelations of a Lévy driven continuous time moving average process, Journal of Statistical Planning and Inference \text{143}, 1295-1306, 2013
\bibitem{DavisResnick1985} R. Davis, S. Resnick Limit Theory for Moving Averages of Random Variables with Regularly Varying Tail Probabilities, The Annals of Probability, \textbf{13},(1) 179-195, 1985
\bibitem{Doetsch} G. Doetsch \textsl{Introduction to the Theory and Application of the Laplace Transform}, Springer Verlag, Berlin 1974
\bibitem{Embrechts}  P.Embrechts, C. Goldie, N. Veraverbeeke \textsl{Subexponentiality and infinite divisibility}, Z. Wahrschein. Verw. Geb. \textbf{49}, 335-347
\bibitem{EKM} P. Embrechts, C. Klüppelberg, T. Mikosch \textsl{Modelling Extremal Events}, Springer Verlag, Springer Verlag, Berlin 1997
\bibitem{Fasen} V. Fasen \textsl{Extremes of Lévy Driven Moving Average Processes with Applications in Finance}, PhD thesis, TU München, 2004
\bibitem{GKS} L. Giraitis, H. L. Koul, D. Surgailis \textsl{Large Sampe Inference for Long Memory Processes} Imperial College Press, London 2012
\bibitem{HK} L. Horváth, P. Kokoszka Sample autocovariances of long-memory time series Bernoulli \textbf{2}, 2008, 405-418
\bibitem{Hosking} J. R. M. Hosking Asymptotic distributions of the sample mean, autocovariances and autocorrelations of long-memory time series, Journal of Econometrics \textbf{73}, 261-268, 1996
\bibitem{HultLindskog} H. Hult, F. Lindskog On regular variation for infinitely divisible random vectors and additive processes, Adv. Appl. Probl. \textbf{38}, 134-148, 2006
\bibitem{Jessen} A. H. Jessen, T. Mikosch \textsl{Regularly Varying Functions} Publications de l'institut mathématique  Nouvelle série, tome 80(94) 171-192 2006
\bibitem{Marquardt} T. Marquardt \textsl{Fractional Lévy processes with an application to long memory moving average processes}, Bernoulli Volume 12, Number 6, 1099-1126, 2006
\bibitem{Miller} K. S. Miller, B. Ross \textsl{An Introduction to the Fractional Calculus and Fractional Differential Equations}, John Wiley, New York 1993
\bibitem{Resnick} S. Resnick \textsl{Heavy-Tail Phenomena}, Springer 2006
\bibitem{Samo} G. Samorodnitsky, M. Taqqu \textsl{Stable Non-Gaussian Random Processes}, Chapman \& Hall, 1994
\bibitem{Sato} K. Sato, \textsl{Lévy Processes and Infinitely Divisible Distributions}, Cambridge Studies in Advanced Mathematics, 1999
\bibitem{vanderVaart} A. W. van der Vaart \textsl{Asymptotic Statistics}, Cambridge Press, 1998
\end{thebibliography}
\end{document}